\newtheorem{theorem}{Theorem}[section]
\newtheorem{proposition}[theorem]{Proposition}
\newtheorem{lemma}[theorem]{Lemma}
\newtheorem{corollary}[theorem]{Corollary}
\theoremstyle{definition}
\theoremstyle{remark}
\numberwithin{equation}{section}
\begin{document}

\title [{{The Zariski topology on the secondary-like spectrum o}}]{The Zariski topology on the secondary like spectrum of a module}

 \author[{{S. Salam and K. Al-Zoubi }}]{Saif Salam and \textit{Khaldoun Al-Zoubi}* }
\address
{\textit{Saif Salam, Department of Mathematics and
Statistics, Jordan University of Science and Technology, P.O.Box
3030, Irbid 22110, Jordan.}}
\bigskip
{\email{\textit{smsalam19@sci.just.edu.jo}}}

\address
{\textit{ Khaldoun Al-Zoubi , Department of Mathematics and
Statistics, Jordan University of Science and Technology, P.O.Box
3030, Irbid 22110, Jordan.}}
\bigskip
{\email{\textit{kfzoubi@just.edu.jo}}}

 \subjclass[2020]{13C13, 13C99, 54B99 }

\date{}
\begin{abstract}
  Let $R$ be a commutative ring with unity and $M$ be a left $R$-module. We define the secondary-like spectrum of $M$ to be the set of all secondary submodules $K$ of $M$ such that $Ann_R(soc(K))=\sqrt{Ann_R(K)}$, and we denote it by $Spec^L(M)$. In this paper, we introduce a topology on $Spec^L(M)$ having the Zariski topology on the second spectrum $Spec^s(M)$ as a subspace topology, and study several topological structures of this topology.
 \end{abstract}

\keywords{secondary-like spectrum, Zariski topology, second spectrum. \\
$*$ Corresponding author}
 \maketitle


 \section{Introduction and Preliminaries }
Throughout this paper all rings are commutative with identity and all modules are left unital modules. Let $M$ be an $R$-module. By $I\lhd R$ (resp. $N\leq M$) we mean that $I$ is an ideal of $R$ (resp. $N$ is a submodule of $M$). Let $M$ be an $R$-module, $N\leq M$ and $I\lhd R$. The annihilator of $N$ in $R$ (resp. the annihilator of $I$ in $M$) will be expressed by $Ann_R(N)=\{r\in R\,\mid\,rN=\{0\}\}$ (resp. $Ann_M(I)=\{m\in M\,\mid\, Im=\{0\}\}$). For a ring $R$, the set of all prime ideals of $R$ is denoted by $Spec(R)$ and it is called the prime spectrum of $R$. For any ideal $I$ of $R$, define $V^R(I)$ as the set of all prime ideals containing $I$, i.e. $V^R(I)=\{p\in Spec(R)\,\mid\, p\supseteq I\}$. Then there exists a topology on $Spec(R)$ having $\{V^R(I)\,\mid\, I\lhd R\}$ as the collection of closed sets. This topology is known as the Zariski topology on $Spec(R)$ (see, for example, \cite{atiyah1969introduction,bourbakialgebre}).

Let $M$ be an $R$-module. A submodule $S$ of $M$ is called second if $S\neq\{0\}$ and for every $r\in R$, we have $rS=S$ or $rS=0$. It is clear that $Ann_R(S)\in Spec(R)$ for any second submodule $S$ of $M$. The second spectrum of $M$, given by $Spec^s(M)$, is the set of all second submodules of $M$. If $Spec^s(M)=\emptyset$, then we say that $M$ is a secondless $R$-module. The socle of a submodule $N$ of $M$, expressed by $soc(N)$, is the sum of all second submodules of $M$. If there is no second submodules contained in $N$, then $soc(N)$ is defined to be $\{0\}$. The second submodules and the socle of submodules have been studied by many authors (see, for example, \cite{ansari2011dual,cceken2013dual,ansari2012dual,yassemi2001dual,ansari2013dual,ansari}). A submodule $K$ of $M$ is said to be secondary if $K\neq\{0\}$ and for every $r\in R$, we have $rK=K$ or there exists a positive integer $n$ such that $r^n K=\{0\}$. For any ideal $I$ of $R$, we denote the radical of $I$ by $\sqrt{I}$. It is easily seen that, if $K$ is a secondary submodule of $M$, then $Ann_R(K)$ is a primary ideal of $R$ and hence $\sqrt{Ann_R(K)}\in Spec(R)$. For more details concerning the secondary submodules, one can look in \cite{ansari2012secondary,ghaleb2021spectrum,ansari20162}.

Let $M$ be an $R$-module and let $\zeta^{s*}(M)=\{V^{s*}(N)\,\mid\,N\leq M\}$, where $V^{s*}(N)=\{S\in Spec^s(M)\,\mid\, N\supseteq S\}$ for any $N\leq M$. We say that $M$ is a cotop module if $\zeta^{s*}(M)$ is closed under finite union and in this case, $\zeta^{s*}(M)$ satisfies the axioms for closed sets of a topology on $Spec^{s}(M)$. The generated topology is called the quasi Zariski topology on $Spec^s(M)$. Unlike $\zeta^{s*}(M)$, $\zeta^{s}(M)=\{V^s(N)\,\mid\, N\leq M\}$ where $V^s(N)=\{S\in Spec^s(M)\,\mid\, Ann_R(S)\supseteq Ann_R(N)\}$ for any $N\leq M$ always satisfies all of the topology axioms for the closed sets. The resulting topology is called the Zariski topology on $Spec^s(M)$. When $Spec^s(M)\neq \emptyset$, the map $\psi:Spec^s(M)\rightarrow Spec(R/Ann_R(M))$ defined by $S\mapsto Ann_R(S)/Ann_R(M)$ for every $S\in Spec^s(M)$ is called the natural map of $Spec^s(M)$. For more information about the topologies on $Spec^s(M)$ and the natural map of $Spec^s(M)$, see \cite{ansari2014zariski,farshadifar2013modules,abuhlail2011dual,abuhlail2015zariski}. In addition, in \cite{salam2022zariski,salam2022graded}, the Zariski topology on the graded second spectrum of graded modules have been studied.

In this work, we call the set of all secondary submodules $K$ of an $R$-module $M$ satisfying the condition $Ann_R(soc(K))=\sqrt{Ann_R(K)}$ the secondary-like spectrum of $M$ and denote it by $Spec^L(M)$. It is clear that $Spec^s(M)\subseteq Spec^L(M)$. But the converse inclusion is not true in general. For example, if $M$ is a non-zero vector space over a field $F$, then $Spec^s(M)=Spec^L(M)=\{N\leq M\,\mid\, N\neq\{0\}\}$. On the other hand, if we take $\mathbb{Z}_8$ as $\mathbb{Z}_8$-module, then it is easy to see that the submodule $K=\{0, 2, 4, 6\}\in Spec^L(\mathbb{Z}_8)$. However, $K=\{0, 2, 4, 6\}\notin Spec^s(\mathbb{Z}_8)$ as $2K=\{0, 4\}\neq N$ and $2K\neq\{0\}$. Also, if we take $\mathbb{Z}$ as $\mathbb{Z}$-module, then $Spec^s(\mathbb{Z})=Spec^L(\mathbb{Z})=\emptyset$. For an $R$-module $M$, it is obvious that $soc(K)\neq \{0\}$ for any $K\in Spec^L(M)$ as $Ann_R(soc(K))=\sqrt{Ann_R(K)}\in Spec(R)$.

We start this work by introducing the notion of the secondary cotop module which is a generalization of the cotop module. For this, we define the variety of any submodule $N$ of an $R$-module $M$ by $\nu^{s*}(N)=\{K\in Spec^L(M)\,\mid\, soc(K)\subseteq N\}$ and we set $\Theta^{s*}(M)=\{\nu^{s*}(N)\,\mid\, N\leq M\}$. Then we say that $M$ is a secondary cotop module if $\Theta^{s*}(M)$ is closed under finite union. When this is the case, we see that $\Theta^{s*}(M)$ induces a topology and this topology is called the quasi-Zariski topology on $Spec^L(M)$ (Theorem \ref{Theorem 2.1}). Next, we introduce another variety for any $N\leq M$ by $\nu^{s}(N)=\{K\in Spec^L(M)\,\mid\, Ann_R(N)\subseteq \sqrt{Ann_R(K)}\}$ and prove that there always exists a topology on $Spec^L(M)$ having $\Theta^{s}(M)=\{\nu^{s}(N)\,\mid\, N\leq M\}$ as the collection of closed sets. We call this topology the Zariski topology on $Spec^L(M)$ or simply $\mathcal{SL}$-topology (Theorem \ref{Theorem 2.3}). We provide some relationships between the varieties $\nu^{s*}(N), \nu^{s}(N), V^{s*}(N)$ and $V^{s}(N)$ for any submodule $N$ of an $R$-module $M$ and conclude that every secondary cotop module is a cotop module (Lemma \ref{Lemma 2.4} and Corollary \ref{Corollary 2.5}). In addition, using these relations, we see that the Zariski topology on $Spec^s(M)$ for an $R$-module $M$ is a topological subspace of $Spec^L(M)$ equipped with the $\mathcal{SL}$-topology. Next, for an $R$-module $M$, we introduce the map $\varphi:Spec^L(M)\rightarrow Spec(R/Ann_R(M))$ by $\varphi(K)=\sqrt{Ann_R(K)}/Ann_R(M)$ to obtain some relations between the properties of $Spec^L(M)$ and $Spec(R/Ann_R(M))$. For example, we relate the connectedness of $Spec^L(M)$ with the connecteness of both $Spec(R/Ann_R(M))$ and $Spec^s(M)$ using $\varphi$ and $\psi$ (Theorem \ref{theorem 2.12}). In section 2, among other results, we show that the secondary-like spectrums of two modules are homeomorphic if there exists a module isomorphism between these modules (Corollary \ref{corollary 2.15}). In section 3, we introduce a base for the Zariski topology on $Spec^L(M)$ (Theorem \ref{theorem 3.1}). Also, we prove that the surjectivity of $\varphi$ implies that the basic open sets are quasi-compact and conclude that $Spec^L(M)$ is quasi-compact (Theorem \ref{Theorem 3.4}). In section 4, we investigate the irreducibility of $Spec^L(M)$ with respect to the $\mathcal{SL}$-topology. In particular, it is shown that if $\varphi$ is surjective, then every irreducible closed subset of $Spec^L(M)$ has a generic point and we determine exactly the set of all irreducible components of $Spec^L(M)$ (Theorem \ref{theorem 4.5} and Corollary \ref{corollary 4.8}). Furthermore, we study $Spec^L(M)$ from the viewpoint of being $T_0$-space, $T_1$-space and spectral space.
\section{The Zariski topology on $Spec^L(M)$}
In this section, we define two varieties of submodules and we use their properties to construct the quasi-Zariski topology and the $\mathcal{SL}$-topology on $Spec^L(M)$. In addition, we introduce some relationships between $Spec^L(M)$, $Spec(R/Ann_R(M))$ and $Spec^s(M)$.
\begin{theorem}\label{Theorem 2.1}
Let $M$ be an $R$-module. For any submodule $N$ of $M$, we define the variety of $N$ by $\nu^{s*}(N)=\{K\in Spec^L(M)\,\mid\, soc(K)\subseteq N\}$. Then we have the following:
\begin{enumerate}
\item $\nu^{s*}(M)=Spec^L(M)$ and $\nu^{s*}(0)=\emptyset$.
\item $\underset{j\in \Delta}{\bigcap} \nu^{s*}(N_j)=\nu^{s*}(\underset{j\in \Delta}{\bigcap}N_j)$ for any family of submodules $\{N_j\}_{j\in\Delta}$ and any index set $\Delta$.
\item $\nu^{s*}(N)\cup \nu^{s*}(L)\subseteq \nu^{s*}(N+L)$ for any $N, L\leq M$.
\item If $N_1, N_2\leq M$ with $N_1\subseteq N_2$, then $\nu^{s*}(N_1)\subseteq \nu^{s*}(N_2)$.
\item $\nu^{s*}(soc(N))=\nu^{s*}(N)$ for any submodule $N$ of $M$.
\end{enumerate}
\end{theorem}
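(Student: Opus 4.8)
The plan is to reduce all five statements to elementary manipulations of the socle operator, so I would begin by recording the three structural facts about $soc$ on which everything rests. First, by definition $soc(N)$ is the sum of all second submodules of $M$ contained in $N$, so $soc(N)\subseteq N$ always holds, and $soc$ is monotone: $N_1\subseteq N_2$ forces $soc(N_1)\subseteq soc(N_2)$, since every second submodule contained in $N_1$ is contained in $N_2$. Second, $soc$ is idempotent, i.e.\ $soc(soc(N))=soc(N)$; indeed $soc(soc(N))\subseteq soc(N)$ by the containment just noted, while every second submodule $S\subseteq N$ satisfies $S\subseteq soc(N)$ and hence contributes to $soc(soc(N))$, giving the reverse inclusion. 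Third, I would invoke the observation already recorded in the introduction that $soc(K)\neq\{0\}$ for every $K\in Spec^L(M)$, which follows from $Ann_R(soc(K))=\sqrt{Ann_R(K)}\in Spec(R)$.

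With these in hand, parts (1)--(4) are immediate. For (1), every $K\in Spec^L(M)$ has $soc(K)\subseteq M$, so $\nu^{s*}(M)=Spec^L(M)$; and $soc(K)\subseteq\{0\}$ is impossible by the third fact above, so $\nu^{s*}(0)=\emptyset$. For (2), a module $K$ lies in $\bigcap_{j}\nu^{s*}(N_j)$ exactly when $soc(K)\subseteq N_j$ for every $j$, which is equivalent to $soc(K)\subseteq\bigcap_{j}N_j$, i.e.\ $K\in\nu^{s*}(\bigcap_{j}N_j)$. Part (4) follows because $soc(K)\subseteq N_1\subseteq N_2$ whenever $K\in\nu^{s*}(N_1)$ and $N_1\subseteq N_2$, and (3) is then an instant corollary of (4) applied to $N\subseteq N+L$ and $L\subseteq N+L$.

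The only step requiring the idempotence fact is (5). Since $soc(N)\subseteq N$, part (4) gives $\nu^{s*}(soc(N))\subseteq\nu^{s*}(N)$. For the reverse inclusion, take $K\in\nu^{s*}(N)$, so that $soc(K)\subseteq N$; applying monotonicity of $soc$ followed by idempotence yields $soc(K)=soc(soc(K))\subseteq soc(N)$, whence $K\in\nu^{s*}(soc(N))$. I expect this idempotence-driven inclusion to be the only point where a genuine, if short, argument beyond set-theoretic bookkeeping is needed; everything else is routine once the preliminary properties of $soc$ are in place.
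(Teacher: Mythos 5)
Your proposal is correct, and it matches what the paper intends: the paper dismisses Theorem \ref{Theorem 2.1} with ``The proof is obvious,'' and your argument is exactly the routine one being alluded to, with the only two points of substance being the fact (recorded in the paper's introduction) that $soc(K)\neq\{0\}$ for every $K\in Spec^L(M)$, which you correctly use for $\nu^{s*}(0)=\emptyset$, and the monotonicity-plus-idempotence of $soc$, which you correctly isolate as the one genuine step, needed for part (5). Nothing is missing; your write-up simply supplies the details the paper omits.
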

\begin{proof}
The proof is obvious.
\end{proof}
The reverse inclusion in Theorem \ref{Theorem 2.1}(3) is not true in general. For example, take $M=\mathbb{Z}_2\times\mathbb{Z}_2$ as $\mathbb{Z}$-module and let $N=\mathbb{Z}_2\times\{0\}$ and $L=\{0\}\times\mathbb{Z}_2$. Then $\nu^{s*}(N+L)=\nu^{s*}(M)=Spec^L(M)$. Note that $M\in Spec^s(M)\subseteq Spec^L(M)$. It follows that $M\in \nu^{s*}(N+L)$ and $soc(M)=M$. But $M\nsubseteq N$ and $M\nsubseteq L$. Therefore, $M\notin \nu^{s*}(N)\cup \nu^{s*}(L)$.

Let $M$ be an $R$-module. Using Theorem \ref{Theorem 2.1}(1), (2) and (3), we can see that there exists a topology on $Spec^L(M)$ having $\Theta^{s*}(M)=\{\nu^{s*}(N)\,\mid\, N\leq M\}$ as the collection of closed sets if and only if $\Theta^{s*}(M)$ is closed under finite union. When this is the case, we call the resulting topology the quasi-Zariski topology on $Spec^L(M)$ and we call the module $M$ a secondary cotop module.

Recall that an $R$-module $M$ is said to be comultiplication if any submodule $N$ of $M$ has the form $Ann_M(I)$ for some ideal $I$ of $R$. By \cite[Lemma 3.7]{ansari2007dual}, if $N$ is a submodule of a comultiplication $R$-module $M$, then $N=Ann_M(Ann_R(N))$.
\begin{theorem}
Every comultiplication module is a secondary cotop module.
\end{theorem}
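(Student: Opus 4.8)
The plan is to show that for a comultiplication module $M$, the collection $\Theta^{s*}(M)$ is closed under finite union, which by the discussion following Theorem \ref{Theorem 2.1} is exactly the condition defining a secondary cotop module. Since $\nu^{s*}(N)\cup\nu^{s*}(L)\subseteq\nu^{s*}(N+L)$ always holds by Theorem \ref{Theorem 2.1}(3), the entire content of the proof is the reverse inclusion $\nu^{s*}(N+L)\subseteq\nu^{s*}(N)\cup\nu^{s*}(L)$ for arbitrary submodules $N,L$ of $M$; establishing this shows $\nu^{s*}(N)\cup\nu^{s*}(L)=\nu^{s*}(N+L)\in\Theta^{s*}(M)$, and finite unions follow by induction.

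First I would take $K\in\nu^{s*}(N+L)$, so $K\in Spec^L(M)$ with $soc(K)\subseteq N+L$, and aim to show $soc(K)\subseteq N$ or $soc(K)\subseteq L$. The key structural fact I would exploit is the identity $N=Ann_M(Ann_R(N))$ valid for every submodule of a comultiplication module (via \cite[Lemma 3.7]{ansari2007dual}); this lets me translate submodule containments into reverse containments of annihilator ideals. In particular I expect the following to hold: for submodules of a comultiplication module, $soc(K)\subseteq N+L$ should force a relation among $Ann_R(soc(K))$, $Ann_R(N)$ and $Ann_R(L)$. Since $K\in Spec^L(M)$, we have $Ann_R(soc(K))=\sqrt{Ann_R(K)}$, which is a prime ideal of $R$; this primeness is the crucial leverage.

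The decisive step is to convert the additive containment into an intersection of annihilators. For a comultiplication module, $Ann_R(N+L)=Ann_R(N)\cap Ann_R(L)$ in general, and more usefully one expects $Ann_M(I)\cap Ann_M(J)=Ann_M(I+J)$ together with $Ann_M(IJ)$-type comparisons; the point is that $soc(K)\subseteq N+L=Ann_M(Ann_R(N+L))=Ann_M(Ann_R(N)\cap Ann_R(L))$ should yield, after applying $Ann_R(-)$ and using the comultiplication identity, the ideal inclusion $Ann_R(N)\cap Ann_R(L)\subseteq Ann_R(soc(K))$. Because $Ann_R(soc(K))$ is prime, it must then contain at least one of the product $Ann_R(N)\cdot Ann_R(L)\subseteq Ann_R(N)\cap Ann_R(L)$, hence contain $Ann_R(N)$ or $Ann_R(L)$; say $Ann_R(N)\subseteq Ann_R(soc(K))$. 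Reversing via $Ann_M(-)$ then gives $soc(K)=Ann_M(Ann_R(soc(K)))\subseteq Ann_M(Ann_R(N))=N$, so $K\in\nu^{s*}(N)$.

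The main obstacle I anticipate is making the annihilator bookkeeping rigorous, specifically verifying that $Ann_R(N)\cap Ann_R(L)\subseteq Ann_R(soc(K))$ really follows from $soc(K)\subseteq N+L$ in the comultiplication setting, and correctly handling the direction of the $Ann_M$/$Ann_R$ Galois-type correspondence (these operations are inclusion-reversing, so each application must be tracked carefully). Once that containment of ideals is in hand, the primeness of $Ann_R(soc(K))$ does the combinatorial splitting for free, and the comultiplication identity $N=Ann_M(Ann_R(N))$ closes the argument by transporting the resulting ideal inclusion back to a submodule inclusion.
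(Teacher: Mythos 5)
Your proof is correct and takes essentially the same route as the paper's: from $soc(K)\subseteq N+L$ deduce $Ann_R(N)\cap Ann_R(L)=Ann_R(N+L)\subseteq Ann_R(soc(K))=\sqrt{Ann_R(K)}\in Spec(R)$, use primeness to conclude $Ann_R(N)\subseteq Ann_R(soc(K))$ or $Ann_R(L)\subseteq Ann_R(soc(K))$, and then apply $soc(K)\subseteq Ann_M(Ann_R(soc(K)))\subseteq Ann_M(Ann_R(N))=N$ via the comultiplication identity. The one step you flagged as a potential obstacle is in fact automatic: $Ann_R(N+L)=Ann_R(N)\cap Ann_R(L)$ and the inclusion-reversal of $Ann_R(-)$ hold for arbitrary modules, so the comultiplication hypothesis is needed only in the final step $Ann_M(Ann_R(N))=N$.
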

\begin{proof}
Let $M$ be a comultiplication $R$-module and $N_1, N_2\leq M$. It is sufficient to show that $\nu^{s*}(N_1+N_2)\subseteq\nu^{s*}(N_1)+\nu^{s*}(N_2)$. So let $K\in \nu^{s*}(N_1+N_2)$. Then $soc(K)\subseteq N_1+N_2$. Hence $Ann_R(N_1)\cap Ann_R(N_2)=Ann_R(N_1+N_2)\subseteq Ann_R(soc(K))=\sqrt{Ann_R(K)}\in Spec(R)$ as $K$ is a secondary submodule. This follows that $Ann_R(N_1)\subseteq Ann_R(soc(K))$ or $Ann_R(N_2)\subseteq Ann_R(soc(K))$. Thus $soc(K)\subseteq Ann_M(Ann_R(soc(K)))\subseteq Ann_M(Ann_R(N_1))=N_1$ or $soc(K)\subseteq Ann_M(Ann_R(soc(K)))\subseteq Ann_M(Ann_R(N_2))=N_2$. Therefore, $K\in\nu^{s*}(N_1)\cup\nu^{s*}(N_2)$.
\end{proof}
In the following theorem, we construct the Zariski topology on $Spec^L(M)$ of an $R$-module $M$ by introducing another variety of any submodule $N$ of $M$ by  $\nu^{s}(N)=\{K\in Spec^L(M)\,\mid\, Ann_R(N)\subseteq \sqrt{Ann_R(K)}\}$.
\begin{theorem}\label{Theorem 2.3}
The following hold for any $R$-module $M$:
\begin{enumerate}
\item $\nu^{s}(M)=Spec^L(M)$ and $\nu^{s}(0)=\emptyset$.
\item $\underset{i\in I}{\bigcap}\nu^{s}(N_i)=\nu^{s}(\underset{i\in I}{\bigcap}Ann_M(Ann_R(N_i)))$ for any family of submodules $\{N_i\}_{i\in I}$.
\item $\nu^{s}(N_1)\cup \nu^{s}(N_2)=\nu^{s}(N_1+N_2)$ for any $N_1, N_2\leq M$.
\end{enumerate}
\end{theorem}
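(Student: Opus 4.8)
The plan is to verify the three closed-set axioms directly from the definition $\nu^{s}(N)=\{K\in Spec^L(M)\,\mid\,Ann_R(N)\subseteq\sqrt{Ann_R(K)}\}$, exploiting two features of the secondary-like spectrum: for every $K\in Spec^L(M)$ the ideal $\sqrt{Ann_R(K)}$ is prime, and by the defining condition of $Spec^L(M)$ it equals $Ann_R(soc(K))$. Throughout I would use the elementary double-annihilator identity $Ann_R(Ann_M(Ann_R(N)))=Ann_R(N)$, valid for every submodule $N$: the inclusion $N\subseteq Ann_M(Ann_R(N))$ gives $Ann_R(Ann_M(Ann_R(N)))\subseteq Ann_R(N)$ after applying the order-reversing operator $Ann_R$, while the reverse inclusion is immediate from the definition of $Ann_M$. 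For part (1), $\nu^{s}(M)=Spec^L(M)$ follows since $Ann_R(M)\subseteq Ann_R(K)\subseteq\sqrt{Ann_R(K)}$ for every $K\leq M$; and $\nu^{s}(0)=\emptyset$ follows because $Ann_R(0)=R$ would force $1\in\sqrt{Ann_R(K)}$, which is impossible for the nonzero secondary submodule $K$.

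For part (2), writing $I_i=Ann_R(N_i)$ and $J=\bigcap_{i\in I}Ann_M(I_i)$, I would prove both inclusions by tracking annihilators. The inclusion $\subseteq$ is the main obstacle, and the step where the defining condition of $Spec^L(M)$ is genuinely needed: from $I_i\subseteq P:=\sqrt{Ann_R(K)}$ for all $i$ one gets $Ann_M(P)\subseteq Ann_M(I_i)$ by monotonicity of $Ann_M$, hence $Ann_M(P)\subseteq J$ and therefore $Ann_R(J)\subseteq Ann_R(Ann_M(P))$; to conclude $Ann_R(J)\subseteq P$ I need $Ann_R(Ann_M(P))=P$, which would fail for an arbitrary ideal $P$ but holds here because $P=Ann_R(soc(K))$ is an annihilator ideal, so the double-annihilator identity applies with $N=soc(K)$. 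The reverse inclusion $\supseteq$ is easier: from $J\subseteq Ann_M(I_i)$ one obtains $I_i=Ann_R(Ann_M(I_i))\subseteq Ann_R(J)\subseteq P$, so $K\in\nu^{s}(N_i)$ for every $i$.

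For part (3), the inclusion $\subseteq$ is routine from $Ann_R(N_1+N_2)=Ann_R(N_1)\cap Ann_R(N_2)$ together with monotonicity. The substantive direction is $\supseteq$, where the primality of $\sqrt{Ann_R(K)}$ enters decisively: if $K\in\nu^{s}(N_1+N_2)$ then $Ann_R(N_1)\cap Ann_R(N_2)\subseteq P$, and since $Ann_R(N_1)\,Ann_R(N_2)\subseteq Ann_R(N_1)\cap Ann_R(N_2)\subseteq P$ with $P$ prime, one of $Ann_R(N_1),Ann_R(N_2)$ must be contained in $P$, that is, $K\in\nu^{s}(N_1)\cup\nu^{s}(N_2)$. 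Taken together, parts (1)--(3) supply exactly the three axioms (the whole space and the empty set, closure under arbitrary intersections, and closure under finite unions) needed to realize $\Theta^{s}(M)$ as the closed sets of a topology; the only conceptual subtlety, concentrated in the forward inclusion of (2), is precisely the passage that forces the hypothesis $\sqrt{Ann_R(K)}=Ann_R(soc(K))$ built into the definition of $Spec^L(M)$.
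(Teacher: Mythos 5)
Your proposal is correct and follows essentially the same route as the paper's own proof: both inclusions in (2) are driven by the double-annihilator identity $Ann_R(Ann_M(Ann_R(N)))=Ann_R(N)$, with the defining condition $\sqrt{Ann_R(K)}=Ann_R(soc(K))$ of $Spec^L(M)$ invoked at exactly the same point (to get $Ann_R(Ann_M(\sqrt{Ann_R(K)}))=\sqrt{Ann_R(K)}$), and (3) uses $Ann_R(N_1+N_2)=Ann_R(N_1)\cap Ann_R(N_2)$ together with the primality of $\sqrt{Ann_R(K)}$ just as the paper does. Your added remarks (the explicit proof of the double-annihilator identity and the observation that the converse in (2) needs it only for annihilator ideals) merely make explicit what the paper leaves implicit.
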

\begin{proof}
(1) is straightforward.\\
(2) Let $K\in \underset{i\in I}{\bigcap}\nu^{s}(N_i)$. Then $Ann_R(N_i)\subseteq\sqrt{Ann_R(K)}$ and thus $Ann_M(\sqrt{Ann_R(K)})\subseteq Ann_M(Ann_R(N_i))$ for each $i\in I$. Hence $Ann_M(\sqrt{Ann_R(K)})\subseteq \underset{i\in I}{\bigcap}Ann_M(Ann_R(N_i))$ which follows that $Ann_R(\underset{i\in I}{\bigcap}Ann_M(Ann_R(N_i)))\subseteq Ann_R(Ann_M(\sqrt{Ann_R(K)}))=Ann_R(Ann_M(Ann_R(soc(K))))=Ann_R(soc(K))=\sqrt{Ann_R(K)}$. So $K\in \nu^{s}(\underset{i\in I}{\bigcap}Ann_M(Ann_R(N_i)))$ and so $\underset{i\in I}{\bigcap}\nu^{s}(N_i)\subseteq\nu^{s}(\underset{i \in I}{\bigcap}Ann_M(Ann_R(N_i)))$. Conversely, let $K\in\nu^{s}(\underset{i\in I}{\bigcap}Ann_M(Ann_R(N_i)))$. Then we have $Ann_R(\underset{i\in I}{\bigcap}Ann_M(Ann_R(N_i)))\subseteq\sqrt{Ann_R(K)}$. But for any $i\in I$, $\underset{i\in I}{\bigcap}Ann_M(Ann_R(N_i))\subseteq Ann_M(Ann_R(N_i))$ and thus $Ann_R(N_i)=Ann_R(Ann_M(Ann_R(N_i)))\subseteq Ann_R(\underset{i\in I}{\bigcap}Ann_M(Ann_R(N_i)))\subseteq\sqrt{Ann_R(K)}$. Therefore, $K\in\underset{i\in I}{\bigcap}\nu^{s}(N_i)$, as desired. \\
(3) Since $N_1\subseteq N_1+N_2$ and $N_2\subseteq N_1+N_2$, then $\nu^{s}(N_1)\subseteq\nu^{s}(N_1+N_2)$ and $\nu^{s}(N_2)\subseteq\nu^{s}(N_1+N_2)$. Therefore $\nu^s(N_1)\cup\nu^{s}(N_2)\subseteq\nu^s(N_1+N_2)$. Conversely, let $K\in\nu^{s}(N_1+N_2)$. Then $Ann_R(N_1)\cap Ann_R(N_2)=Ann_R(N_1+N_2)\subseteq\sqrt{Ann_R(K)}\in Spec(R)$ and hence $Ann_R(N_1)\subseteq\sqrt{Ann_R(K)}$ or $Ann_R(N_2)\subseteq\sqrt{Ann_R(K)}$. This implies that $K\in\nu^{s}(N_1)\cup\nu^{s}(N_2)$.
\end{proof}
Let $M$ be an $R$-module. By Theorem \ref{Theorem 2.3}, there always exists a topology on $Spec^L(M)$ having $\Theta^{s}(M)=\{\nu^{s}(N)\,\mid\,N\leq M\}$ as the collection of closed sets. We call this topology the Zariski topology on $Spec^L(M)$, or the $\mathcal{SL}$-topology for short.

In the following lemma, we state some useful relationships between the varieties $V^{s*}(N), V^{s}(N), \nu^{s*}(N)$ and $\nu^{s}(N)$.
\begin{lemma}\label{Lemma 2.4}
Let $N$ and $N^\prime$ be submodules of an $R$-module $M$ and $I$ be an ideal of $R$. Then the following hold:
\begin{enumerate}
\item $V^s(N)=\nu^s(N)\cap Spec^s(M)$.
\item $V^{s*}(N)=\nu^{s*}(N)\cap Spec^s(M)$.
\item If $\sqrt{Ann_R(N)}=\sqrt{Ann_R(N^\prime)}$, then $\nu^{s}(N)=\nu^s(N^\prime)$. The converse is also true if $N, N^\prime\in Spec^L(M)$.
\item $\nu^{s*}(N)\subseteq \nu^{s}(N)$. Furthermore, if $M$ is a comultiplication module, then the equality holds.
\item $\nu^s(Ann_M(I))=\nu^s(Ann_M(\sqrt{I}))=\nu^{s*}(Ann_M(I))=\nu^{s*}(Ann_M(\sqrt{I}))$.
\item $\nu^s(N)=\nu^s(Ann_M(Ann_R(N)))=\nu^s(Ann_M(\sqrt{Ann_R(N)}))=\nu^{s*}(Ann_M(Ann_R(N)))=\nu^{s*}(Ann_M(\sqrt{Ann_R(N)}))$.
\item If $N\in Spec^L(M)$ or $M$ is a comultiplication module, then $\nu^{s}(N)=\nu^s(soc(N))$.
\end{enumerate}
\end{lemma}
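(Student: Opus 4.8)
The plan is to prove the seven parts roughly in the stated order, since the later parts reduce to the earlier ones together with Theorem~\ref{Theorem 2.1}. Throughout I will lean on three standing facts: for a second submodule $S$ one has $soc(S)=S$ and $Ann_R(S)\in Spec(R)$, so that $\sqrt{Ann_R(S)}=Ann_R(S)$; for $K\in Spec^L(M)$ the ideal $\sqrt{Ann_R(K)}=Ann_R(soc(K))$ is prime, hence radical; and the adjunction inclusions $N\subseteq Ann_M(Ann_R(N))$ and $I\subseteq Ann_R(Ann_M(I))$ together with the monotonicity of $Ann_R(-)$ and $Ann_M(-)$.

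For (1), intersecting $\nu^{s}(N)$ with $Spec^s(M)$ restricts attention to second $S$; for these $\sqrt{Ann_R(S)}=Ann_R(S)$, so the defining condition $Ann_R(N)\subseteq\sqrt{Ann_R(S)}$ becomes $Ann_R(N)\subseteq Ann_R(S)$, which is exactly the condition defining $V^s(N)$. Part (2) is the same maneuver using $soc(S)=S$, turning $soc(S)\subseteq N$ into $S\subseteq N$. For (3), I would note that since $\sqrt{Ann_R(K)}$ is radical, $Ann_R(N)\subseteq\sqrt{Ann_R(K)}$ holds iff $\sqrt{Ann_R(N)}\subseteq\sqrt{Ann_R(K)}$; thus the membership test for $\nu^{s}$ depends on $N$ only through $\sqrt{Ann_R(N)}$, giving the forward implication at once. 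For the converse with $N,N^\prime\in Spec^L(M)$, I would test each module against its own variety: $N\in\nu^{s}(N)$ and the hypothesis $\nu^{s}(N)=\nu^{s}(N^\prime)$ force $Ann_R(N^\prime)\subseteq\sqrt{Ann_R(N)}$, and symmetrically, yielding $\sqrt{Ann_R(N)}=\sqrt{Ann_R(N^\prime)}$.

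Part (4) forward is direct: $soc(K)\subseteq N$ gives $Ann_R(N)\subseteq Ann_R(soc(K))=\sqrt{Ann_R(K)}$. For the comultiplication equality I would use $soc(K)=Ann_M(Ann_R(soc(K)))$ and $N=Ann_M(Ann_R(N))$ to convert the inclusion $Ann_R(N)\subseteq\sqrt{Ann_R(K)}=Ann_R(soc(K))$ back into $soc(K)\subseteq N$. The crux of the lemma is (5), which asserts that all four varieties coincide with \emph{no} comultiplication hypothesis. The key observation is the chain of equivalences $soc(K)\subseteq Ann_M(I)\iff I\,soc(K)=0\iff I\subseteq Ann_R(soc(K))=\sqrt{Ann_R(K)}\iff\sqrt{I}\subseteq\sqrt{Ann_R(K)}$, the last step because $\sqrt{Ann_R(K)}$ is radical. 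This shows at once that $\nu^{s*}(Ann_M(I))$ and $\nu^{s*}(Ann_M(\sqrt{I}))$ are both equal to $\{K\mid\sqrt{I}\subseteq\sqrt{Ann_R(K)}\}$. To fold in the $\nu^{s}$-varieties, I would combine part (4) (which gives the inclusion $\nu^{s*}(Ann_M(I))\subseteq\nu^{s}(Ann_M(I))$) with the reverse inclusion obtained from $I\subseteq Ann_R(Ann_M(I))$: if $K\in\nu^{s}(Ann_M(I))$ then $Ann_R(Ann_M(I))\subseteq\sqrt{Ann_R(K)}$, whence $I\subseteq\sqrt{Ann_R(K)}$ and $K\in\nu^{s*}(Ann_M(I))$ by the equivalence above.

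Finally, (6) and (7) are corollaries. For (6) I set $I=Ann_R(N)$ so that the last four listed varieties coincide by (5); to attach $\nu^{s}(N)$ I use the standard identity $Ann_R(Ann_M(Ann_R(N)))=Ann_R(N)$ and invoke (3) to conclude $\nu^{s}(N)=\nu^{s}(Ann_M(Ann_R(N)))$. For (7), in the case $N\in Spec^L(M)$ the defining equality $Ann_R(soc(N))=\sqrt{Ann_R(N)}$ gives $\sqrt{Ann_R(soc(N))}=\sqrt{Ann_R(N)}$, so (3) applies; in the comultiplication case I use (4) to replace $\nu^{s}$ by $\nu^{s*}$ on both sides and then Theorem~\ref{Theorem 2.1}(5), which says $\nu^{s*}(soc(N))=\nu^{s*}(N)$. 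I expect part (5) to be the only genuine obstacle, since it is where the primality of $\sqrt{Ann_R(K)}$ and the annihilator adjunction must be used to force the $\nu^{s}$- and $\nu^{s*}$-conditions to collapse to the single condition $\sqrt{I}\subseteq\sqrt{Ann_R(K)}$ without any comultiplication assumption.
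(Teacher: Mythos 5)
Your proposal is correct and takes essentially the same route as the paper: (1)--(4) by direct unwinding (the paper leaves these as clear), (5) via the primality (hence radicalness) of $\sqrt{Ann_R(K)}=Ann_R(soc(K))$ together with the adjunction facts $I\subseteq Ann_R(Ann_M(I))$ and $soc(K)\subseteq Ann_M(Ann_R(soc(K)))$, and (6), (7) as reductions to (5), (3), part (4) and Theorem~\ref{Theorem 2.1}(5). Your packaging of (5) as one equivalence chain collapsing all four varieties to $\{K\in Spec^L(M)\,\mid\,\sqrt{I}\subseteq\sqrt{Ann_R(K)}\}$ is a slightly tidier presentation of the paper's double-containment argument, but it rests on the same underlying facts.
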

\begin{proof}
(1), (2), (3) and (4) are clear.\\
(5) We first show that $\nu^s(Ann_M(J))=\nu^{s*}(Ann_M(J))$ for any ideal $J$ of $R$. So let $J$ be an ideal of $R$. By (4), $\nu^{s*}(Ann_M(J))\subseteq \nu^{s}(Ann_M(J))$. For the reverse inclusion, let $K\in\nu^{s}(Ann_M(J))$. Then $Ann_R(Ann_M(J))\subseteq\sqrt{Ann_R(K)}=Ann_R(soc(K))$ and thus $soc(K)\subseteq Ann_M(Ann_R(soc(K)))\subseteq Ann_M(Ann_R(Ann_M(J)))=Ann_M(J)$ which follows that $K\in\nu^{s*}(Ann_M(J))$. Therefore, $\nu^{s}(Ann_M(J))\subseteq\nu^{s*}(Ann_M(J))$. Now, it is sufficient to prove that $\nu^{s*}(Ann_M(I))=\nu^{s*}(Ann_M(\sqrt{I}))$. Since $I\subseteq\sqrt{I}$, then we get $Ann_M(\sqrt{I})\subseteq Ann_M(I)$ which implies that $\nu^{s*}(Ann_M(\sqrt{I}))\subseteq\nu^{s*}(Ann_M(I))$ by Theorem \ref{Theorem 2.1}(4). Conversely, let $K\in\nu^{s*}(Ann_M(I))$. Then we have $soc(K)\subseteq Ann_M(I)$ and hence we obtain $I\subseteq Ann_R(Ann_M(I))\subseteq Ann_R(soc(K))=\sqrt{Ann_R(K)}$ which implies that $\sqrt{I}\subseteq Ann_R(soc(K))$. Thus $soc(K)\subseteq Ann_M(Ann_R(soc(K)))\subseteq Ann_M(\sqrt{I})$. Therefore $K\in\nu^{s*}(Ann_M(\sqrt{I}))$, as needed. \\
(6) Note that for any $K\in Spec^L(M)$, we have $K\in\nu^{s}(N)\Leftrightarrow Ann_R(Ann_M(Ann_R(N)))=Ann_R(N)\subseteq\sqrt{Ann_R(K)}\Leftrightarrow K\in\nu^{s}(Ann_M(Ann_R(N)))$ which implies that $\nu^{s}(N)=\nu^{s}(Ann_M(Ann_R(N)))$. Now, the result is clear by (5).\\
(7) First, suppose that $M$ is a comultiplication module. By Theorem \ref{Theorem 2.1}(5), we have $\nu^{s*}(soc(N))=\nu^{s*}(N)$. So we obtain $\nu^s(N)=\nu^{s*}(N)=\nu^{s*}(soc(N))=\nu^s(soc(N))$ by (4). For the second case, suppose that $N\in Spec^L(M)$. Since $soc(N)\subseteq N$, then $\nu^s(soc(N))\subseteq\nu^s(N)$. Let $K\in\nu^{s}(N)$ and it is enough to show that $Ann_R(soc(K))\subseteq\sqrt{Ann_R(K)}$. Since $K\in\nu^{s}(N)$, then $Ann_R(N)\subseteq\sqrt{Ann_R(K)}$. This follows that $Ann_R(soc(N))=\sqrt{Ann_R(N)}\subseteq \sqrt{\sqrt{Ann_R(K)}}=\sqrt{Ann_R(K)}$, as desired.
\end{proof}
Remark that the reverse inclusion in Lemma \ref{Lemma 2.4}(4) is not always true. For example, take $M=\mathbb{Q}\times\mathbb{Q}$ as $\mathbb{Q}$-module, where $\mathbb{Q}$ is the field of rational numbers. Let $N=\mathbb{Q}\times\{0\}$ and $L=\{0\}\times\mathbb{Q}$. By some computations, we can see that $L\in\nu^{s}(N)-\nu^{s*}(N)$.
\begin{corollary}\label{Corollary 2.5}
Every secondary cotop module is a cotop module.
\end{corollary}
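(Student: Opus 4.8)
The goal is to show that if $M$ is a secondary cotop module, then $M$ is a cotop module. Recall that $M$ being secondary cotop means $\Theta^{s*}(M)=\{\nu^{s*}(N)\mid N\leq M\}$ is closed under finite union, while $M$ being cotop means $\zeta^{s*}(M)=\{V^{s*}(N)\mid N\leq M\}$ is closed under finite union. The plan is to reduce the statement about the $V^{s*}$ varieties to the already-granted statement about the $\nu^{s*}$ varieties, using the bridge provided by Lemma \ref{Lemma 2.4}(2), which says precisely that $V^{s*}(N)=\nu^{s*}(N)\cap Spec^s(M)$.

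First I would take two arbitrary submodules $N_1,N_2\leq M$ and consider the union $V^{s*}(N_1)\cup V^{s*}(N_2)$. Using Lemma \ref{Lemma 2.4}(2) on each term and distributing the intersection with $Spec^s(M)$ over the union, I get
\[
V^{s*}(N_1)\cup V^{s*}(N_2)=\bigl(\nu^{s*}(N_1)\cup\nu^{s*}(N_2)\bigr)\cap Spec^s(M).
\]
Now the secondary cotop hypothesis supplies a submodule $L\leq M$ with $\nu^{s*}(N_1)\cup\nu^{s*}(N_2)=\nu^{s*}(L)$. Substituting this and applying Lemma \ref{Lemma 2.4}(2) once more, this time in the reverse direction, yields
\[
V^{s*}(N_1)\cup V^{s*}(N_2)=\nu^{s*}(L)\cap Spec^s(M)=V^{s*}(L),
\]
which exhibits the union as a member of $\zeta^{s*}(M)$. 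An easy induction then extends this to any finite union, so $\zeta^{s*}(M)$ is closed under finite union and $M$ is a cotop module.

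The argument is essentially a one-line consequence of part (2) of Lemma \ref{Lemma 2.4}, so I do not anticipate a genuine obstacle; the only point requiring care is making sure that the submodule $L$ produced by the secondary cotop hypothesis is the same submodule used to name the resulting $V^{s*}$-set, which is automatic here since the identity $V^{s*}(L)=\nu^{s*}(L)\cap Spec^s(M)$ holds for every submodule $L$. It is worth noting that this proof does not need the more delicate $\nu^s$ machinery or the comultiplication hypotheses appearing elsewhere in the lemma; only the clean intersection formula (2) is invoked.
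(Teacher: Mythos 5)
Your proof is correct and follows essentially the same route as the paper's own argument: both apply Lemma \ref{Lemma 2.4}(2) to rewrite each $V^{s*}(N_i)$ as $\nu^{s*}(N_i)\cap Spec^s(M)$, distribute the intersection over the union, invoke the secondary cotop hypothesis to replace $\nu^{s*}(N_1)\cup\nu^{s*}(N_2)$ by a single $\nu^{s*}(T)$, and convert back to $V^{s*}(T)$. The only difference is that you make the routine induction to arbitrary finite unions explicit, which the paper leaves implicit.
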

\begin{proof}
Let $M$ be a secondary cotop module and $N_1, N_2\leq M$. By Lemma \ref{Lemma 2.4}(2), we have $V^{s*}(N_1)\cup V^{s*}(N_2)=(Spec^s(M)\cap\nu^{s*}(N_1))\cup (Spec^s(M)\cap\nu^{s*}(N_2))=Spec^s(M)\cap (\nu^{s*}(N_1)\cup\nu^{s*}(N_2))=Spec^s(M)\cap\nu^{s*}(T)=V^{s*}(T)$ for some submodule $T$ of $M$. Therefore $M$ is a cotop module.
\end{proof}
Let $M$ be an $R$-module. By Corollary \ref{Corollary 2.5}, if $M$ is a secondary cotop module, then $\zeta^{s*}(M)=\{V^{s*}(N)\,\mid\, N\leq M\}$ generates the quasi-Zariski topology on $Spec^s(M)$. By Lemma \ref{Lemma 2.4}(2), this topology is a topological subspace of the quasi-Zariski topology on $Spec^L(M)$. In addition, by Lemma \ref{Lemma 2.4}(1), $Spec^L(M)$ with the $\mathcal{SL}$-topology contains $Spec^s(M)$ with the Zariski topology as a topological subspace.

Let $M$ be an $R$-module. Throughout the rest of this paper, we assume that both $Spec^s(M)$ and $Spec^L(M)$ are non-empty sets unless stated otherwise, and are equipped with the Zariski topology. We also consider $\psi$ and $\varphi$ as defined in the introduction.

Let $M$ be an $R$-module and $p$ be a prime ideal of $R$. we set $Spec^L_p(M)=\{K\in Spec^L(M)\,\mid\,\sqrt{Ann_R(K)}=p\}$.
\begin{proposition} \label{Proposition 2.6}
The following statements are equivalent for any $R$-module $M$:
\begin{enumerate}
\item If whenever $K, K^\prime\in Spec^L(M)$ with $\nu^s(K)=\nu^s(K^\prime)$, then $K=K^\prime$.
\item $|Spec^L_p(M)|\leq 1$ for any $p\in Spec(R)$.
\item $\varphi$ is injective.
\end{enumerate}
\end{proposition}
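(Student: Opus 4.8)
The plan is to reduce all three statements to a single common condition on pairs $K, K^\prime \in Spec^L(M)$ — namely that $\sqrt{Ann_R(K)} = \sqrt{Ann_R(K^\prime)}$ forces $K = K^\prime$ — and then close the cycle $(1)\Rightarrow(2)\Rightarrow(3)\Rightarrow(1)$. Before doing so I would record two bridging facts. The first is the heart of the argument: since both $K$ and $K^\prime$ lie in $Spec^L(M)$, Lemma \ref{Lemma 2.4}(3) supplies the full biconditional
\[
\nu^{s}(K) = \nu^{s}(K^\prime) \iff \sqrt{Ann_R(K)} = \sqrt{Ann_R(K^\prime)}.
\]
The second is an order-theoretic remark handling $\varphi$: because $K\subseteq M$ we always have $Ann_R(M)\subseteq Ann_R(K)\subseteq \sqrt{Ann_R(K)}$, so $\sqrt{Ann_R(K)}$ and $\sqrt{Ann_R(K^\prime)}$ are ideals of $R$ containing $Ann_R(M)$. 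By the ideal correspondence for the quotient $R\rightarrow R/Ann_R(M)$, the equality $\varphi(K)=\varphi(K^\prime)$ of their images in $Spec(R/Ann_R(M))$ is therefore equivalent to $\sqrt{Ann_R(K)} = \sqrt{Ann_R(K^\prime)}$.

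With these two facts in hand the three implications become routine bookkeeping. For $(1)\Rightarrow(2)$, any two elements of $Spec^L_p(M)$ share the radical annihilator $p$, hence have equal $\nu^{s}$-varieties by the biconditional above, hence coincide by $(1)$; so each fibre has at most one point. For $(2)\Rightarrow(3)$, if $\varphi(K)=\varphi(K^\prime)$ then the correspondence remark places $K$ and $K^\prime$ in a common fibre $Spec^L_p(M)$, and $(2)$ forces $K=K^\prime$, giving injectivity. For $(3)\Rightarrow(1)$, if $\nu^{s}(K)=\nu^{s}(K^\prime)$ then Lemma \ref{Lemma 2.4}(3) yields $\sqrt{Ann_R(K)}=\sqrt{Ann_R(K^\prime)}$, whence $\varphi(K)=\varphi(K^\prime)$ and $(3)$ gives $K=K^\prime$.

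I do not anticipate a genuine obstacle: the proposition is essentially three equivalent restatements of the same injectivity condition, and the only ingredient carrying real content is the \emph{converse} direction of Lemma \ref{Lemma 2.4}(3), which is exactly what requires $K, K^\prime\in Spec^L(M)$ rather than arbitrary submodules. The one point worth stating explicitly is that $\varphi$ is well defined with image in $Spec(R/Ann_R(M))$, i.e. that $\sqrt{Ann_R(K)}/Ann_R(M)$ is prime; this is immediate from $\sqrt{Ann_R(K)}\in Spec(R)$ for secondary $K$, as noted in the preliminaries.
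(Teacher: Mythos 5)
Your proposal is correct and follows essentially the same route as the paper: the same cycle $(1)\Rightarrow(2)\Rightarrow(3)\Rightarrow(1)$, with Lemma \ref{Lemma 2.4}(3) (including its converse direction, which is where membership in $Spec^L(M)$ is used) as the sole substantive ingredient. Your explicit remarks on the ideal correspondence under $R\rightarrow R/Ann_R(M)$ and the well-definedness of $\varphi$ are details the paper leaves implicit, but they do not change the argument.
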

\begin{proof}
(1)$\Rightarrow$(2): Let $p\in spec(R)$ and $K, K^\prime\in Spec^L_p(M)$. Then $K, K^\prime\in Spec^L(M)$ and $\sqrt{Ann_R(K)}=\sqrt{Ann_R(K^\prime)}=p$. By Lemma \ref{Lemma 2.4}(3), we obtain $\nu^s(K)=\nu^s(K^\prime)$ and thus $K=K^\prime$ by the assumption (1).\\
(2)$\Rightarrow$(3): Suppose that $\varphi(K)=\varphi(K^\prime)$, where $K, K^\prime\in Spec^L(M)$. Then $\sqrt{Ann_R(K)}=\sqrt{Ann_R(K^\prime)}$. Let $p=\sqrt{Ann_R(K)}\in Spec(R)$. Then $K, K^\prime\in Spec^L_p(M)$ and by the hypothesis, we get $K=K^\prime$.\\
(3)$\Rightarrow$(1): Let $K, K^\prime\in Spec^L(M)$ with $\nu^s(K)=\nu^s(K^\prime)$. By Lemma \ref{Lemma 2.4}(3), we have $\sqrt{Ann_R(K)}=\sqrt{Ann_R(K^\prime)}$. So $\varphi(K)=\varphi(K^\prime)$ and so $K=K^\prime$ as $\varphi$ is injective.
\end{proof}
The following is an easy result for Proposition \ref{Proposition 2.6}.
\begin{corollary}
Let $M$ be an $R$-module. If $|Spec^L_p(M)|= 1$ for any $p\in Spec(R)$, then $\varphi$ is bijective.
\end{corollary}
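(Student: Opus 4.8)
The plan is to deduce this directly from Proposition~\ref{Proposition 2.6}. The hypothesis here is that $|Spec^L_p(M)| = 1$ for \emph{every} $p \in Spec(R)$, which is strictly stronger than condition (2) of the proposition (which only requires $|Spec^L_p(M)| \le 1$). Since the stronger hypothesis trivially implies condition (2), the proposition gives us immediately that $\varphi$ is injective. Thus the only real content to establish is that $\varphi$ is surjective; bijectivity then follows.

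First I would recall the definition of $\varphi : Spec^L(M) \to Spec(R/Ann_R(M))$, namely $\varphi(K) = \sqrt{Ann_R(K)}/Ann_R(M)$. To prove surjectivity, I would take an arbitrary element of the codomain, which has the form $p/Ann_R(M)$ for some prime ideal $p \in Spec(R)$ with $p \supseteq Ann_R(M)$ (these are exactly the primes of the quotient ring $R/Ann_R(M)$, by the standard correspondence between primes of $R/Ann_R(M)$ and primes of $R$ containing $Ann_R(M)$). The hypothesis $|Spec^L_p(M)| = 1$ guarantees that for this particular $p$ there exists (at least, and in fact exactly) one $K \in Spec^L(M)$ with $\sqrt{Ann_R(K)} = p$. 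For that $K$ we have $\varphi(K) = \sqrt{Ann_R(K)}/Ann_R(M) = p/Ann_R(M)$, exhibiting the chosen element as a value of $\varphi$.

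The one point requiring a little care, and the step I expect to be the main obstacle, is the matching of index sets: the hypothesis ranges over all $p \in Spec(R)$, whereas surjectivity of $\varphi$ only concerns those primes $p$ that contain $Ann_R(M)$. I would argue that whenever $Spec^L_p(M)$ is nonempty, say containing some $K$, then $Ann_R(M) \subseteq Ann_R(K) \subseteq \sqrt{Ann_R(K)} = p$, so $p$ automatically contains $Ann_R(M)$; hence the primes for which the fiber is nonempty are precisely (a subset of) those giving genuine points of $Spec(R/Ann_R(M))$. Consequently the hypothesis, by forcing every fiber to be a singleton, in particular forces the relevant fibers over primes containing $Ann_R(M)$ to be nonempty, which is exactly what surjectivity demands. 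Combining injectivity from Proposition~\ref{Proposition 2.6} with this surjectivity argument yields that $\varphi$ is bijective.
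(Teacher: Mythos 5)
Your proof is correct and matches the paper's intent: the paper leaves this corollary unproved as ``an easy result for Proposition~2.6,'' and the intended argument is exactly yours --- injectivity from condition (2)$\Rightarrow$(3) of that proposition, surjectivity because each prime $\overline{p}=p/Ann_R(M)$ of $\overline{R}$ has a nonempty fiber $Spec^L_p(M)$. Your extra observation that a nonempty fiber forces $p\supseteq Ann_R(M)$ (so $\varphi(K)$ lands where it should) is a worthwhile point of care that the paper glosses over.
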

Let $M$ be an $R$-module. From now on, for any ideal $I$ of $R$ containing $Ann_R(M)$, $\overline{I}$ and $\overline{R}$ will denote $I/Ann_R(M)$ and $R/Ann_R(M)$, respectively. In addition, the class $r+Ann_R(M)$ in $R/Ann_R(M)$ will be expressed by $\overline{r}$ for any $r\in R$.

In the following lemma, we list some properties of the natural map $\psi$ of $Spec^L(M)$ which are needed in the rest of this paper.
\begin{lemma} (\cite[Proposition 3.6 and Theorem 3.11]{ansari2014zariski}) \label{Lemma 2.8} The following hold for any $R$-module $M$:
\begin{enumerate}
\item The natural map $\psi$ of $Spec^s(M)$ is continuous and $\psi^{-1}(V^{\overline{R}}(\overline{I}))=V^s(Ann_M(I))$ for any ideal $I$ of $R$ containing $Ann_R(M)$.
\item If $\psi$ is surjective, then $\psi$ is both open and closed with $\psi(V^s(N))=V^{\overline{R}}(\overline{Ann_R(N)})$ and $\psi(Spec^s(M)-V^s(N))=Spec(\overline{R})-V^{\overline{R}}(Ann_R(N))$ for any submodule $N$ of $M$.
\end{enumerate}
\end{lemma}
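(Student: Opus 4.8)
The plan is to prove the set-theoretic preimage identity in part (1) first, read off continuity from it, and then invoke surjectivity to obtain the two image formulas of part (2), from which openness and closedness are immediate.

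First I would establish the preimage formula in (1). For $S\in Spec^s(M)$, the condition $\psi(S)\in V^{\overline{R}}(\overline{I})$ unwinds to $\overline{Ann_R(S)}\supseteq\overline{I}$, i.e. $Ann_R(S)\supseteq I$. The key observation is that this is equivalent to $S\in V^s(Ann_M(I))$, via the two order-reversing containments: if $Ann_R(S)\supseteq I$ then $IS\subseteq Ann_R(S)S=0$, so $S\subseteq Ann_M(I)$ and hence $Ann_R(Ann_M(I))\subseteq Ann_R(S)$; conversely, since $I\subseteq Ann_R(Ann_M(I))$ always holds, the containment $Ann_R(Ann_M(I))\subseteq Ann_R(S)$ forces $I\subseteq Ann_R(S)$. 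Therefore $Ann_R(S)\supseteq I$ if and only if $Ann_R(S)\supseteq Ann_R(Ann_M(I))$, which is exactly membership in $V^s(Ann_M(I))$, giving $\psi^{-1}(V^{\overline{R}}(\overline{I}))=V^s(Ann_M(I))$. Since every closed set of $Spec(\overline{R})$ is of the form $V^{\overline{R}}(\overline{I})$ with $I\supseteq Ann_R(M)$, and its preimage is the closed set $V^s(Ann_M(I))$, continuity of $\psi$ follows at once.

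For (2), assume $\psi$ is surjective and prove $\psi(V^s(N))=V^{\overline{R}}(\overline{Ann_R(N)})$. The inclusion $\subseteq$ is immediate from the definition of $\psi$: if $S\in V^s(N)$ then $Ann_R(S)\supseteq Ann_R(N)$, so $\psi(S)=\overline{Ann_R(S)}\supseteq\overline{Ann_R(N)}$. For $\supseteq$, take $\overline{q}\in V^{\overline{R}}(\overline{Ann_R(N)})$; then $q$ is a prime of $R$ with $q\supseteq Ann_R(N)\supseteq Ann_R(M)$, and surjectivity yields some $S\in Spec^s(M)$ with $Ann_R(S)=q$, whence $S\in V^s(N)$ and $\overline{q}\in\psi(V^s(N))$. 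As the closed sets of $Spec^s(M)$ are precisely the $V^s(N)$, this shows $\psi$ is closed. The openness statement is handled the same way: I would verify $\psi(Spec^s(M)-V^s(N))=Spec(\overline{R})-V^{\overline{R}}(\overline{Ann_R(N)})$, where $\subseteq$ is again direct from the definition and $\supseteq$ uses surjectivity to realize any prime $q$ with $q\not\supseteq Ann_R(N)$ as $Ann_R(S)$ for some second submodule $S$ lying outside $V^s(N)$; since the sets $Spec^s(M)-V^s(N)$ form a basis, $\psi$ is open.

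The only point requiring genuine care is that both ``onto'' inclusions in (2) truly depend on the surjectivity hypothesis: without it, the image of a closed (respectively open) set need not exhaust the corresponding closed (respectively open) subset of $Spec(\overline{R})$, and the equalities collapse to mere inclusions. Everything else is routine bookkeeping with the order-reversing behaviour of $Ann_R(-)$ and $Ann_M(-)$ together with the elementary fact $I\subseteq Ann_R(Ann_M(I))$, so I expect no substantive obstacle beyond tracking these containments carefully.
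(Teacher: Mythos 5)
Your proof is correct. Note, however, that the paper offers no argument of its own for this lemma: it is quoted directly from \cite[Proposition 3.6 and Theorem 3.11]{ansari2014zariski}, so there is no internal proof to compare against, and your write-up is essentially the standard verification from that source. The two points where the content actually lies are handled properly: the equivalence $I\subseteq Ann_R(S)\Leftrightarrow Ann_R(Ann_M(I))\subseteq Ann_R(S)$, which you justify in both directions (via $IS=\{0\}\Rightarrow S\subseteq Ann_M(I)$ and the elementary containment $I\subseteq Ann_R(Ann_M(I))$), and the use of surjectivity to realize each prime $\overline{q}\in V^{\overline{R}}(\overline{Ann_R(N)})$ as $\psi(S)$ with $Ann_R(S)=q$ (legitimate because ideals of $\overline{R}$ correspond to ideals of $R$ containing $Ann_R(M)$, so $\psi(S)=\overline{q}$ forces $Ann_R(S)=q$). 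One cosmetic remark: in the openness step the complements $Spec^s(M)-V^s(N)$ are in fact \emph{all} the open sets, not merely a basis, though your basis argument is harmless since images commute with unions; and your formula $\psi(Spec^s(M)-V^s(N))=Spec(\overline{R})-V^{\overline{R}}(\overline{Ann_R(N)})$ silently corrects a typo in the paper's statement, where the overline on $Ann_R(N)$ is missing.
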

In the next two propositions, we provide similar results for $\varphi$.
\begin{proposition}\label{Proposition 2.9}
Let $M$ be an $R$-module. Then $\varphi^{-1}(V^{\overline{R}}(\overline{I}))=\nu^s(Ann_M(I))$ for any ideal $I$ of $R$ containing $Ann_R(M)$. Therefore, $\varphi$ is continuous.
\end{proposition}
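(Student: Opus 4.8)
The plan is to prove the set equality $\varphi^{-1}(V^{\overline{R}}(\overline{I}))=\nu^s(Ann_M(I))$ directly, by unwinding both sides to a single annihilator containment in $R$, and then to deduce continuity from the fact that the closed sets of $Spec(\overline{R})$ are exactly the $V^{\overline{R}}(\overline{I})$. First I would fix an arbitrary $K\in Spec^L(M)$ and translate the membership $K\in\varphi^{-1}(V^{\overline{R}}(\overline{I}))$ using the definition $\varphi(K)=\sqrt{Ann_R(K)}/Ann_R(M)$. So $K$ lies in the preimage if and only if $\overline{\sqrt{Ann_R(K)}}\in V^{\overline{R}}(\overline{I})$, which by the definition of $V^{\overline{R}}$ means $\overline{I}\subseteq\sqrt{Ann_R(K)}/Ann_R(M)$. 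Since $I$ and $\sqrt{Ann_R(K)}$ both contain $Ann_R(M)$ (the latter because $K\in Spec^L(M)$ forces $\sqrt{Ann_R(K)}\in Spec(R)$ and $Ann_R(M)\subseteq Ann_R(K)\subseteq\sqrt{Ann_R(K)}$), the correspondence theorem gives that this containment of quotients is equivalent to $I\subseteq\sqrt{Ann_R(K)}$ in $R$.

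The second half is to show that $I\subseteq\sqrt{Ann_R(K)}$ is in turn equivalent to $K\in\nu^s(Ann_M(I))$, i.e. to $Ann_R(Ann_M(I))\subseteq\sqrt{Ann_R(K)}$. For the forward direction, I would invoke the general inclusion $I\subseteq Ann_R(Ann_M(I))$ together with $\sqrt{Ann_R(K)}=Ann_R(soc(K))$, but the cleanest route is to apply Lemma \ref{Lemma 2.4}(5): that lemma shows $\nu^s(Ann_M(I))=\nu^{s*}(Ann_M(I))$, and the membership $K\in\nu^{s*}(Ann_M(I))$ reads $soc(K)\subseteq Ann_M(I)$. Thus I can argue the equivalence $I\subseteq\sqrt{Ann_R(K)}\Leftrightarrow soc(K)\subseteq Ann_M(I)$ instead, which is symmetric: from $I\subseteq\sqrt{Ann_R(K)}=Ann_R(soc(K))$ one gets $soc(K)\subseteq Ann_M(Ann_R(soc(K)))\subseteq Ann_M(I)$, and conversely $soc(K)\subseteq Ann_M(I)$ yields $I\subseteq Ann_R(Ann_M(I))\subseteq Ann_R(soc(K))=\sqrt{Ann_R(K)}$. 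This is exactly the computation already carried out inside the proof of Lemma \ref{Lemma 2.4}(5), so I would cite that lemma to keep the argument short.

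Chaining the two equivalences gives $K\in\varphi^{-1}(V^{\overline{R}}(\overline{I}))\Leftrightarrow K\in\nu^s(Ann_M(I))$ for every $K$, which is the desired set equality. For continuity, the final step is to observe that the closed subsets of $Spec(\overline{R})$ in the Zariski topology are precisely the sets $V^{\overline{R}}(\overline{I})$ as $\overline{I}$ ranges over the ideals of $\overline{R}$, equivalently as $I$ ranges over ideals of $R$ containing $Ann_R(M)$; since each such preimage $\varphi^{-1}(V^{\overline{R}}(\overline{I}))=\nu^s(Ann_M(I))$ is an $\mathcal{SL}$-closed set by Theorem \ref{Theorem 2.3}, the map $\varphi$ pulls closed sets back to closed sets and is therefore continuous. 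I expect the main obstacle to be purely bookkeeping rather than conceptual: one must be careful that the quotient containment $\overline{I}\subseteq\overline{\sqrt{Ann_R(K)}}$ faithfully reflects the containment in $R$ (which needs both ideals to contain $Ann_R(M)$), and one must remember to use $\sqrt{Ann_R(K)}=Ann_R(soc(K))$, the defining property of $Spec^L(M)$, at the point where the socle enters. No genuinely hard step arises.
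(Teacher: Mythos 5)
Your proposal is correct and follows essentially the same route as the paper: a pointwise chain of equivalences for $K\in Spec^L(M)$, passing from $\overline{I}\subseteq\overline{\sqrt{Ann_R(K)}}$ to $I\subseteq\sqrt{Ann_R(K)}=Ann_R(soc(K))$, then to $soc(K)\subseteq Ann_M(I)$, and finally to $Ann_R(Ann_M(I))\subseteq\sqrt{Ann_R(K)}$, with continuity deduced from closedness of the preimages. The only cosmetic difference is that you cite Lemma \ref{Lemma 2.4}(5) for the middle equivalence, whereas the paper inlines the same annihilator computation; your explicit remark that the quotient containment reflects faithfully because both ideals contain $Ann_R(M)$ is a detail the paper leaves implicit.
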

\begin{proof}
For any $K\in Spec^L(M)$, we have $K\in\varphi^{-1}(V^{\overline{R}}(\overline{I}))\Leftrightarrow \varphi(K)=\overline{\sqrt{Ann_R(K)}}\in V^{\overline{R}}(\overline{I})\Leftrightarrow\overline{I}\subseteq \overline{\sqrt{Ann_R(K)}}\Leftrightarrow I\subseteq \sqrt{Ann_R(K)}=Ann_R(soc(K))\Leftrightarrow soc(K)\subseteq Ann_M(I)\Leftrightarrow Ann_R(Ann_M(I))\subseteq Ann_R(soc(K))\Leftrightarrow K\in\nu^s(Ann_M(I))$. Therefore, $\varphi^{-1}(V^{\overline{R}})=\nu^s(Ann_M(I))$.
\end{proof}
\begin{proposition}\label{Proposition 2.10}
Let $M$ be an $R$-module. If $\varphi$ is surjective, then $\varphi$ is both closed and open. In particular, $\varphi(\nu^s(N))=V^{\overline{R}}(\overline{Ann_R(N)})$ and $\varphi(Spec^L(M)-\nu^s(N))=Spec(\overline{R})-V^{\overline{R}}(\overline{Ann_R(N)})$ for any submodule $N$ of $M$.
\end{proposition}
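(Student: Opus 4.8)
The plan is to reduce both the closedness and the openness of $\varphi$ to a single preimage computation, after which surjectivity does all the remaining work through the elementary set-theoretic identities $f(f^{-1}(A))=A$ (valid for surjective $f$) and $f^{-1}(Y-A)=X-f^{-1}(A)$ (valid for any map). In this way the topological content is entirely isolated in one formula.

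First I would establish the key identity $\varphi^{-1}(V^{\overline{R}}(\overline{Ann_R(N)}))=\nu^s(N)$ for every submodule $N$ of $M$. This follows by applying Proposition \ref{Proposition 2.9} to the ideal $I=Ann_R(N)$, which contains $Ann_R(M)$ because $N\leq M$; this gives $\varphi^{-1}(V^{\overline{R}}(\overline{Ann_R(N)}))=\nu^s(Ann_M(Ann_R(N)))$. Combining this with Lemma \ref{Lemma 2.4}(6), which yields $\nu^s(Ann_M(Ann_R(N)))=\nu^s(N)$, produces the claimed equality.

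With this identity in hand, the closedness of $\varphi$ is immediate. Since the closed subsets of $Spec^L(M)$ are exactly the sets $\nu^s(N)$, and since surjectivity gives $\varphi(\varphi^{-1}(A))=A$, I would compute
\[
\varphi(\nu^s(N))=\varphi(\varphi^{-1}(V^{\overline{R}}(\overline{Ann_R(N)})))=V^{\overline{R}}(\overline{Ann_R(N)}),
\]
which is a closed set, so $\varphi$ is closed. For openness, every open subset of $Spec^L(M)$ is the complement $Spec^L(M)-\nu^s(N)$ of a closed set. Using $\varphi^{-1}(Spec(\overline{R})-A)=Spec^L(M)-\varphi^{-1}(A)$ with $A=V^{\overline{R}}(\overline{Ann_R(N)})$, the key identity rewrites $Spec^L(M)-\nu^s(N)$ as $\varphi^{-1}(Spec(\overline{R})-V^{\overline{R}}(\overline{Ann_R(N)}))$, and applying surjectivity once more yields
\[
\varphi(Spec^L(M)-\nu^s(N))=Spec(\overline{R})-V^{\overline{R}}(\overline{Ann_R(N)}),
\]
which is open; hence $\varphi$ is open.

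The only genuine content lies in the preimage identity, and even that is assembled directly from Proposition \ref{Proposition 2.9} and Lemma \ref{Lemma 2.4}(6). I expect no real obstacle beyond bookkeeping: keeping track of the passage between $Ann_R(N)$ and $\sqrt{Ann_R(K)}$ modulo $Ann_R(M)$, and recalling that it is precisely surjectivity that upgrades the always-valid inclusion $Y-f(A)\subseteq f(X-A)$ to the needed equalities. This mirrors the proof of Lemma \ref{Lemma 2.8}(2) for the natural map $\psi$.
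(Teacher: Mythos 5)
Your proposal is correct and matches the paper's own proof essentially step for step: both establish the key identity $\varphi^{-1}(V^{\overline{R}}(\overline{Ann_R(N)}))=\nu^s(Ann_M(Ann_R(N)))=\nu^s(N)$ via Proposition \ref{Proposition 2.9} and Lemma \ref{Lemma 2.4}(6), and then invoke surjectivity of $\varphi$ to pass from preimage identities to the two image formulas for closedness and openness. No gaps; your explicit remark that $Ann_R(N)\supseteq Ann_R(M)$ justifies applying Proposition \ref{Proposition 2.9} is a small clarification the paper leaves implicit.
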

\begin{proof}
Using Proposition \ref{Proposition 2.9}, we obtain $\varphi^{-1}(V^{\overline{R}}(\overline{I}))=\nu^s(Ann_M(I))$ for any ideal $I$ of $R$ containing $Ann_R(M)$ which follows that $\varphi^{-1}(V^{\overline{R}}(\overline{Ann_R(N)}))=\nu^s(Ann_M(Ann_R(N)))=\nu^s(N)$ by Lemma \ref{Lemma 2.4}(6). Since $\varphi$ is surjective, then $\varphi(\nu^s(N))=V^{\overline{R}}(\overline{Ann_R(N)})$. Hence $\varphi$ is closed. To show that $\varphi$ is an open map, note that $Spec^L(M)-\nu^s(N)=Spec^L(M)-\varphi^{-1}(V^{\overline{R}}(\overline{Ann_R(N)}))=\varphi^{-1}(Spec(\overline{R})-V^{\overline{R}}(\overline{Ann_R(N)}))$. Again, by the surjectivity of $\varphi$, we obtain $\varphi(Spec^L(M)-\nu^s(N))=Spec(\overline{R})-V^{\overline{R}}(\overline{Ann_R(N)})$, as desired.
\end{proof}
The proof of the following result is straightforward using Proposition \ref{Proposition 2.9} and Proposition \ref{Proposition 2.10}.
\begin{corollary}\label{corollary 2.11}
Let $M$ be an $R$-module. Then $\varphi$ is bijective if and only if $\varphi$ is a homeomorphism.
\end{corollary}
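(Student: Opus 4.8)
The plan is to prove the two implications separately, exploiting the fact that a homeomorphism is by definition a continuous bijection whose inverse is also continuous, together with the standard topological observation that a continuous bijection is already a homeomorphism as soon as it is an open (equivalently, closed) map. The entire content of the corollary is thus an assembly of the two preceding propositions, so I expect no genuine obstacle; the only step worth spelling out is why openness of a continuous bijection upgrades it to a homeomorphism.

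For the forward implication I would do essentially nothing: if $\varphi$ is a homeomorphism, then it is in particular a bijection by the very definition of a homeomorphism, which is all that is claimed.

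For the converse, suppose $\varphi$ is bijective. First I would record that $\varphi$ is continuous; this is exactly the conclusion of Proposition \ref{Proposition 2.9}, where the identity $\varphi^{-1}(V^{\overline{R}}(\overline{I}))=\nu^s(Ann_M(I))$ shows that the preimage of each basic closed set is closed. Next, since a bijection is in particular surjective, Proposition \ref{Proposition 2.10} applies and yields that $\varphi$ is an open map (indeed both open and closed), with $\varphi(Spec^L(M)-\nu^s(N))=Spec(\overline{R})-V^{\overline{R}}(\overline{Ann_R(N)})$ for every submodule $N$.

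It then remains to deduce continuity of $\varphi^{-1}$. Because $\varphi$ is a bijection, for any open set $U\subseteq Spec^L(M)$ the preimage of $U$ under $\varphi^{-1}$ coincides with the forward image $\varphi(U)$; openness of $\varphi$ makes $\varphi(U)$ open, so $\varphi^{-1}$ is continuous. Combining these facts, $\varphi$ is a continuous bijection whose inverse is continuous, i.e. a homeomorphism, which completes the \emph{if} direction and hence the proof. The single point deserving explicit mention is this identification of $(\varphi^{-1})^{-1}(U)$ with $\varphi(U)$, which is precisely what turns the openness supplied by Proposition \ref{Proposition 2.10} into continuity of the inverse.
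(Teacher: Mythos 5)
Your proof is correct and matches the paper's intended argument exactly: the paper dismisses this corollary as "straightforward using Proposition \ref{Proposition 2.9} and Proposition \ref{Proposition 2.10}," which is precisely your assembly of continuity from Proposition \ref{Proposition 2.9} and openness/closedness from Proposition \ref{Proposition 2.10}, with the standard identification $(\varphi^{-1})^{-1}(U)=\varphi(U)$ giving continuity of the inverse. No gaps; you have merely written out the routine details the paper omits.
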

In the following Theorem, we relate the connectedness of $Spec^s(M)$ with the connectedness of both $Spec(\overline{R})$ and $Spec^s(M)$ for any $R$-module $M$ using the properties of $\varphi$ and $\psi$ that are stated in Lemma \ref{Lemma 2.8}, Proposition \ref{Proposition 2.9} and Proposition \ref{Proposition 2.10}.
\begin{theorem}\label{theorem 2.12}
Let $M$ be an $R$-module and consider the following statements:
\begin{enumerate}
\item $Spec^s(M)$ is connected.
\item $Spec^L(M)$ is connected.
\item $Spec(\overline{R})$ is connected.
\item $\overline{0}$ and $\overline{1}$ are the only idempotent elements of $\overline{R}$.
\end{enumerate}
(i) If $\varphi$ is surjective, then (1)$\Rightarrow$(2)$\Leftrightarrow$(3)$\Leftrightarrow$(4).\\
(ii) If $\psi$ is surjective, then all the four statements are equivalent.
\end{theorem}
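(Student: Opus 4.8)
The plan is to treat the two parts together, building part (ii) on part (i). I would first dispose of (3)$\Leftrightarrow$(4), which is the classical fact that $Spec(A)$ is connected if and only if the only idempotents of $A$ are $0$ and $1$ (see \cite{atiyah1969introduction}), applied verbatim to $A=\overline{R}$. For (2)$\Rightarrow$(3), I would use that $\varphi$ is continuous (Proposition \ref{Proposition 2.9}) and surjective, so that $Spec(\overline{R})=\varphi(Spec^L(M))$ is the continuous image of a connected space, hence connected.

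The heart of part (i) is (3)$\Rightarrow$(2), which I would prove contrapositively. Suppose $Spec^L(M)=A\sqcup B$ with $A,B$ nonempty, disjoint and clopen; writing each as a variety gives $A=\nu^s(N_1)$ and $B=\nu^s(N_2)$. The subtle point — and the step I expect to be the main obstacle — is that, although $\varphi$ is open and closed (Proposition \ref{Proposition 2.10}), a continuous open-closed surjection does \emph{not} in general reflect connectedness, because the images $\varphi(A)$ and $\varphi(B)$ need not be disjoint even when $A\cap B=\emptyset$. To force disjointness I would exploit surjectivity algebraically: from $A\cap B=\emptyset$ together with Theorem \ref{Theorem 2.3}(2) and $Ann_M(Ann_R(N_1))\cap Ann_M(Ann_R(N_2))=Ann_M(Ann_R(N_1)+Ann_R(N_2))$ one gets $\nu^s(Ann_M(Ann_R(N_1)+Ann_R(N_2)))=\emptyset$; since $\nu^s(Ann_M(I))=\varphi^{-1}(V^{\overline{R}}(\overline{I}))$ (Proposition \ref{Proposition 2.9}) and $\varphi$ is onto, this empties $V^{\overline{R}}(\overline{Ann_R(N_1)+Ann_R(N_2)})$, i.e. $Ann_R(N_1)+Ann_R(N_2)=R$. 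Then, by Proposition \ref{Proposition 2.10}, $\varphi(A)\cap\varphi(B)=V^{\overline{R}}(\overline{Ann_R(N_1)})\cap V^{\overline{R}}(\overline{Ann_R(N_2)})=V^{\overline{R}}(\overline{R})=\emptyset$, while $\varphi(A)\cup\varphi(B)=Spec(\overline{R})$ by surjectivity and both are clopen and nonempty. This disconnects $Spec(\overline{R})$, contradicting (3).

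For (1)$\Rightarrow$(2), I would show that $Spec^s(M)$ is \emph{dense} in $Spec^L(M)$ and invoke the elementary fact that a space with a connected dense subspace is connected. Density follows because the smallest closed set containing $Spec^s(M)$ is $\nu^s(soc(M))$, and every $K\in Spec^L(M)$ lies in it: $soc(K)\subseteq soc(M)$ gives $Ann_R(soc(M))\subseteq Ann_R(soc(K))=\sqrt{Ann_R(K)}$, so $K\in\nu^s(soc(M))$. Note this direction does not even need surjectivity of $\varphi$.

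Finally, for part (ii) I would first observe that surjectivity of $\psi$ implies that of $\varphi$: any second submodule $S$ has $Ann_R(S)$ prime, so $\varphi(S)=\psi(S)$, whence $Spec(\overline{R})=\psi(Spec^s(M))\subseteq\varphi(Spec^L(M))$. Thus all implications of part (i) are available, and it remains only to close the cycle with (3)$\Rightarrow$(1). I would prove this by the argument of the crux above, now with $\psi$ in place of $\varphi$: a clopen splitting $Spec^s(M)=V^s(N_1)\sqcup V^s(N_2)$ forces $Ann_R(N_1)+Ann_R(N_2)=R$ via Lemma \ref{Lemma 2.8}(1) and the surjectivity of $\psi$, and then Lemma \ref{Lemma 2.8}(2) makes $\psi(V^s(N_1))$ and $\psi(V^s(N_2))$ a clopen disconnection of $Spec(\overline{R})$. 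Combined with (1)$\Rightarrow$(2)$\Leftrightarrow$(3), this yields the equivalence of all four statements.
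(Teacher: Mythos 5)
Your proposal is correct, and on two of the key implications it takes a genuinely different route from the paper. For (1)$\Rightarrow$(2) the paper argues by contradiction: it takes a clopen $W=Spec^L(M)-\nu^s(N)=\nu^s(N^\prime)$, pushes it forward by $\varphi$ (via Proposition \ref{Proposition 2.10}, so this direction \emph{uses} the surjectivity of $\varphi$), pulls back along $\psi$, and rules out both cases $\psi^{-1}(\varphi(W))=\emptyset$ and $\psi^{-1}(\varphi(W))=Spec^s(M)$ by producing a second submodule inside a suitable $K\in Spec^L(M)$ (possible since $soc(K)\neq\{0\}$). Your density argument packages the same underlying phenomenon much more cleanly: since Lemma \ref{Lemma 2.4}(1) identifies the Zariski topology on $Spec^s(M)$ with the subspace topology, and since any closed set $\nu^s(N)\supseteq Spec^s(M)$ satisfies $Ann_R(N)\subseteq\bigcap_{S\in Spec^s(M)}Ann_R(S)=Ann_R(soc(M))\subseteq Ann_R(soc(K))=\sqrt{Ann_R(K)}$ for every $K\in Spec^L(M)$ — this one-line verification is the only detail left implicit in your claim that $\nu^s(soc(M))$ is the smallest closed set containing $Spec^s(M)$ — density follows, and the closure of a connected subspace is connected; notably your version of (1)$\Rightarrow$(2) needs no surjectivity of $\varphi$ at all, so it slightly strengthens the theorem as stated. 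For (3)$\Rightarrow$(2), the paper sidesteps your disjointness worry differently: it applies connectedness of $Spec(\overline{R})$ directly to the single clopen set $\varphi(W)$, deriving contradictions from $\varphi(W)=\emptyset$ and $\varphi(W)=Spec(\overline{R})$ via Propositions \ref{Proposition 2.9} and \ref{Proposition 2.10}; that is shorter than, though no better than, your correct computation forcing $Ann_R(N_1)+Ann_R(N_2)=R$ and hence $\varphi(A)\cap\varphi(B)=V^{\overline{R}}(\overline{R})=\emptyset$. Finally, in part (ii) the paper closes the cycle by quoting (4)$\Rightarrow$(1) from \cite[Corollary 3.13]{ansari2014zariski}, whereas you prove (3)$\Rightarrow$(1) from scratch using Lemma \ref{Lemma 2.8}; your argument is sound (the needed identity $V^s(N_1)\cap V^s(N_2)=\psi^{-1}(V^{\overline{R}}(\overline{Ann_R(N_1)+Ann_R(N_2)}))$ holds because a second submodule $S$ with $Ann_R(N_1)+Ann_R(N_2)\subseteq Ann_R(S)$ satisfies $S\subseteq Ann_M(Ann_R(N_1)+Ann_R(N_2))$) and makes the proof self-contained at the cost of some length. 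The remaining pieces — (2)$\Rightarrow$(3) by continuity plus surjectivity, (3)$\Leftrightarrow$(4) by the classical idempotent criterion, and the observation that surjectivity of $\psi$ implies that of $\varphi$ since $\varphi(S)=\psi(S)$ for second $S$ — coincide with the paper's.
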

\begin{proof} (i) (1)$\Rightarrow$(2): Suppose that $Spec^s(M)$ is connected. If $Spec^L(M)$ is disconnected, then there exists $W$ clopen in $Spec^L(M)$ such that $W\neq Spec^L(M)$ and $W\neq\emptyset$. Since $W$ is clopen in $Spec^L(M)$, then $W=Spec^L(M)-\nu^s(N)=\nu^s(N^\prime)$ for some submodules $N$ and $N^\prime$ of $M$. By Proposition \ref{Proposition 2.10},  $\varphi(W)$ is clopen in $Spec(\overline{R})$. But by Lemma \ref{Lemma 2.8}(1), $\psi$ is continuous. So $\psi^{-1}(\varphi(W))$ is clopen in $Spec^s(M)$ and thus $\psi^{-1}(\varphi(W))=\emptyset$ or $\psi^{-1}(\varphi(W))=Spec^s(M)$. If $\psi^{-1}(\varphi(W))=\emptyset$, then $\psi^{-1}(\varphi(\nu^s(N^\prime)))=\emptyset$ and hence $\psi^{-1}(V^{\overline{R}}(\overline{Ann_R(N^\prime)}))=\emptyset$ which follows that $V^s(N^\prime)=V^s(Ann_M(Ann_R(N^\prime)))=\emptyset$ using \cite[Lemma 3.3(b)]{farshadifar2013modules} and Lemma \ref{Lemma 2.8}(1). This implies that $Ann_R(N^\prime)\nsubseteq Ann_R(S)$ for any $S\in Spec^s(M)$. Since $W=\nu^s(N^\prime)\neq\emptyset$, then there exists $K\in Spec^L(M)$ such that $Ann_R(N^\prime)\subseteq\sqrt{Ann_R(K)}=Ann_R(soc(K))$. As $soc(K)\neq\{0\}$, then $\exists S^\prime\in Spec^s(M)$ such that $S^\prime\subseteq K$. Hence $\sqrt{Ann_R(K)}\subseteq\sqrt{Ann_R(S^\prime)}=Ann_R(S^\prime)\in Spec(R)$ and thus $Ann_R(N^\prime)\subseteq Ann_R(S^\prime)$, a contradiction. Now, if $\psi^{-1}(\varphi(W))=Spec^s(M)$, then $Spec^s(M)=\psi^{-1}(\varphi(\nu^s(N^\prime)))=\psi^{-1}(V^{\overline{R}}(\overline{Ann_R(N^\prime)}))=V^s(Ann_M(Ann_R(N^\prime)))=V^s(N^\prime)$ by Lemma \ref{Lemma 2.8}(1) and \cite[Lemma 3.3(b)]{farshadifar2013modules}, again. But by Lemma \ref{Lemma 2.4}(1), we have $V^s(N^\prime)=Spec^s(M)\cap\nu^s(N^\prime)$ which follows that $Spec^s(M)\subseteq\nu^s(N^\prime)=W=Spec^L(M)-\nu^s(N)$. Since $W=Spec^L(M)-\nu^s(N)\neq Spec^L(M)$, then $\exists K\in Spec^L(M)$ such that $Ann_R(N)\subseteq\sqrt{Ann_R(K)}$. As $Soc(K)\neq\{0\}$, then $\exists S\in Spec^s(M)$ such that $S\subseteq K$. So we have $Ann_R(N)\subseteq\sqrt{Ann_R(K)}\subseteq\sqrt{Ann_R(S)}$ and so $S\in\nu^s(N)$. But $S\in Spec^s(M)\subseteq Spec^L(M)-\nu^s(N)$. Therefore $S\notin\nu^s(N)$, a contradiction. Hence $Spec^L(M)$ is connected.\\
(2)$\Rightarrow$(3): Follows from the fact that the continuous image of a connected space is also connected.\\
(3)$\Rightarrow$(2): Suppose the contrary, i.e. $Spec^L(M)$ is disconnected. Then there exists a clopen set $W$ in $Spec^L(M)$ such that $W\neq\emptyset$ and $W\neq Spec^L(M)$.  By Proposition \ref{Proposition 2.10}, $\varphi(W)$ is clopen in $Spec(\overline{R})$ which is a connected space by the assumption (3). This implies that $\varphi(W)=Spec(\overline{R})$ or $\varphi(W)=\emptyset$. Since $W$ is open in $Spec^L(M)$, then $W=Spec^L(M)-\nu^s(N)$ for some $N\leq M$. Note that if $\varphi(W)=Spec(\overline{R})$, then $Spec(\overline{R})=\varphi(Spec^L(M)-\nu^s(N))=Spec(\overline{R})-V^{\overline{R}}(\overline{Ann_R(N)})$. Thus $V^{\overline{R}}(\overline{Ann_R(N)})=\emptyset$ and hence $\emptyset=\varphi^{-1}(\emptyset)=\varphi^{-1}(V^{\overline{R}}(\overline{Ann_R(N)}))=\nu^s(N)$ by Proposition \ref{Proposition 2.9}. This implies that $\nu^s(N)=\emptyset$ and hence $W=Spec^L(M)$, a contradiction. Now, if $\varphi(W)=\emptyset$, then $W\subseteq\varphi^{-1}(\varphi(W))=\emptyset$ and thus $W=\emptyset$ which is a contradiction. Consequently, $Spec^L(M)$ is connected.\\
The equivalence of (3) and (4) follows from \cite[p.132, Corollary 2 to Proposition 15]{bourbakialgebre}.\\
(ii) It is easy to see that if $\psi$ is surjective, then $\varphi$ is surjective and thus (1)$\Rightarrow$(2)$\Leftrightarrow$(3)$\Leftrightarrow$(4) by part (i). To complete the proof, it is enough to show that (4)$\Rightarrow$(1). But this follows from \cite[Corollary 3.13]{ansari2014zariski}.
\end{proof}
Let $f:M\rightarrow M^\prime$ be a module monomorphism of $R$-modules. Let $N\leq M$ and $N^\prime\leq M^\prime$ such that $N^\prime\subseteq f(M)$. By Lemma \cite[Lemma 2.19]{ansari20162}, we have $f(soc(N))=soc(f(N))$ and $f^{-1}(soc(N^\prime))=soc(f^{-1}(N^\prime))$. In addition, it is easy to see that $Ann_R(N)=Ann_R(f(N))$ and $Ann_R(N^\prime)=Ann_R(f^{-1}(N^\prime))$. Now, we use these assertions to prove the next two results.
\begin{lemma} \label{Lemma 2.13}
Let $f:M\rightarrow M^\prime$ be a module monomrphism of $R$-modules. Then the following hold:
\begin{enumerate}
\item If $N^\prime\in Spec^L(M^\prime)$ such that $N^\prime\subseteq f(M)$, then $f^{-1}(N^\prime)\in Spec^L(M)$.
\item If $N\in Spec^L(M)$, then $f(N)\in Spec^L(M^\prime)$.
\end{enumerate}
\end{lemma}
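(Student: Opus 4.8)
The plan is to verify, for each part, the two defining requirements for membership in $Spec^L$: that the submodule in question is a non-zero secondary submodule, and that it satisfies the annihilator condition $Ann_R(soc(\,\cdot\,))=\sqrt{Ann_R(\,\cdot\,)}$. Throughout I would lean on the four identities recalled immediately before the statement, namely $f(soc(N))=soc(f(N))$, $f^{-1}(soc(N^\prime))=soc(f^{-1}(N^\prime))$ (valid since $N^\prime\subseteq f(M)$), $Ann_R(N)=Ann_R(f(N))$, and $Ann_R(N^\prime)=Ann_R(f^{-1}(N^\prime))$.

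For the secondary property I would first observe that, since $f$ is a monomorphism, its restriction $f|_N\colon N\to f(N)$ is an $R$-module isomorphism, and likewise, because $N^\prime\subseteq f(M)$, the restriction $f\colon f^{-1}(N^\prime)\to N^\prime$ is an $R$-module isomorphism. As being a non-zero secondary submodule is an invariant of the isomorphism type of the module, $f(N)$ is secondary whenever $N$ is, and $f^{-1}(N^\prime)$ is secondary whenever $N^\prime$ is.

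For part (1), starting from $N^\prime\in Spec^L(M^\prime)$ with $N^\prime\subseteq f(M)$, I would compute $Ann_R(soc(f^{-1}(N^\prime)))$. Using $soc(f^{-1}(N^\prime))=f^{-1}(soc(N^\prime))$ and then the annihilator identity applied to the submodule $soc(N^\prime)\subseteq N^\prime\subseteq f(M)$ gives $Ann_R(soc(f^{-1}(N^\prime)))=Ann_R(soc(N^\prime))$; since $N^\prime\in Spec^L(M^\prime)$ this equals $\sqrt{Ann_R(N^\prime)}$, and finally $Ann_R(N^\prime)=Ann_R(f^{-1}(N^\prime))$ yields $\sqrt{Ann_R(N^\prime)}=\sqrt{Ann_R(f^{-1}(N^\prime))}$. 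Chaining these equalities gives exactly $Ann_R(soc(f^{-1}(N^\prime)))=\sqrt{Ann_R(f^{-1}(N^\prime))}$, so $f^{-1}(N^\prime)\in Spec^L(M)$. Part (2) is symmetric: from $N\in Spec^L(M)$, using $soc(f(N))=f(soc(N))$, the chain $Ann_R(f(soc(N)))=Ann_R(soc(N))=\sqrt{Ann_R(N)}$, and $Ann_R(N)=Ann_R(f(N))$, I obtain $Ann_R(soc(f(N)))=\sqrt{Ann_R(f(N))}$, whence $f(N)\in Spec^L(M^\prime)$.

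The computations are routine once the bookkeeping is set up, so the only place demanding care — the main obstacle, such as it is — is ensuring each preliminary identity is invoked under the hypothesis that validates it. In particular, the identity $f^{-1}(soc(N^\prime))=soc(f^{-1}(N^\prime))$ and the annihilator transfer $Ann_R(soc(N^\prime))=Ann_R(f^{-1}(soc(N^\prime)))$ require the relevant submodules to lie inside $f(M)$; in part (1) this is guaranteed by the standing hypothesis $N^\prime\subseteq f(M)$ together with $soc(N^\prime)\subseteq N^\prime$, and it is precisely this inclusion that also makes the restriction of $f$ onto $N^\prime$ surjective, legitimizing the transfer of the secondary property.
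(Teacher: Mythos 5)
Your proposal is correct and follows essentially the same route as the paper's proof: both verify the two defining conditions for membership in $Spec^L$ using the four identities recalled before the statement, and your chains of annihilator equalities for parts (1) and (2) are exactly those in the paper. The only cosmetic difference is that you transfer the secondary property via the isomorphisms $N\cong f(N)$ and $f^{-1}(N^\prime)\cong N^\prime$ (legitimate, since being a non-zero secondary module is intrinsic), whereas the paper checks it directly, e.g.\ via $rf^{-1}(N^\prime)=f^{-1}(rN^\prime)$.
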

\begin{proof}
(1) Since $N^\prime\neq\{0_{M^\prime}\}$ and $N^\prime\subseteq f(M)$, then $f^{-1}(N^\prime)\neq\{0_M\}$. Now, for any $r\in R$, we have $rN^\prime=N^\prime$ or $r\in\sqrt{Ann_R(N^\prime)}$ as $N^\prime$ is a secondary submodule of $M^\prime$. Thus $rf^{-1}(N^\prime)=f^{-1}(rN^\prime)=f^{-1}(N^\prime)$ or $r\in\sqrt{Ann_R(N^\prime)}=\sqrt{Ann_R(f^{-1}(N^\prime)})$. This implies that $f^{-1}(N^\prime)$ is a secondary submodule of $M$. Since $Ann_R(soc(N^\prime))=\sqrt{Ann_R(N^\prime)}$ and $N^\prime\subseteq f(M)$, then $Ann_R(soc(f^{-1}(N^\prime)))=Ann_R(f^{-1}(soc(N^\prime)))=Ann_R(soc(N^\prime))=\sqrt{Ann_R(N^\prime)}=\sqrt{Ann_R(f^{-1}(N^\prime))}$. This means that $f^{-1}(N^\prime)\in Spec^L(M)$, as desired.\\
(2) It is easy to check that $f(N)$ is a secondary submodule of $M^\prime$. Now, since $N\in Spec^L(M)$, then $Ann_R(soc(N))=\sqrt{Ann_R(N)}$ which follows that $Ann_R(soc(f(N)))=Ann_R(f(soc(N)))=Ann_R(soc(N))=\sqrt{Ann_R(N)}=\sqrt{Ann_R(f(N))}$. Therefore $f(N)\in Spec^L(M^\prime)$, as needed.
\end{proof}
\begin{theorem} \label{Theorem 2.14}
Let $f:M\rightarrow M^\prime$ be a module monomorphism of $R$-modules. Then the map $\rho:Spec^L(M)\rightarrow Spec^L(M^\prime)$ defined by $\rho(K)=f(K)$ is an injective continuous map. Moreover, if $\rho$ is surjective, then $Spec^L(M)$ is homeomorphic to $Spec^L(M^\prime)$.
\end{theorem}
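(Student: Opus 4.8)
The plan is to verify in turn that $\rho$ is well-defined, injective, and continuous, and then to upgrade a surjective such $\rho$ to a homeomorphism by showing it is closed. Well-definedness is immediate from Lemma \ref{Lemma 2.13}(2): if $K\in Spec^L(M)$ then $f(K)\in Spec^L(M^\prime)$, so $\rho(K)$ indeed lands in $Spec^L(M^\prime)$. Injectivity is equally quick: since $f$ is a monomorphism we have $K=f^{-1}(f(K))$ for every submodule $K$ of $M$, so $f(K)=f(K^\prime)$ forces $K=K^\prime$.

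For continuity it suffices to check that $\rho^{-1}(\nu^s(N^\prime))$ is closed for each submodule $N^\prime$ of $M^\prime$, since by Theorem \ref{Theorem 2.3} the sets $\nu^s(N^\prime)$ are exactly the closed sets of $Spec^L(M^\prime)$. Unwinding the definition and using the identity $Ann_R(f(K))=Ann_R(K)$ recorded before Lemma \ref{Lemma 2.13}, I would show that $K\in\rho^{-1}(\nu^s(N^\prime))$ is equivalent to $Ann_R(N^\prime)\subseteq\sqrt{Ann_R(K)}$. The aim is then to rewrite this as a variety in $M$: setting $I=Ann_R(N^\prime)$, the containment $I\subseteq\sqrt{Ann_R(K)}=Ann_R(soc(K))$ is equivalent to $soc(K)\subseteq Ann_M(I)$, and hence, on taking annihilators and using $I\subseteq Ann_R(Ann_M(I))$, equivalent to $Ann_R(Ann_M(I))\subseteq\sqrt{Ann_R(K)}$. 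This is precisely the computation underlying Proposition \ref{Proposition 2.9}, and it yields $\rho^{-1}(\nu^s(N^\prime))=\nu^s(Ann_M(Ann_R(N^\prime)))$, a closed set. Thus $\rho$ is continuous.

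Finally, assume $\rho$ is surjective; it is then a continuous bijection, and to conclude that it is a homeomorphism I would show it is closed, i.e. that $\rho(\nu^s(N))$ is closed in $Spec^L(M^\prime)$ for every $N\leq M$. The clean statement to aim for is $\rho(\nu^s(N))=\nu^s(f(N))$. The inclusion $\subseteq$ uses only $Ann_R(f(K))=Ann_R(K)$ and $Ann_R(f(N))=Ann_R(N)$: if $K\in\nu^s(N)$ then $Ann_R(f(N))=Ann_R(N)\subseteq\sqrt{Ann_R(K)}=\sqrt{Ann_R(f(K))}$. The reverse inclusion $\supseteq$ is where surjectivity is essential: given $K^\prime\in\nu^s(f(N))$, surjectivity supplies $K\in Spec^L(M)$ with $f(K)=K^\prime$, and then $Ann_R(N)=Ann_R(f(N))\subseteq\sqrt{Ann_R(K^\prime)}=\sqrt{Ann_R(K)}$ places $K$ in $\nu^s(N)$, so $K^\prime=\rho(K)\in\rho(\nu^s(N))$. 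Hence $\rho$ is a closed continuous bijection, and therefore a homeomorphism.

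The one genuinely delicate point, and the step I would treat most carefully, is the passage through annihilators in the continuity argument: the equivalence between $I\subseteq\sqrt{Ann_R(K)}$ and $Ann_R(Ann_M(I))\subseteq\sqrt{Ann_R(K)}$ relies crucially on the defining property $\sqrt{Ann_R(K)}=Ann_R(soc(K))$ of the elements of $Spec^L(M)$, without which the preimage would not obviously be a variety. The surjectivity hypothesis then enters only at the single inclusion above, to guarantee that every element of $Spec^L(M^\prime)$ is of the form $f(K)$ so that the annihilator condition can be transported back to $M$.
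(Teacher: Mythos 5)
Your proposal is correct and follows essentially the same route as the paper: well-definedness via Lemma \ref{Lemma 2.13}(2), injectivity from the monomorphism, continuity by computing $\rho^{-1}(\nu^s(N^\prime))$ as a variety in $Spec^L(M)$ using the annihilator-transport identities and the defining condition $\sqrt{Ann_R(K)}=Ann_R(soc(K))$, and closedness via $\rho(\nu^s(N))=\nu^s(f(N))$ under surjectivity. The only (cosmetic) difference is that the paper passes through $\nu^{s*}$ and the socle identity $soc(f(K))=f(soc(K))$ to land on $\nu^s(Ann_M(\sqrt{Ann_R(N^\prime)}))$, whereas you land on $\nu^s(Ann_M(Ann_R(N^\prime)))$ via the computation of Proposition \ref{Proposition 2.9}; these coincide by Lemma \ref{Lemma 2.4}(5).
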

\begin{proof}
By Lemma \ref{Lemma 2.13}(2), $\rho$ is well-defined. In addition, it can easily be checked that $\rho$ is injective. By Lemma \ref{Lemma 2.4}(5) and (6), we obtain that for any $K\in Spec^L(M)$ and any closed set $\nu^s(N^\prime)$ in $Spec^L(M^\prime)$, where $N^\prime\leq M^\prime$, we have $K\in\rho^{-1}(\nu^s(N^\prime))=\rho^{-1}(\nu^{s*}(Ann_{M^\prime}(\sqrt{Ann_R(N^\prime)})))\Leftrightarrow soc(f(K))\subseteq Ann_{M^\prime}(\sqrt{Ann_R(N^\prime)})\Leftrightarrow\sqrt{Ann_R(N^\prime)}\subseteq Ann_R(soc(f(K)))=Ann_R(f(soc(K)))=Ann_R(soc(K))\Leftrightarrow soc(K)\subseteq Ann_M(\sqrt{Ann_R(N^\prime)})\Leftrightarrow K\in\nu^{s*}(Ann_M(\sqrt{Ann_R(N^\prime)}))=\nu^s(Ann_M(\sqrt{Ann_R(N^\prime)}))$. Thus $\rho^{-1}(\nu^s(N^\prime))=\nu^s(Ann_M(\sqrt{Ann_R(N^\prime)}))$ and hence $\rho$ is continuous. Now, suppose that $\rho$ is surjective. To complete the proof, it is sufficient to show that $\rho$ is closed. So let $\nu^s(N)$ be any closed set in $Spec^L(M)$, where $N\leq M$. As we have seen $\rho^{-1}(\nu^s(N^\prime))=\nu^s(Ann_M(\sqrt{Ann_R(N^\prime)}))$ for any submodule $N^\prime$ of $M^\prime$. So $\rho^{-1}(\nu^s(f(N)))=\nu^s(Ann_M(\sqrt{Ann_R(f(N))}))=\nu^s(Ann_M(\sqrt{Ann_R(N)}))=\nu^s(N)$ by Lemma \ref{Lemma 2.4}(6). Since $\rho$ is surjective, then $\rho(\nu^s(N))=\nu^s(f(N))$. This implies that $\rho$ is closed. Consequently, $Spec^L(M)$ is homeomorphic to $Spec^L(M^\prime)$.
\end{proof}
The proof of the following result is straightforward by using Lemma \ref{Lemma 2.13}(1) and Theorem \ref{Theorem 2.14}.
\begin{corollary}\label{corollary 2.15}
Let $f:M\rightarrow M^\prime$ be a module isomorphism of $R$-modules. Then $Spec^L(M)$ is homeomorphic to $Spec^L(M^\prime)$.
\end{corollary}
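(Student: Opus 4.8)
The plan is to reduce everything to Theorem \ref{Theorem 2.14}, whose hypotheses are nearly all in place: since $f$ is an isomorphism it is in particular a module monomorphism, so the map $\rho:Spec^L(M)\to Spec^L(M^\prime)$ given by $\rho(K)=f(K)$ is already known (from Theorem \ref{Theorem 2.14}) to be well-defined, injective, and continuous. The only thing that remains in order to invoke the ``moreover'' clause of that theorem --- which upgrades $\rho$ to a homeomorphism once it is surjective --- is to check that $\rho$ is onto.

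To establish surjectivity I would take an arbitrary $N^\prime\in Spec^L(M^\prime)$ and produce a preimage. Because $f$ is an isomorphism it is surjective, so $f(M)=M^\prime$ and hence $N^\prime\subseteq f(M)$. This is precisely the hypothesis required by Lemma \ref{Lemma 2.13}(1), which then guarantees $f^{-1}(N^\prime)\in Spec^L(M)$, so $f^{-1}(N^\prime)$ is a genuine element of the domain of $\rho$. Finally, surjectivity of $f$ gives $f(f^{-1}(N^\prime))=N^\prime$, whence $\rho(f^{-1}(N^\prime))=N^\prime$; thus every point of $Spec^L(M^\prime)$ lies in the image of $\rho$.

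With $\rho$ shown to be a surjective, injective, and continuous map, the surjective case of Theorem \ref{Theorem 2.14} immediately yields that $Spec^L(M)$ is homeomorphic to $Spec^L(M^\prime)$. I expect essentially no obstacle here: the substantive work is carried entirely by Theorem \ref{Theorem 2.14} and Lemma \ref{Lemma 2.13}(1), and the only genuinely new verification --- surjectivity of $\rho$ --- rests on the single elementary fact that a surjective $f$ satisfies $f\bigl(f^{-1}(N^\prime)\bigr)=N^\prime$ for subsets $N^\prime$ of its codomain.
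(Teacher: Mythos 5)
Your proposal is correct and follows exactly the route the paper intends: the authors themselves state that the corollary is ``straightforward by using Lemma~\ref{Lemma 2.13}(1) and Theorem~\ref{Theorem 2.14},'' and your argument simply makes explicit the one omitted verification, namely the surjectivity of $\rho$ via $N^\prime\subseteq f(M)=M^\prime$, $f^{-1}(N^\prime)\in Spec^L(M)$, and $f\bigl(f^{-1}(N^\prime)\bigr)=N^\prime$. Nothing further is needed.
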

\section{A base for the Zariski topology on $Spec^L(M)$}
Let $M$ be an $R$-module and let $D_r=Spec(R)-V^R(rR)$ for $r\in R$. It is known that the collection $\{D_r\,\mid\, r\in R\}$ is a base for the Zariski topology on $Spec(R)$. In addition, each $D_r$ is quasi-compact and thus $Spec(R)=D_1$ is quasi-compact.

In this section, we put $E_r=Spec^L(M)-\nu^s(Ann_M(r))$ for each $r\in R$ and show that $E=\{E_r\,\mid\,r\in R\}$ forms a base for the for the $\mathcal{SL}$-topology and give some related results. It is clear that $E_0=\emptyset$ and $E_1=Spec^L(M)$.
\begin{theorem}\label{theorem 3.1}
Let $M$ be an $R$-module. Then the set $E=\{E_r\,\mid\, r\in R\}$ forms a base for $Spec^L(M)$ with the $\mathcal{SL}$-topology.
\end{theorem}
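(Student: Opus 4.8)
The plan is to verify the two requirements for a collection of open sets to form a base: that each member is open in the $\mathcal{SL}$-topology, and that every open set is a union of members. The first requirement is immediate, since by definition $E_r = Spec^L(M) - \nu^s(Ann_M(r))$ is the complement of the closed set $\nu^s(Ann_M(r))$. Because every open set of the $\mathcal{SL}$-topology has the form $Spec^L(M) - \nu^s(N)$ for some $N \leq M$, the whole problem reduces to expressing each such open set as a union of the $E_r$.

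The key step is a clean pointwise description of membership in the basic closed sets: for any $K \in Spec^L(M)$ and any $r \in R$, I claim that $K \in \nu^s(Ann_M(r))$ if and only if $r \in \sqrt{Ann_R(K)}$. The forward direction is immediate, since $r \in Ann_R(Ann_M(r))$ always holds, so the containment $Ann_R(Ann_M(r)) \subseteq \sqrt{Ann_R(K)}$ forces $r \in \sqrt{Ann_R(K)}$. The reverse direction is where the defining condition of $Spec^L(M)$ enters: if $r \in \sqrt{Ann_R(K)} = Ann_R(soc(K))$, then $r\,soc(K) = \{0\}$, so $soc(K) \subseteq Ann_M(r)$, whence $Ann_R(Ann_M(r)) \subseteq Ann_R(soc(K)) = \sqrt{Ann_R(K)}$, giving $K \in \nu^s(Ann_M(r))$. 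This is essentially the argument already used in the proof of Lemma \ref{Lemma 2.4}(5).

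With this characterization in hand, I would establish that $\nu^s(N) = \underset{r\in Ann_R(N)}{\bigcap}\nu^s(Ann_M(r))$ for every $N \leq M$. Indeed, $K \in \nu^s(N)$ means $Ann_R(N) \subseteq \sqrt{Ann_R(K)}$, which by the characterization is exactly the assertion that $r \in \sqrt{Ann_R(K)}$, i.e. $K \in \nu^s(Ann_M(r))$, for every $r \in Ann_R(N)$. Passing to complements then yields $Spec^L(M) - \nu^s(N) = \underset{r\in Ann_R(N)}{\bigcup}E_r$, exhibiting an arbitrary open set as a union of members of $E$ and completing the proof that $E$ is a base.

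The only delicate point is the reverse implication in the characterization, which genuinely relies on the equality $\sqrt{Ann_R(K)} = Ann_R(soc(K))$ built into the definition of $Spec^L(M)$; without it, the passage from $r \in \sqrt{Ann_R(K)}$ back to $Ann_R(Ann_M(r)) \subseteq \sqrt{Ann_R(K)}$ would break down. The remaining manipulations are routine set-theoretic and annihilator bookkeeping.
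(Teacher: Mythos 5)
Your proof is correct and takes essentially the same route as the paper: your membership characterization $K \in \nu^{s}(Ann_M(r)) \Leftrightarrow r \in \sqrt{Ann_R(K)}$ and the resulting identity $Spec^L(M)-\nu^{s}(N)=\underset{r\in Ann_R(N)}{\bigcup}E_r$ are exactly what the paper verifies pointwise by picking $r\in Ann_R(N)-Ann_R(soc(K))$ and checking $K\in E_r\subseteq W$ via the same annihilator manipulations. In particular, both arguments hinge at the same spot on the defining equality $\sqrt{Ann_R(K)}=Ann_R(soc(K))$ for members of $Spec^L(M)$, which you correctly identify as the only delicate step.
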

\begin{proof}
Let $W=Spec^L(M)-\nu^s(N)$ be an open set in $Spec^L(M)$, where $N\leq M$ and let $K\in W$. To complete the proof, and it is sufficient to find $r\in R$ such that $K\in E_r\subseteq W$. As $K\in W$, then $Ann_R(N)\nsubseteq\sqrt{Ann_R(K)}=Ann_R(soc(K))$ and hence $\exists r\in Ann_R(N)-Ann_R(soc(K))$. If $K\notin E_r$, then $rR\subseteq Ann_R(Ann_M(r))\subseteq Ann_R(soc(K))$ and hence $r\in Ann_R(soc(K))$ which is a contradiction, and this means that $K\in E_r$. As $r\in Ann_R(N)$, then $Ann_M(Ann_R(N))\subseteq Ann_M(r)$. Now, we prove that $E_r\subseteq W$. So let $K^\prime\in E_r$. Then $Ann_R(Ann_M(r))\nsubseteq Ann_R(soc(K^\prime))$. If $K^\prime\notin W$, then $Ann_R(N)\subseteq Ann_R(soc(K^\prime))$ which follows that $Ann_M(Ann_R(soc(K^\prime)))\subseteq Ann_M(Ann_R(N))\subseteq Ann_M(r)$. So $Ann_R(Ann_M(r))\subseteq Ann_R(Ann_M(Ann_R(soc(K^\prime))))=Ann_R(soc(K^\prime))$, a contradiction. Therefore $K^\prime\in W$. Consequently, $K\in E_r\subseteq W$, as needed.
\end{proof}
For any ring $R$, the set of all units in $R$ and the nilradical of $R$ will be denoted by $U(R)$ and $N(R)$, respectively.
\begin{proposition} \label{Proposition 3.2}
Let $M$ be an $R$-module and $r\in R$. Then we have the following:
\begin{enumerate}
\item $\varphi^{-1}(D_{\overline{r}})=E_r$.
\item $\varphi(E_r)\subseteq D_{\overline{r}}$. If $\varphi$ is surjective, then the equality holds.
\item $E_a\cap E_b=E_{ab}$ for any elements $a, b\in R$.
\item If $r\in N(R)$, then $E_r=\emptyset$.
\item If $r\in U(R)$, then $E_r=Spec^L(M)$.
\end{enumerate}
\end{proposition}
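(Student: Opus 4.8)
The plan is to prove part (1) first and then read off parts (2)--(5) as essentially formal consequences, since (1) exhibits each basic set $E_r$ as a pullback $\varphi^{-1}(D_{\overline{r}})$ of a principal open set in $Spec(\overline{R})$, after which the whole proposition reduces to elementary facts about the sets $D_{\overline{r}}$ downstairs.

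For part (1) I would invoke Proposition \ref{Proposition 2.9}. Put $I=rR+Ann_R(M)$, an ideal containing $Ann_R(M)$ whose image is $\overline{I}=\overline{r}\,\overline{R}$, so that $V^{\overline{R}}(\overline{r}\,\overline{R})=V^{\overline{R}}(\overline{I})$. Proposition \ref{Proposition 2.9} then gives $\varphi^{-1}(V^{\overline{R}}(\overline{I}))=\nu^s(Ann_M(I))$. The only nonformal point is the annihilator computation $Ann_M(I)=Ann_M(r)$: since $Ann_R(M)\cdot M=\{0\}$ we have $Ann_M(Ann_R(M))=M$, while $Ann_M(rR)=Ann_M(r)$ by commutativity of $R$, whence $Ann_M(rR+Ann_R(M))=Ann_M(rR)\cap Ann_M(Ann_R(M))=Ann_M(r)\cap M=Ann_M(r)$. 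Taking complements in $Spec^L(M)$ turns $\varphi^{-1}(V^{\overline{R}}(\overline{r}\,\overline{R}))=\nu^s(Ann_M(r))$ into $\varphi^{-1}(D_{\overline{r}})=E_r$.

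With (1) in hand, part (2) is the set-theoretic identity $\varphi(\varphi^{-1}(A))=A\cap\operatorname{im}\varphi\subseteq A$, which becomes an equality when $\varphi$ is surjective. Part (3) follows because preimages commute with intersection: $E_a\cap E_b=\varphi^{-1}(D_{\overline{a}})\cap\varphi^{-1}(D_{\overline{b}})=\varphi^{-1}(D_{\overline{a}}\cap D_{\overline{b}})$, and in $Spec(\overline{R})$ primeness yields $D_{\overline{a}}\cap D_{\overline{b}}=D_{\overline{a}\,\overline{b}}=D_{\overline{ab}}$ (since $\overline{ab}\in p\Leftrightarrow\overline{a}\in p$ or $\overline{b}\in p$), so this equals $\varphi^{-1}(D_{\overline{ab}})=E_{ab}$. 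For (4), if $r\in N(R)$ then $\overline{r}$ is nilpotent in $\overline{R}$ and hence lies in every prime of $\overline{R}$, so $D_{\overline{r}}=\emptyset$ and $E_r=\varphi^{-1}(\emptyset)=\emptyset$; for (5), if $r\in U(R)$ then $\overline{r}\in U(\overline{R})$ lies in no prime, so $D_{\overline{r}}=Spec(\overline{R})$ and $E_r=\varphi^{-1}(Spec(\overline{R}))=Spec^L(M)$.

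I do not expect a genuine obstacle: the mathematical content is concentrated in part (1), and even there the only step beyond citing Proposition \ref{Proposition 2.9} is the annihilator identity $Ann_M(rR+Ann_R(M))=Ann_M(r)$. The main thing to be careful about is that parts (4) and (5) must not tacitly assume surjectivity of $\varphi$; they do not, because $\varphi^{-1}(\emptyset)=\emptyset$ and $\varphi^{-1}(Spec(\overline{R}))=Spec^L(M)$ hold for an arbitrary map.
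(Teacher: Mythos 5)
Your proposal is correct and follows essentially the same route as the paper's proof: part (1) is obtained from Proposition \ref{Proposition 2.9} applied to the principal open set $D_{\overline{r}}$, and parts (2)--(5) are then read off as formal consequences exactly as in the paper, including the reduction $D_{\overline{a}}\cap D_{\overline{b}}=D_{\overline{ab}}$ for (3) and the facts $D_{\overline{r}}=\emptyset$ for $r\in N(R)$ and $D_{\overline{r}}=Spec(\overline{R})$ for $r\in U(R)$ for (4) and (5). The only difference is cosmetic and to your credit: you explicitly handle the hypothesis of Proposition \ref{Proposition 2.9} that the ideal contain $Ann_R(M)$, by taking $I=rR+Ann_R(M)$ and verifying $Ann_M(I)=Ann_M(r)$, a detail the paper's proof leaves implicit.
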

\begin{proof}

(1)$\varphi^{-1}(D_{\overline{r}})=\varphi^{-1}(Spec(\overline{R})-V^{\overline{R}}(\overline{r}\overline{R}))=Spec^L(M)-\varphi^{-1}(V^{\overline{R}}(\overline{r}\overline{R}))=Spec^L(M)-\nu^s(Ann_M(r))=E_r$ by Proposition \ref{Proposition 2.9}. \\
(2) follows from (1).\\
(3) $E_a\cap E_b=\varphi^{-1}(D_{\overline{a}})\cap\varphi^{-1}(D_{\overline{b}})=\varphi^{-1}(D_{\overline{a}}\cap D_{\overline{b}})=\varphi^{-1}(D_{\overline{ab}})=E_{ab}$.\\
(4) Suppose that $r\in N(R)$. Then $D_r=\emptyset$ and hence $D_{\overline{r}}=\emptyset$. Thus $E_r=\varphi^{-1}(D_{\overline{r}})=\emptyset$.\\
(5) Suppose that $r\in U(R)$. Then $D_r=Spec(R)$ which follows that $D_{\overline{r}}=Spec(\overline{R})$. Therefore $E_r=\varphi^{-1}(D_{\overline{r}})=\varphi^{-1}(Spec(\overline{R}))=Spec^L(M)$.
\end{proof}
\begin{corollary}\label{corollary 3.3}
Let $M$ be an $R$-module. If $R$ is field, then the Zariski topology on $Spec^L(M)$ is the trivial topology.
\end{corollary}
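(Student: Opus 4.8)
The plan is to exploit the base $E=\{E_r\mid r\in R\}$ furnished by Theorem \ref{theorem 3.1} together with the computations of $E_r$ carried out in Proposition \ref{Proposition 3.2}. Since every open set in the $\mathcal{SL}$-topology is a union of members of $E$, it suffices to show that the only basic open sets are $\emptyset$ and $Spec^L(M)$; then the only open sets are $\emptyset$ and $Spec^L(M)$, which is precisely the trivial (indiscrete) topology.

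First I would record the two ring-theoretic facts that hold because $R$ is a field: namely $N(R)=\{0\}$ (a field has no nonzero nilpotents) and $U(R)=R-\{0\}$ (every nonzero element is invertible). These are exactly the hypotheses appearing in Proposition \ref{Proposition 3.2}(4) and (5).

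Next I would evaluate $E_r$ for each $r\in R$ by a dichotomy on whether $r=0$. For $r=0$ we already have $E_0=\emptyset$ (noted just before Theorem \ref{theorem 3.1}, and also a case of Proposition \ref{Proposition 3.2}(4) since $0\in N(R)$). For $r\neq 0$, the element $r$ lies in $U(R)$, so Proposition \ref{Proposition 3.2}(5) gives $E_r=Spec^L(M)$. Hence the base collapses to $E=\{\emptyset,\ Spec^L(M)\}$.

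Finally I would conclude: since $E$ is a base for the $\mathcal{SL}$-topology and arbitrary unions of $\emptyset$ and $Spec^L(M)$ yield only $\emptyset$ and $Spec^L(M)$, these are the sole open sets, so the topology is trivial. There is essentially no obstacle here; the statement is a direct corollary of the preceding base description and the unit/nilradical computations, and the only point requiring a word of care is the elementary remark that a base consisting solely of the empty set and the whole space generates exactly the indiscrete topology.
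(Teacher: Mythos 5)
Your proposal is correct and follows essentially the same route as the paper's own proof: both arguments invoke Proposition \ref{Proposition 3.2}(5) to get $E_r=Spec^L(M)$ for every $r\in U(R)=R-\{0\}$, note $E_0=\emptyset$, and conclude via the base $E=\{E_r\mid r\in R\}$ of Theorem \ref{theorem 3.1} that the only open sets are $\emptyset$ and $Spec^L(M)$. Your extra remark that a base consisting only of $\emptyset$ and the whole space generates exactly the indiscrete topology merely makes explicit a step the paper leaves implicit.
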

\begin{proof}
For any $r\in R-\{0\}$, we have $r\in U(R)$ and hence $E_r=Spec^L(M)$ by Proposition \ref{Proposition 3.2}(5). Also $E_0=\emptyset$. So $E=\{E_r\,\mid\, r\in R\}=\{Spec^L(M),\emptyset\}$ and so the Zariski topology on $Spec^L(M)$ is the trivial topology.
\end{proof}
The converse of the above result is not true in general. For example, take $M_2(\mathbb{Z}_8)$ (the group of all $2\times 2$ metrices with entries in $\mathbb{Z}_8$) as $\mathbb{Z}_8$-module. Then $1, 3, 5, 7\in U(\mathbb{Z}_8)$ and $0, 2, 4, 6\in N(\mathbb{Z}_8)$ which follows that $E_1=E_3=E_5=E_7=Spec^L(M_2(\mathbb{Z}_8))$ and $E_0=E_2=E_4=E_6=\emptyset$ by Proposition \ref{Proposition 3.2}. So $E=\{E_r\,\mid\, r\in R\}=\{\emptyset, Spec^L(M_2(\mathbb{Z}_8))\}$. Therefore the Zariski topology on $Spec^L(M_2(\mathbb{Z}_8))$ is the trivial topology. However $\mathbb{Z}_8$ is not field.
\begin{theorem}\label{Theorem 3.4}
Let $M$ be an $R$-module. If $\varphi$ is surjective, then for each $r\in R$, $E_r$ is quasi-compact. Therefore, $Spec^L(M)$ is quasi-compact.
\end{theorem}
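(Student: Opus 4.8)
The plan is to prove quasi-compactness of each $E_r$ by transporting the known quasi-compactness of the basic open sets $D_{\overline{r}}$ in $Spec(\overline{R})$ across the continuous surjection $\varphi$, and then to deduce the quasi-compactness of $Spec^L(M)$ as the special case $r=1$. First I would recall from Proposition \ref{Proposition 3.2}(1) that $\varphi^{-1}(D_{\overline{r}})=E_r$, and from Proposition \ref{Proposition 3.2}(2) that, since $\varphi$ is surjective, $\varphi(E_r)=D_{\overline{r}}$. The essential input is that each $D_{\overline{r}}$ is quasi-compact in $Spec(\overline{R})$, which is the standard fact about the distinguished basic open sets in the Zariski spectrum of a commutative ring, recalled at the start of this section; so the goal reduces to showing that the continuous surjective image relationship lets quasi-compactness descend from $D_{\overline{r}}$ onto $E_r$.

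The key step is to verify directly that $E_r$ is quasi-compact by testing an arbitrary cover by basic open sets. I would take an open cover of $E_r$, and by Theorem \ref{theorem 3.1} refine it to a cover by sets of the form $E_{r_i}=\varphi^{-1}(D_{\overline{r_i}})$ with $i$ ranging over some index set; it suffices to extract a finite subcover from a cover by basic opens. Applying $\varphi$ and using surjectivity together with $\varphi(E_r)=D_{\overline{r}}$, I obtain that $\{D_{\overline{r_i}}\}$ covers $D_{\overline{r}}$. Here I would use the set-theoretic fact that for a surjective map, $\varphi\big(\bigcup_i E_{r_i}\big)=\bigcup_i \varphi(E_{r_i})$ and that $E_r\subseteq\bigcup_i E_{r_i}$ forces $D_{\overline{r}}=\varphi(E_r)\subseteq\bigcup_i D_{\overline{r_i}}$. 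Since $D_{\overline{r}}$ is quasi-compact, finitely many $D_{\overline{r_{i_1}}},\dots,D_{\overline{r_{i_n}}}$ already cover $D_{\overline{r}}$.

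The final step is to pull this finite subcover back. Taking preimages under $\varphi$ and using $\varphi^{-1}(D_{\overline{r_{i_j}}})=E_{r_{i_j}}$ together with $E_r=\varphi^{-1}(D_{\overline{r}})\subseteq\varphi^{-1}\big(\bigcup_{j=1}^n D_{\overline{r_{i_j}}}\big)=\bigcup_{j=1}^n E_{r_{i_j}}$, I conclude that $E_r\subseteq\bigcup_{j=1}^n E_{r_{i_j}}$, i.e. the finite subcollection $\{E_{r_{i_1}},\dots,E_{r_{i_n}}\}$ covers $E_r$. This establishes that $E_r$ is quasi-compact. Finally, taking $r=1$ and invoking $E_1=Spec^L(M)$ (noted just before Theorem \ref{theorem 3.1}) gives that $Spec^L(M)$ itself is quasi-compact.

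The main obstacle, and the point requiring care rather than difficulty, is the cover-refinement bookkeeping: one must argue that it is legitimate to assume the open cover consists of basic sets $E_{r_i}$, which is justified because $E$ is a base by Theorem \ref{theorem 3.1}, and one must be careful that the passage $E_r\subseteq\bigcup_i E_{r_i}\Rightarrow D_{\overline{r}}\subseteq\bigcup_i D_{\overline{r_i}}$ genuinely uses surjectivity of $\varphi$ (so that $\varphi(E_r)$ is all of $D_{\overline{r}}$ rather than a proper subset). Everything else is a formal consequence of $\varphi^{-1}(D_{\overline{r}})=E_r$, the compatibility of $\varphi^{-1}$ with unions, and the quasi-compactness of $D_{\overline{r}}$ in $Spec(\overline{R})$.
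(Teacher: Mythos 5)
Your proof is correct and follows essentially the same route as the paper's: reduce to a basic open cover via Theorem \ref{theorem 3.1}, push forward through the surjection $\varphi$ using Proposition \ref{Proposition 3.2} to get a cover of the quasi-compact set $D_{\overline{r}}$, extract a finite subcover, pull it back via $\varphi^{-1}(D_{\overline{a}})=E_a$, and conclude with $Spec^L(M)=E_1$. Your explicit remarks on the refinement bookkeeping and on exactly where surjectivity is needed are careful touches the paper leaves implicit, but the argument is the same.
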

\begin{proof}
Let $r\in R$ and $\mathcal{B}=\{E_a\,\mid\, a\in\Lambda\}$ be a basic open cover for $E_r$, where $\Lambda\subseteq R$. This follows that $E_r\subseteq \underset{a\in\Lambda}{\bigcup} E_a$ and thus $D_{\overline{r}}=\varphi(E_r)\subseteq\underset{a\in\Lambda}{\bigcup} \varphi(E_a)=\underset{a\in\Lambda}{\bigcup} D_{\overline{a}}$ using Proposition \ref{Proposition 3.2}(2). So $\overline{\mathcal{B}}=\{D_{\overline{a}}\,\mid\, a\in\Lambda\}$ is a basic open cover for $D_{\overline{r}}$ which is a quasi-compact set in $Spec(R)$, and so it has a finite subcover $\mathcal{B}^*=\{D_{\overline{a_j}}\,\mid\, j=1,..., m\}$, where $m\in \mathbb{N}$ and $t_j\in\Lambda$ for any $j=1,...,m$. Thus $D_{\overline{r}}\subseteq\bigcup\limits_{j=1}^m D_{\overline{a_j}}$ which implies that $E_r=\varphi^{-1}(D_{\overline{r}})\subseteq\bigcup\limits_{j=1}^m\varphi^{-1}(D_{\overline{a_j}})=\bigcup\limits_{j=1}^m E_{\overline{a_j}}$ using Proposition \ref{Proposition 3.2}(1). Therefore $\mathcal{B}^{**}=\{E_{\overline{a_j}}\,\mid\, j=1,...,m\}\subseteq \mathcal{B}$ is a finite subcover for $E_r$. The last part of the theorem follows from the equality $Spec^L(M)=E_1$.
\end{proof}
\begin{theorem}\label{Theorem 3.5}
Let $M$ be an $R$-module and let $\varphi$ be surjective. Then the quasi-compact open subsets of $Spec^L(M)$ are closed under finite intersections and form an open base.
\end{theorem}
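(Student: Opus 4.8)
The plan is to exploit the base $E = \{E_r \mid r \in R\}$ of Theorem \ref{theorem 3.1} together with the multiplicative relation $E_a \cap E_b = E_{ab}$ from Proposition \ref{Proposition 3.2}(3) and the quasi-compactness of each $E_r$ furnished by Theorem \ref{Theorem 3.4}. These three ingredients are exactly what is needed, and the whole statement follows formally from them.

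I would first dispose of the base property, which is immediate. Since $\varphi$ is surjective, Theorem \ref{Theorem 3.4} shows that every $E_r$ is quasi-compact, and by Theorem \ref{theorem 3.1} the family $E$ is already a base for the $\mathcal{SL}$-topology. Hence each open subset of $Spec^L(M)$ is a union of quasi-compact open sets, so the quasi-compact open sets form an open base. No further argument is required here.

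The substance of the theorem is closure under finite intersection, and by an obvious induction it suffices to handle two sets. Let $U$ and $V$ be quasi-compact open subsets. Because $E$ is a base, $U$ is a union of members of $E$; these cover the quasi-compact set $U$, so I can pass to a finite subcover and write $U = \bigcup_{i=1}^{n} E_{a_i}$, and likewise $V = \bigcup_{j=1}^{m} E_{b_j}$. Distributing the intersection and invoking Proposition \ref{Proposition 3.2}(3) then yields
\[
U \cap V = \bigcup_{i=1}^{n}\bigcup_{j=1}^{m}\bigl(E_{a_i}\cap E_{b_j}\bigr) = \bigcup_{i=1}^{n}\bigcup_{j=1}^{m} E_{a_i b_j},
\]
a finite union of basic sets, each quasi-compact by Theorem \ref{Theorem 3.4}. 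Since a finite union of quasi-compact sets is quasi-compact and $U \cap V$ is visibly open, $U \cap V$ is a quasi-compact open set, as required.

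I do not anticipate any serious obstacle, as the argument is essentially formal once the above facts are available. The one step meriting care is the representation of an arbitrary quasi-compact open set as a finite union of the $E_r$: this relies on $E$ being a genuine base so that quasi-compactness can be applied to the cover of $U$ by basic sets. Everything else rests on the elementary facts that finite unions of quasi-compact sets are quasi-compact and that $E_a \cap E_b = E_{ab}$ keeps intersections of basic sets within the base.
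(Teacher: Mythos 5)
Your proof is correct and follows essentially the same route as the paper: both write each quasi-compact open set as a finite union of basic sets $E_r$ (via Theorem \ref{theorem 3.1} and quasi-compactness), distribute the intersection using $E_a\cap E_b=E_{ab}$ from Proposition \ref{Proposition 3.2}(3), and invoke Theorem \ref{Theorem 3.4}. The only cosmetic difference is at the final step, where the paper extracts a finite subcover from an arbitrary basic open cover of $C_1\cap C_2$, while you appeal directly to the fact that a finite union of quasi-compact sets is quasi-compact.
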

\begin{proof}
Let $C_1, C_2$ be quasi-compact open sets of $Spec^L(M)$ and $\mathcal{B}=\{E_r\,\mid\, r\in\Delta\}$ be a basic open cover for $C_1\cap C_2$, where $\Delta\subseteq R$. By Theorem \ref{theorem 3.1}, $E$ is a base for $Spec^L(M)$ with the $\mathcal{SL}$-topology which follows that $C_1$ and $C_2$ can be written as a finite union of elements of $E$. Thus $C_1\cap C_2$ is a finite union of elements of $E$ using Proposition \ref{Proposition 3.2}(3). So there exists $m\in\mathbb{N}$ and $h_1, h_2, ..., h_m\in R$ such that $C_1\cap C_2=\bigcup\limits_{k=1}^m E_{h_k}\subseteq\underset{r\in\Delta}{\bigcup}E_r$. Hence $E_{h_k}\subseteq\underset{r\in\Delta}{\bigcup}E_r$ for each k. By Theorem \ref{Theorem 3.4}, each $E_{h_k}$ is quasi-compact which implies that $E_{h_k}\subseteq \bigcup\limits_{i=1}^{n_k}E_{r_{k, i}}$, where $n_k\geq 1$ depends on $k$ and $r_{k, i}\in\Delta$ for any $k=1, ..., m$ and  $i=1, ..., n_k$. This follows that $C_1\cap C_2=\bigcup\limits_{k=1}^m E_{h_k}\subseteq \bigcup\limits_{k=1}^m\bigcup\limits_{i=1}^{n_k}E_{r_{k, i}}$. Therefore $\mathcal{B}^*=\{E_{r_{k,i}}\,\mid\, k=1, ..., m, i=1, ..., n_k\}$ is a finite subcover for $C_1\cap C_2$. The other part of the theorem is obvious.
\end{proof}
\section{Irreducibility in $Spec^L(M)$ }
Let $M$ be an $R$-module and $Y\subseteq Spec^L(M)$. The closure of $Y$ in $Spec^L(M)$ will be denoted by $Cl(Y)$. We also denote the sum $\underset{K\in Y}{\sum} soc(K)$ by $H(Y)$. If $Y=\emptyset$, we set $H(Y)=\{0\}$.
\begin{proposition}\label{proposition 4.1}
For any $R$-module $M$ and $Y\subseteq Spec^L(M)$, we have $Cl(Y)=\nu^s(H(Y))$. Therefore, $Y$ is closed in $Spec^L(M)$ $\Leftrightarrow$ $\nu^s(H(Y))=Y$. Moreover, if $M\in Y$, then $Y$ is dense in $Spec^L(M)$.
\end{proposition}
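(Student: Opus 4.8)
The plan is to identify $Cl(Y)$ as the smallest closed set containing $Y$ and to pin it down using the fact, established in Theorem \ref{Theorem 2.3}, that every closed set of $Spec^L(M)$ has the form $\nu^s(N)$ for some submodule $N$. The whole argument rests on the elementary identity $Ann_R\big(\sum_{K\in Y} soc(K)\big)=\bigcap_{K\in Y} Ann_R(soc(K))$, which lets me translate submodule containments into annihilator containments.

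First I would prove $Cl(Y)\subseteq\nu^s(H(Y))$. For each $K\in Y$ one has $soc(K)\subseteq H(Y)$, hence $Ann_R(H(Y))\subseteq Ann_R(soc(K))=\sqrt{Ann_R(K)}$, the last equality because $K\in Spec^L(M)$; thus $K\in\nu^s(H(Y))$. This shows $Y\subseteq\nu^s(H(Y))$, and since $\nu^s(H(Y))$ is closed, its containing $Y$ forces $Cl(Y)\subseteq\nu^s(H(Y))$.

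For the reverse inclusion I would take an arbitrary closed set $\nu^s(N)$ with $Y\subseteq\nu^s(N)$ and show $\nu^s(H(Y))\subseteq\nu^s(N)$; intersecting over all such $N$ then yields $\nu^s(H(Y))\subseteq Cl(Y)$. From $Y\subseteq\nu^s(N)$ we get $Ann_R(N)\subseteq\sqrt{Ann_R(K)}=Ann_R(soc(K))$ for every $K\in Y$, so $Ann_R(N)\subseteq\bigcap_{K\in Y}Ann_R(soc(K))=Ann_R(H(Y))$ by the identity above. Consequently any $K'\in\nu^s(H(Y))$ satisfies $Ann_R(N)\subseteq Ann_R(H(Y))\subseteq\sqrt{Ann_R(K')}$, i.e. $K'\in\nu^s(N)$. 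Combining the two inclusions gives $Cl(Y)=\nu^s(H(Y))$, and the equivalence ``$Y$ closed $\Leftrightarrow\nu^s(H(Y))=Y$'' is then immediate, since $Y$ is closed precisely when $Cl(Y)=Y$.

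Finally, for the density claim when $M\in Y$, I would show $\nu^s(H(Y))=Spec^L(M)$. Since $M\in Y$ we have $soc(M)\subseteq H(Y)$, whence $Ann_R(H(Y))\subseteq Ann_R(soc(M))=\sqrt{Ann_R(M)}$, the last equality because $M\in Spec^L(M)$. As every $K\in Spec^L(M)$ is a submodule of $M$, we have $Ann_R(M)\subseteq Ann_R(K)$, so $\sqrt{Ann_R(M)}\subseteq\sqrt{Ann_R(K)}$; combining yields $Ann_R(H(Y))\subseteq\sqrt{Ann_R(K)}$ for every $K$, i.e. $Spec^L(M)\subseteq\nu^s(H(Y))$, hence equality. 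The step to watch is the reverse inclusion in the first part: it is not verified element-by-element but through the characterization of closed sets as varieties, and it is precisely there that the annihilator-of-a-sum identity does the essential work.
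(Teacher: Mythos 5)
Your proof is correct and follows essentially the same route as the paper's: both arguments identify $\nu^s(H(Y))$ as the smallest closed set containing $Y$, using the identity $Ann_R\big(\sum_{K\in Y}soc(K)\big)=\bigcap_{K\in Y}Ann_R(soc(K))$ together with $Ann_R(soc(K))=\sqrt{Ann_R(K)}$ for $K\in Spec^L(M)$. The only cosmetic divergence is in the density claim, where the paper uses $H(Y)=soc(M)$ and Lemma \ref{Lemma 2.4}(7) to conclude $\nu^s(H(Y))=\nu^s(M)=Spec^L(M)$, while you verify $Spec^L(M)\subseteq\nu^s(H(Y))$ directly from $Ann_R(M)\subseteq Ann_R(K)$ --- an equally valid shortcut.
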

\begin{proof}
Clearly, $Y\subseteq\nu^s(H(Y))$. Let $\nu^s(N)$ be any closed set containing $Y$ and it is sufficient to show that $\nu^s(H(Y))\subseteq\nu^s(N)$. So let $K\in\nu^s(H(Y))$. Then $Ann_R(H(Y))\subseteq Ann_R(soc(K))$. But for any $K^\prime\in Y$, we have $Ann_R(N)\subseteq Ann_R(soc(K^\prime))$ and hence $Ann_R(N)\subseteq \underset{Q\in Y}{\bigcap}Ann_R(soc(Q))=Ann_R(\underset{Q\in Y}{\sum}soc(Q))=Ann_R(H(Y))\subseteq Ann_R(soc(K))$.  Thus $K\in\nu^s(H(Y))$. This means that $\nu^s(H(Y))$ is the smallest closed set containing $Y$ and hence $Cl(Y)=\nu^s(H(Y))$. For the last statement, if $M\in Y$, then $Cl(Y)=\nu^s(H(Y))=\nu^s(soc(M))=\nu^s(M)=Spec^L(M)$ using Lemma \ref{Lemma 2.4}(7).
\end{proof}
Recall that a topological space $Z$ is said to be irreducible if $Z\neq\emptyset$ and whenever $Z=Z_1\cup Z_2$ with closed sets $Z_1$ and $Z_2$ in $Z$, then either $Z=Z_1$ or $Z=Z_2$. Let $Z^\prime\subseteq Z$. Then $Z^\prime $ is irreducible if it is irreducible as a subspace of $Z$. The maximal irreducible subsets of $Z$ are called the irreducible components of $Z$. It is not difficult to prove that any singleton subset of $Z$ is irreducible. Also, a subset $W$ of $Z$ is irreducible if and only if $Cl(W)$ is irreducible, see \cite{bourbakialgebre}.
\begin{theorem}\label{theorem 4.2}
Let $M$ be an $R$-module and let $K\in Spec^L(M)$. Then $Cl(\{K\})=\nu^s(K)$ and $\nu^s(K)$ is an irreducible closed subset of $Spec^L(M)$. In particular, if $M\in Spec^L(M)$, then $Spec^L(M)$ is irreducible.
\end{theorem}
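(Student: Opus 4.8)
The plan is to prove the three assertions in sequence, each building on the previous one. The first task is to establish $Cl(\{K\}) = \nu^s(K)$. By Proposition \ref{proposition 4.1}, the closure of any subset $Y$ equals $\nu^s(H(Y))$, where $H(Y) = \sum_{K' \in Y} soc(K')$. Applying this to the singleton $Y = \{K\}$ gives $Cl(\{K\}) = \nu^s(H(\{K\})) = \nu^s(soc(K))$. Since $K \in Spec^L(M)$, Lemma \ref{Lemma 2.4}(7) yields $\nu^s(soc(K)) = \nu^s(K)$, so $Cl(\{K\}) = \nu^s(K)$ as claimed. This step is essentially a direct substitution into the two results just proved.

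The second task is irreducibility of $\nu^s(K)$. The clean route is to invoke the standard topological fact, recalled just before the theorem, that a subset $W$ is irreducible if and only if its closure $Cl(W)$ is irreducible, together with the observation that any singleton $\{K\}$ is irreducible. Since $\nu^s(K) = Cl(\{K\})$ from the first part, and $\{K\}$ is irreducible (a singleton), it follows immediately that $\nu^s(K) = Cl(\{K\})$ is irreducible. This avoids having to verify the two-closed-set decomposition condition by hand. The only thing to confirm is that $\nu^s(K)$ is genuinely closed, which is automatic since every $\nu^s(N)$ is closed in the $\mathcal{SL}$-topology by Theorem \ref{Theorem 2.3}.

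For the final assertion, suppose $M \in Spec^L(M)$. I would apply the first part with $K = M$ to get $Cl(\{M\}) = \nu^s(M)$. By Theorem \ref{Theorem 2.3}(1), $\nu^s(M) = Spec^L(M)$, so $Cl(\{M\}) = Spec^L(M)$; that is, $\{M\}$ is dense. Since $\{M\}$ is irreducible and the closure of an irreducible set is irreducible, $Spec^L(M) = Cl(\{M\})$ is irreducible. Alternatively one can cite the ``moreover'' clause of Proposition \ref{proposition 4.1}, which already records that $M \in Y$ forces $Y$ dense, applied to $Y = \{M\}$.

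I do not anticipate a serious obstacle here, since the heavy lifting was done in Proposition \ref{proposition 4.1} and Lemma \ref{Lemma 2.4}(7). The one point requiring minor care is the appeal to Lemma \ref{Lemma 2.4}(7): its hypothesis ``$N \in Spec^L(M)$ or $M$ is comultiplication'' must be checked, but this is exactly satisfied because $K \in Spec^L(M)$ by assumption, so the identity $\nu^s(soc(K)) = \nu^s(K)$ applies without needing any comultiplication hypothesis on $M$. Everything else is a formal chain of equalities and the two recalled topological facts about irreducibility.
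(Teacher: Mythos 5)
Your proposal is correct and follows essentially the same route as the paper: the authors likewise compute $Cl(\{K\})=\nu^s(H(\{K\}))=\nu^s(soc(K))=\nu^s(K)$ via Proposition \ref{proposition 4.1} and Lemma \ref{Lemma 2.4}(7), deduce irreducibility of $\nu^s(K)$ from the irreducibility of the singleton $\{K\}$ and the closure fact, and obtain the last claim from $Spec^L(M)=\nu^s(M)$. Your explicit check that the hypothesis of Lemma \ref{Lemma 2.4}(7) is met because $K\in Spec^L(M)$ is a welcome (if minor) addition of care that the paper leaves implicit.
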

\begin{proof}
By Lemma \ref{Lemma 2.4}(7) and Proposition \ref{proposition 4.1}, $Cl(\{K\})=\nu^s(H(K))=\nu^s(soc(K))=\nu^s(K)$. Now, since $\{K\}$ is irreducible, then $Cl(\{K\})=\nu^s(K)$ is irreducible. The last statement follows from the equality $Spec^L(M)=\nu^s(M)$.
\end{proof}
Note that $\nu^s(N)$ for a submodule $N$ of an $R$-module $M$ is not always irreducible. In fact, $Spec^L(M)$ might not be irreducible. For example, take $\mathbb{Z}_6$ as $\mathbb{Z}_6$-module. By some computations, we can see that $Spec^L(\mathbb{Z}_6)=\{\{0, 2, 4\}, \{0, 3\}\}, \nu^s(3\mathbb{Z}_6)=\{\{0, 3\}\}$ and $\nu^s(2\mathbb{Z}_6)=\{0, 2, 4\}$. Note that $Spec^L(\mathbb{Z}_6)=\nu^s(2\mathbb{Z}_6)\cup\nu^s(3\mathbb{Z}_6)$. But $Spec^L(\mathbb{Z}_6)\neq \nu^s(2\mathbb{Z}_6)$ and $Spec^L(\mathbb{Z}_6)\neq \nu^s(3\mathbb{Z}_6)$. Hence $Spec^L(\mathbb{Z}_6)=\nu^s(\mathbb{Z}_6)$ is not irreducible.
\begin{corollary}
Let $M$ be an $R$-module and $Y\subseteq Spec^L(M)$ such that $\sqrt{Ann_R(H(Y))}=p$ is a prime ideal of $R$. If $Spec^L_p(M)\neq\emptyset$, then $Y$ is irreducible in $Spec^L(M)$.
\end{corollary}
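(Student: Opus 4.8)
The plan is to reduce the irreducibility of $Y$ to that of a principal variety $\nu^s(K_0)$, whose irreducibility is already established in Theorem \ref{theorem 4.2}.

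First I would invoke the standard topological fact recalled just before Theorem \ref{theorem 4.2}: a subset $W$ of a topological space is irreducible if and only if its closure $Cl(W)$ is irreducible. Hence it suffices to prove that $Cl(Y)$ is irreducible. By Proposition \ref{proposition 4.1} we have $Cl(Y)=\nu^s(H(Y))$, so the task becomes showing that $\nu^s(H(Y))$ is irreducible.

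Next I would use the hypothesis $Spec^L_p(M)\neq\emptyset$ to choose some $K_0\in Spec^L_p(M)$, so that $\sqrt{Ann_R(K_0)}=p=\sqrt{Ann_R(H(Y))}$. Since $H(Y)$ and $K_0$ are submodules of $M$ with equal radical annihilators, the forward implication of Lemma \ref{Lemma 2.4}(3) (which requires no additional hypothesis on the submodules) gives $\nu^s(H(Y))=\nu^s(K_0)$. By Theorem \ref{theorem 4.2}, $\nu^s(K_0)=Cl(\{K_0\})$ is an irreducible closed set; consequently $Cl(Y)=\nu^s(K_0)$ is irreducible, and therefore $Y$ is irreducible.

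I do not expect a serious obstacle here; the argument is essentially a collapse of the possibly large submodule $H(Y)$ to a single secondary-like submodule $K_0$ carrying the same variety, followed by a direct appeal to Theorem \ref{theorem 4.2}. The one point worth verifying is that $Y\neq\emptyset$, as demanded by the definition of irreducibility, but this is forced by the hypothesis: were $Y=\emptyset$ we would have $H(Y)=\{0\}$ and $\sqrt{Ann_R(H(Y))}=\sqrt{R}=R$, which cannot equal the prime (hence proper) ideal $p$.
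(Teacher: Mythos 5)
Your proposal is correct and takes essentially the same route as the paper: both choose $K\in Spec^L_p(M)$, apply Lemma \ref{Lemma 2.4}(3) together with Proposition \ref{proposition 4.1} to obtain $\nu^s(K)=\nu^s(H(Y))=Cl(Y)$, and then invoke Theorem \ref{theorem 4.2} to conclude that $Cl(Y)$, and hence $Y$, is irreducible. Your closing check that $Y\neq\emptyset$ (since otherwise $\sqrt{Ann_R(H(Y))}=R$ could not be prime) is a small point the paper leaves implicit.
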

\begin{proof}
Let $K\in Spec^L_p(M)$. Then $\sqrt{Ann_R(K)}=p=\sqrt{Ann_R(H(Y))}$ and thus $\nu^s(K)=\nu^s(H(Y))=Cl(Y)$ by Lemma \ref{Lemma 2.4}(3) and Proposition \ref{proposition 4.1}. Using Theorem \ref{theorem 4.2}, $Cl(Y)=\nu^s(K)$ is irreducible and hence $Y$ is irreducible.
\end{proof}
Let $R$ be a ring and $Y\subseteq Spec(R)$. The intersection of all members of $Y$ will be given by $\xi(Y)$. If $Y=\emptyset$, we set $\xi(Y)=R$.
\begin{theorem}\label{theorem 4.4}
Let $M$ be an $R$-module and $Y\subseteq Spec^L(M)$. Then:
\begin{enumerate}
\item If $H(Y)$ is a secondary submodule of $M$, then $Y$ is irreducible.
\item If $Y$ is irreducible, then $\Upsilon=\{Ann_R(soc(K))\,\mid\, K\in Y\}$ is an irreducible closed subset of $Spec(R)$, i.e. $\xi(\Upsilon)=Ann_R(H(Y))$ is a prime ideal of $R$.
\end{enumerate}
\end{theorem}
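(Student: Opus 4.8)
The plan is to reduce both parts to the irreducibility of the single closed set $\nu^s(H(Y))$. By Proposition \ref{proposition 4.1} we have $Cl(Y)=\nu^s(H(Y))$, and since a subset is irreducible exactly when its closure is, throughout I may replace $Y$ by $\nu^s(H(Y))$. Note also that $Y\neq\emptyset$ in both parts (in (1) because $H(Y)$ is secondary, hence nonzero, which forces $Y\neq\emptyset$; in (2) by definition of irreducibility), so $H(Y)\neq\{0\}$ and $Ann_R(H(Y))$ is a proper ideal.

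For part (1), since $H(Y)$ is secondary, $Ann_R(H(Y))$ is primary and $p:=\sqrt{Ann_R(H(Y))}$ is prime. To see $\nu^s(H(Y))$ is irreducible, I would take any expression $\nu^s(H(Y))=\nu^s(L_1)\cup\nu^s(L_2)$ with $\nu^s(L_i)\subseteq\nu^s(H(Y))$, which by Theorem \ref{Theorem 2.3}(3) equals $\nu^s(L_1+L_2)$. The crucial step is to obtain a $K$-independent containment: for every $K\in Y\subseteq\nu^s(L_1+L_2)$ one has $Ann_R(L_1+L_2)\subseteq\sqrt{Ann_R(K)}=Ann_R(soc(K))$, and intersecting over all $K\in Y$ gives $Ann_R(L_1)\cap Ann_R(L_2)=Ann_R(L_1+L_2)\subseteq\bigcap_{K\in Y}Ann_R(soc(K))=Ann_R(H(Y))\subseteq p$. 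Primeness of $p$ then forces, say, $Ann_R(L_1)\subseteq p$; and since every $K\in\nu^s(H(Y))$ satisfies $p=\sqrt{Ann_R(H(Y))}\subseteq\sqrt{Ann_R(K)}$, I conclude $\nu^s(H(Y))\subseteq\nu^s(L_1)$, i.e. $\nu^s(H(Y))=\nu^s(L_1)$. This proves irreducibility, hence $Y$ is irreducible.

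For part (2), I would first record that $\xi(\Upsilon)=\bigcap_{K\in Y}Ann_R(soc(K))=Ann_R\bigl(\sum_{K\in Y}soc(K)\bigr)=Ann_R(H(Y))$; being an intersection of the prime ideals $\sqrt{Ann_R(K)}$, this ideal $J:=Ann_R(H(Y))$ is automatically radical and proper, so it remains only to show it is prime. A clean route uses the continuity of $\varphi$ (Proposition \ref{Proposition 2.9}): as $Y$ is irreducible, $\varphi(Y)$ is irreducible in $Spec(\overline{R})$, its closure is $V^{\overline{R}}(\overline{J})$, and irreducibility of this variety makes the radical ideal $\overline{J}$ prime, whence $J$ is prime in $R$. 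Alternatively, I can argue directly within the paper's machinery: given $ab\in J$, consider the two closed sets $\nu^s(Ann_M(J+Ra))$ and $\nu^s(Ann_M(J+Rb))$, noting that for $K\in\nu^s(H(Y))$ (where $J\subseteq\sqrt{Ann_R(K)}$ is automatic) membership in these reduces exactly to $a\in\sqrt{Ann_R(K)}$ and $b\in\sqrt{Ann_R(K)}$ respectively; since each $\sqrt{Ann_R(K)}$ is prime and contains $ab$, these two closed sets cover $\nu^s(H(Y))$, so irreducibility forces one of them to be the whole space, giving $a\in\bigcap_{K\in Y}\sqrt{Ann_R(K)}=J$ or $b\in J$.

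The main obstacle is part (2): translating the purely topological irreducibility of $Y$ into the algebraic statement that $Ann_R(H(Y))$ is prime. The point requiring care is that the statement demands $Ann_R(H(Y))$ \emph{itself} be prime, not merely its radical; this is reconciled by the observation that $Ann_R(H(Y))=\xi(\Upsilon)$ is already radical as an intersection of the prime ideals $\sqrt{Ann_R(K)}$, so that irreducibility of the associated variety delivers primeness on the nose. In the direct approach, the delicate check is precisely that, on $\nu^s(H(Y))$, membership in $\nu^s(Ann_M(J+Ra))$ is equivalent to $a\in\sqrt{Ann_R(K)}$, for which one uses the membership criterion extracted from Proposition \ref{Proposition 2.9} together with Lemma \ref{Lemma 2.4}(5).
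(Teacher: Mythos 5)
Your proposal is correct, and it is worth separating the two parts: for (2) your primary route coincides with the paper's, while for (1) you take a genuinely different path. The paper proves (1) by upgrading $H(Y)$ to a point of the space: using \cite[Proposition 2.1(h)]{ansari2013dual} it gets $\sqrt{Ann_R(H(Y))}\subseteq Ann_R(soc(H(Y)))$, notes $soc(H(Y))=H(Y)$ (a sum of socles is its own socle), concludes $H(Y)\in Spec^L(M)$, and then invokes Theorem \ref{theorem 4.2} so that $Cl(Y)=\nu^s(H(Y))=Cl(\{H(Y)\})$ is irreducible --- i.e.\ the paper exhibits $H(Y)$ as a generic point. You instead verify irreducibility of $\nu^s(H(Y))$ by hand: any decomposition $\nu^s(H(Y))=\nu^s(L_1)\cup\nu^s(L_2)=\nu^s(L_1+L_2)$ (Theorem \ref{Theorem 2.3}(3)) yields $Ann_R(L_1)\cap Ann_R(L_2)\subseteq Ann_R(H(Y))\subseteq p:=\sqrt{Ann_R(H(Y))}$, and primeness of $p$ (which uses only that $Ann_R(H(Y))$ is primary) together with $p\subseteq\sqrt{Ann_R(K)}$ for every $K\in\nu^s(H(Y))$ forces $\nu^s(H(Y))=\nu^s(L_i)$; this is correct, slightly more elementary, and avoids the cited socle fact, at the cost of not producing the membership $H(Y)\in Spec^L(M)$, which the paper's argument yields for free (though the theorem as stated does not require it, and Theorem \ref{theorem 4.5} later recovers generic points by other means). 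For (2), your first route --- push $Y$ through the continuous $\varphi$, compute $\xi(\varphi(Y))=\overline{Ann_R(H(Y))}$, and apply the Bourbaki criterion that a subset of a prime spectrum is irreducible iff the intersection of its members is prime --- is exactly the paper's proof, and your observation that $Ann_R(H(Y))$ is already radical (an intersection of the primes $\sqrt{Ann_R(K)}=Ann_R(soc(K))$) is precisely why primeness holds on the nose rather than only for the radical. Your alternative direct argument, covering $\nu^s(H(Y))$ by $\nu^s(Ann_M(J+Ra))$ and $\nu^s(Ann_M(J+Rb))$ when $ab\in J=Ann_R(H(Y))$, is also sound, since the criterion $K\in\nu^s(Ann_M(I))\Leftrightarrow I\subseteq\sqrt{Ann_R(K)}$ is exactly the chain of equivalences in the proof of Proposition \ref{Proposition 2.9}; it buys a self-contained proof with no appeal to \cite{bourbakialgebre}. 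One cosmetic caveat shared by you and the paper: $\Upsilon$ itself need not literally be closed in $Spec(R)$; as the ``i.e.'' in the statement indicates, the content is that $\xi(\Upsilon)=Ann_R(H(Y))$ is prime, equivalently that $Cl(\Upsilon)=V^R(Ann_R(H(Y)))$ is irreducible, and that is what both arguments establish.
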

\begin{proof}
(1) Suppose that $H(Y)$ is a secondary submodule of $M$. By \cite[Proposition 2.1(h)]{ansari2013dual}, we have $soc(H(Y))\subseteq Ann_M(\sqrt{Ann_R(H(Y))})$ and hence $\sqrt{Ann_R(H(Y))}\subseteq Ann_R(Ann_M(\sqrt{Ann_R(H(Y))}))\subseteq Ann_R(soc(H(Y)))$. Thus $\sqrt{Ann_R(H(Y))}\subseteq Ann_R(soc(H(Y)))$. But $soc(H(Y))=H(Y)$. This follows that $Ann_R(soc(H(Y))=Ann_R(H(Y))\subseteq \sqrt{Ann_R(H(Y))}$. This means that $Ann_R(soc(H(Y)))=\sqrt{Ann_R(H(Y))}$ and hence $H(Y)\in Spec^L(M)$. By Theorem \ref{theorem 4.2}, $Cl(Y)=\nu^s(H(Y))$ is irreducible in $Spec^L(M)$ and thus $Y$ is irreducible, as desired.\\
(2) Suppose that $Y$ is irreducible in $Spec^L(M)$. So $\varphi(Y)=Y^\prime$ is an irreducible subset of $Spec(\overline{R})$ as $\varphi$ is continuous using Proposition \ref{Proposition 2.9}. Now, $\xi(Y^\prime)=\xi(\varphi(Y))=\underset{K\in Y}{\bigcap} \overline{Ann_R(soc(K))}=\overline{\underset{K\in Y}{\bigcap}Ann_R(soc(K))}=\overline{Ann_R(\underset{K\in Y}{\sum} soc(K))}=\overline{Ann_R(H(Y))}$. Since $Y^\prime$ is irreducible in $Spec(\overline{R})$, then $\xi(Y^\prime)=\overline{Ann_R(H(Y))}\in Spec(\overline{R})$ by \cite[p. 129, Proposition 14]{bourbakialgebre}. This follows that $\xi(\Upsilon)=\underset{K\in Y}{\bigcap} Ann_R(soc(K))=Ann_R(H(Y))\in Spec(R)$. Again, $\Upsilon$ is an irreducible subset of $Spec(R)$ by \cite[p. 129, Proposition 14]{bourbakialgebre}.
\end{proof}
Let $Z$ be a topological space and $F$ be a closed subset of $Z$. Recall that an element $a\in F$ is called a generic point of $F$ if $Cl(\{a\})=F$.
\begin{theorem} \label{theorem 4.5}
Let $M$ be an $R$-module and $Y\subseteq Spec^L(M)$. If $\varphi$ is surjective, then $Y$ is an irreducible closed subset of $Spec^L(M)$ if and only if $Y=\nu^s(K)$ for some $K\in Spec^L(M)$. Therefore, every irreducible closed subset of $Spec^L(M)$ has a generic point.
\end{theorem}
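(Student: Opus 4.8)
The plan is to prove both implications; the forward direction ($\Leftarrow$) is immediate, and all the substance lies in the reverse direction ($\Rightarrow$), which is precisely where the surjectivity of $\varphi$ enters. If $Y=\nu^s(K)$ for some $K\in Spec^L(M)$, then Theorem \ref{theorem 4.2} already asserts that $\nu^s(K)$ is an irreducible closed subset of $Spec^L(M)$, so nothing further is needed.

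For the reverse direction, suppose $Y$ is an irreducible closed subset. First I would invoke Proposition \ref{proposition 4.1}: since $Y$ is closed, $Y=Cl(Y)=\nu^s(H(Y))$. Hence it suffices to produce a single $K\in Spec^L(M)$ with $\nu^s(K)=\nu^s(H(Y))$. The bridge to such a $K$ is Lemma \ref{Lemma 2.4}(3), which guarantees $\nu^s(K)=\nu^s(H(Y))$ as soon as $\sqrt{Ann_R(K)}=\sqrt{Ann_R(H(Y))}$. Thus the whole problem reduces to finding a $K\in Spec^L(M)$ whose annihilator radical equals $\sqrt{Ann_R(H(Y))}$.

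This is exactly where irreducibility and surjectivity combine. Since $Y$ is irreducible, Theorem \ref{theorem 4.4}(2) shows that $p:=Ann_R(H(Y))$ is a prime ideal of $R$; being prime, it is radical, so $p=\sqrt{Ann_R(H(Y))}$. Because $H(Y)\subseteq M$ we have $Ann_R(M)\subseteq p$, and hence $\overline{p}=p/Ann_R(M)$ is a point of $Spec(\overline{R})$. Now the surjectivity of $\varphi$ furnishes $K\in Spec^L(M)$ with $\varphi(K)=\overline{p}$, that is $\sqrt{Ann_R(K)}=p=\sqrt{Ann_R(H(Y))}$. Lemma \ref{Lemma 2.4}(3) then yields $\nu^s(K)=\nu^s(H(Y))=Y$, completing the reverse implication. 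Finally, for the generic-point statement, Theorem \ref{theorem 4.2} gives $Cl(\{K\})=\nu^s(K)=Y$, so $K$ is the required generic point.

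The only genuine obstacle here is conceptual rather than computational: one must recognize that the prime ideal produced by Theorem \ref{theorem 4.4}(2) has to be lifted back to an actual secondary-like submodule of $M$, and that this lifting is precisely the content of the surjectivity hypothesis on $\varphi$. Everything else is routine bookkeeping with radicals and annihilators, and the key reductions are supplied verbatim by Proposition \ref{proposition 4.1} and Lemma \ref{Lemma 2.4}(3).
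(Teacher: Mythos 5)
Your proposal is correct and follows essentially the same route as the paper's own proof: both directions rest on Theorem \ref{theorem 4.2} for ($\Leftarrow$), and for ($\Rightarrow$) both use Theorem \ref{theorem 4.4}(2) to get the prime $Ann_R(H(Y))$, lift it via surjectivity of $\varphi$ to some $K\in Spec^L(M)$, and conclude with Lemma \ref{Lemma 2.4}(3) and Proposition \ref{proposition 4.1} that $\nu^s(K)=\nu^s(H(Y))=Y$. Your write-up is in fact slightly more careful than the paper's, since you explicitly note that $Ann_R(H(Y))$ equals its own radical and that it contains $Ann_R(M)$, and you spell out the generic-point conclusion via $Cl(\{K\})=\nu^s(K)$.
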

\begin{proof}
Suppose that $Y$ is an irreducible closed subdet of $Spec^L(M)$. Then $Ann_R(H(Y))\in Spec(R)$ by Theorem \ref{theorem 4.4}(2) and hence $\overline{Ann_R(H)}\in Spec(\overline{R})$. As $\varphi$ is surjective, we obtain $Ann_R(H(Y))=Ann_R(soc(K))$ for some $K\in Spec^L(M)$. This follows that $\sqrt{Ann_R(K)}=\sqrt{Ann_R(H(Y))}$ and thus $\nu^s(K)=\nu^s(H(Y))=Cl(Y)=Y$ by Lemma \ref{Lemma 2.4}(3) and Proposition \ref{proposition 4.1}. Conversely, if $Y=\nu^s(K)$ for some $K\in Spec^L(M)$, then $Y$ is closed and it is irreducible using Theorem \ref{theorem 4.2}.
\end{proof}
\begin{theorem} \label{theorem 4.6}
Let $M$ be an $R$-module and $K\in Spec^L(M)$. If $\overline{Ann_R(soc(K))}$ is a minimal prime ideal of $\overline{R}$, then $\nu^s(K)$ is an irreducible component of $Spec^L(M)$. If $\varphi$ is surjective, then the converse is also true.
\end{theorem}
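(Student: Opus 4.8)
The plan is to translate containment of the closed sets $\nu^s(K)$ into reverse containment of the prime ideals $\varphi(K)=\overline{Ann_R(soc(K))}$, and then read off maximality of $\nu^s(K)$ as minimality of $\varphi(K)$. I would first recall that for $K\in Spec^L(M)$ the ideal $\sqrt{Ann_R(K)}=Ann_R(soc(K))$ is prime and contains $Ann_R(M)$, so $\varphi(K)=\overline{Ann_R(soc(K))}\in Spec(\overline{R})$, and that $\nu^s(K)$ is an irreducible closed set by Theorem \ref{theorem 4.2}. The crucial observation, which I would record before splitting into the two implications, is that for $K_1,K_2\in Spec^L(M)$ one has $\nu^s(K_1)\subseteq\nu^s(K_2)$ if and only if $\varphi(K_2)\subseteq\varphi(K_1)$: testing the membership condition at $Q=K_1$ gives $\sqrt{Ann_R(K_2)}\subseteq\sqrt{Ann_R(K_1)}$, and the reverse implication is immediate since $\sqrt{Ann_R(Q)}$ is radical for every $Q$, while Lemma \ref{Lemma 2.4}(3) shows the containment is strict exactly when the prime images differ.

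For the forward implication, suppose $\varphi(K)=\overline{Ann_R(soc(K))}$ is a minimal prime of $\overline{R}$, and let $Y$ be any irreducible closed subset of $Spec^L(M)$ with $\nu^s(K)\subseteq Y$. Since $Y$ is closed, $Y=\nu^s(H(Y))$ by Proposition \ref{proposition 4.1}, and since $Y$ is irreducible, $Ann_R(H(Y))$ is prime by Theorem \ref{theorem 4.4}(2); as $Ann_R(M)\subseteq Ann_R(H(Y))$, we get $\overline{Ann_R(H(Y))}\in Spec(\overline{R})$. From $K\in\nu^s(K)\subseteq Y=\nu^s(H(Y))$ I obtain $Ann_R(H(Y))\subseteq\sqrt{Ann_R(K)}=Ann_R(soc(K))$, hence $\overline{Ann_R(H(Y))}\subseteq\varphi(K)$. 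Minimality of $\varphi(K)$ forces equality, so $Ann_R(H(Y))=Ann_R(soc(K))$; then Lemma \ref{Lemma 2.4}(3) and (7) give $Y=\nu^s(H(Y))=\nu^s(soc(K))=\nu^s(K)$. Thus $\nu^s(K)$ admits no strictly larger irreducible closed set and is therefore an irreducible component.

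For the converse, assume $\varphi$ is surjective and that $\nu^s(K)$ is an irreducible component. Supposing $\varphi(K)$ were not minimal in $\overline{R}$, I would choose a prime $\mathfrak{p}\subsetneq\varphi(K)$; by surjectivity $\mathfrak{p}=\varphi(K^\prime)$ for some $K^\prime\in Spec^L(M)$. Then $\varphi(K^\prime)\subsetneq\varphi(K)$, so by the containment criterion above $\nu^s(K)\subsetneq\nu^s(K^\prime)$, where $\nu^s(K^\prime)$ is irreducible closed by Theorem \ref{theorem 4.2}. This contradicts the maximality of $\nu^s(K)$, so $\varphi(K)$ must be a minimal prime.

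The routine parts are the two membership computations; the only genuine content lies in the containment criterion of the first paragraph and, in the converse, the use of surjectivity of $\varphi$ to lift a strictly smaller prime of $\overline{R}$ back to a point $K^\prime$ of $Spec^L(M)$, which is exactly where that hypothesis becomes unavoidable. I expect the main obstacle to be verifying carefully that $Y=\nu^s(K)$ in the forward direction, namely passing from the ideal equality $Ann_R(H(Y))=Ann_R(soc(K))$ to the equality of varieties through Lemma \ref{Lemma 2.4}(3) and (7), since $H(Y)$ need not itself lie in $Spec^L(M)$.
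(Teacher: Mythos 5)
Your proof is correct and follows essentially the same route as the paper's: in the forward direction you combine Theorem \ref{theorem 4.4}(2) with minimality of $\varphi(K)$ to force $\overline{Ann_R(H(Y))}=\overline{Ann_R(soc(K))}$ and then pass to $Y=\nu^s(H(Y))=\nu^s(soc(K))=\nu^s(K)$ via Lemma \ref{Lemma 2.4} (the paper routes this last step through Proposition \ref{Proposition 2.9} and $\varphi^{-1}(V^{\overline{R}}(\cdot))$, but the content is identical), and in the converse you lift a strictly smaller prime through the surjectivity of $\varphi$ and invoke maximality of $\nu^s(K)$, exactly as the paper does, merely phrased by contradiction instead of directly. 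The only cosmetic deviation is that you quantify over irreducible \emph{closed} sets $Y$ while the paper allows arbitrary irreducible $Y\supseteq\nu^s(K)$; these are equivalent by the standard fact, recorded in the paper's preliminaries, that a set is irreducible if and only if its closure is.
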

\begin{proof}
Using theorem\ref{theorem 4.2}, $\nu^s(K)$ is irreducible and it is sufficient to prove that it is a maximal irreducible. So let $Y$ be irreducible subset of $Spec^L(M)$ with $\nu^s(K)\subseteq Y$ and it remains to show that $Y=\nu^s(K)$. As $K\in\nu^s(K)\subseteq Y$, then $K\in Y$ and hence $soc(K)\subseteq H(Y)$ which implies that $\overline{Ann_R(H(Y))}\subseteq \overline{Ann_R(soc(K))}$. Using Theorem \ref{theorem 4.4}(2), we get $\overline{Ann_R(H(Y))}\in Spec(\overline{R})$. But $\overline{Ann_R(soc(K))}$ is a minimal prime ideal of $\overline{R}$. This follows that $\overline{Ann_R(H(Y))}=\overline{Ann_R(soc(K))}$ and hence $V^{\overline{R}}(\overline{Ann_R(H(Y))})=V^{\overline{R}}(\overline{Ann_R(soc(K))})$. By Lemma \ref{Lemma 2.4}(6), (7) and Proposition \ref{Proposition 2.9}, we get $\nu^s(H(Y))=\varphi^{-1}(V^{\overline{R}}(\overline{Ann_R(H(Y))}))=\varphi^{-1}(V^{\overline{R}}(\overline{Ann_R(soc(K))}))=\nu^s(soc(K))=\nu^s(K)$. Since $Y\subseteq\nu^s(H(Y))$, then $Y\subseteq\nu^s(K)$ and thus $Y=\nu^s(K)$. For the converse, suppose that $\varphi$ is surjective. Let $\overline{p}\in Spec(\overline{R})$ such that $\overline{p}\subseteq \overline{Ann_R(soc(K))}$ and it is sufficient to show that $\overline{p}=\overline{Ann_R(soc(K))}$. Since $\varphi$ is surjective, then there exists $K^\prime\in Spec^L(M)$ such that $Ann_R(soc(K^\prime))=p$. Then $Ann_R(soc(K^\prime))\subseteq Ann_R(soc(K))$ and hence $\nu^s(K)\subseteq\nu^s(K^\prime)$. Since $\nu^s(K)$ is an irreducible component of $Spec^L(M)$ and $\nu^s(K^\prime)$ is irreducible by Theorem \ref{theorem 4.2}, then $\nu^s(K)=\nu^s(K^\prime)$. By Lemma \ref{Lemma 2.4}(3), we obtain $\sqrt{Ann_R(K)}=\sqrt{Ann_R(K^\prime)}$ and thus $Ann_R(soc(K))=Ann_R(soc(K^\prime)$. Therefore, $\overline{p}=\overline{Ann_R(soc(K^\prime))}=\overline{Ann_R(soc(K))}$, as desired.
\end{proof}
\begin{corollary}
Let $M$ be an $R$-module. If $\varphi$ is surjective, then the correspondence $\nu^s(K)\rightarrow\overline{Ann_R(soc(K))}$ provides a bijection from the set of irreducible components of $Spec^L(M)$ to the set of minimal prime ideals of $\overline{R}$.
\end{corollary}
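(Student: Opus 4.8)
The plan is to establish a bijection between the set of irreducible components of $Spec^L(M)$ and the set of minimal prime ideals of $\overline{R}$ via the assignment $\nu^s(K)\mapsto\overline{Ann_R(soc(K))}$, using the two previous theorems as the engine. First I would invoke Theorem \ref{theorem 4.5}: since $\varphi$ is surjective, every irreducible closed subset of $Spec^L(M)$ has the form $\nu^s(K)$ for some $K\in Spec^L(M)$; in particular every irreducible \emph{component} is of this form, so the correspondence is defined on every irreducible component. The content of Theorem \ref{theorem 4.6} then tells me exactly which $K$ give irreducible components: with $\varphi$ surjective, $\nu^s(K)$ is an irreducible component if and only if $\overline{Ann_R(soc(K))}$ is a minimal prime ideal of $\overline{R}$. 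Hence the map lands in the set of minimal primes, and every minimal prime $\overline{p}$ is hit (by surjectivity of $\varphi$ there is $K'$ with $Ann_R(soc(K'))=p$, and then $\nu^s(K')$ is a component by Theorem \ref{theorem 4.6}), giving surjectivity of the correspondence.

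For well-definedness and injectivity I would lean on Lemma \ref{Lemma 2.4}(3). The map $\nu^s(K)\mapsto\overline{Ann_R(soc(K))}$ must not depend on the representative $K$: if $\nu^s(K)=\nu^s(K')$, then Lemma \ref{Lemma 2.4}(3) yields $\sqrt{Ann_R(K)}=\sqrt{Ann_R(K')}$, and since $K,K'\in Spec^L(M)$ we have $Ann_R(soc(K))=\sqrt{Ann_R(K)}=\sqrt{Ann_R(K')}=Ann_R(soc(K'))$, so the image is the same element of $\overline{R}$. The same equivalence runs in reverse for injectivity: if $\overline{Ann_R(soc(K))}=\overline{Ann_R(soc(K'))}$, then $\sqrt{Ann_R(K)}=\sqrt{Ann_R(K')}$, and Lemma \ref{Lemma 2.4}(3) gives $\nu^s(K)=\nu^s(K')$. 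Thus the correspondence is a genuine injection on the set of irreducible components.

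The step I expect to require the most care is the matching between components and minimal primes rather than merely between closed sets and primes, i.e.\ making sure I am quoting the \emph{maximality} content of Theorem \ref{theorem 4.6} correctly in both directions. Surjectivity of $\varphi$ is used three times (to realize every prime as some $Ann_R(soc(K'))$, to apply Theorem \ref{theorem 4.5}, and to get the converse half of Theorem \ref{theorem 4.6}), so I would keep that hypothesis visibly in force throughout. Once well-definedness, injectivity, surjectivity, and the characterization of the image are assembled, the bijection is immediate and the proof reduces to citing Theorems \ref{theorem 4.5} and \ref{theorem 4.6} together with Lemma \ref{Lemma 2.4}(3); no new computation beyond these is needed.
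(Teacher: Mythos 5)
Your proposal is correct and follows essentially the same route as the paper, whose proof simply cites Theorems \ref{theorem 4.5} and \ref{theorem 4.6}; you have filled in the ``straightforward'' details exactly as intended, using Theorem \ref{theorem 4.5} for the form of components, Theorem \ref{theorem 4.6} for the matching with minimal primes, and Lemma \ref{Lemma 2.4}(3) together with the defining condition $Ann_R(soc(K))=\sqrt{Ann_R(K)}$ for well-definedness and injectivity.
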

\begin{proof}
The proof is straightforward by Theorem \ref{theorem 4.5} and Theorem \ref{theorem 4.6}.
\end{proof}
\begin{corollary}\label{corollary 4.8}
Let $M$ be an $R$-module such that $Spec^L(M)\neq\emptyset$ and let $\mathcal{A}=\{K\in Spec^L(M)\,\mid\, \overline{Ann_R(soc(K))}$ is a minimal prime ideal of $\overline{R}\}$. If $\varphi$ is surjective, then we have the following:
\begin{enumerate}
\item $\mathcal{A}\neq\emptyset$.
\item $\mathcal{B}=\{\nu^s(K)\,\mid\, K\in\mathcal{A}\}$ is the set of all irreducible components of $Spec^L(M)$.
\item $Spec^L(M)=\underset{K\in \mathcal{A}}{\bigcup} \nu^s(K)$.
\item $Spec(\overline{R})=\underset{K\in\mathcal{A}}{\bigcup}V^{\overline{R}}(\overline{Ann_R(K)})$.
\item $Spec^s(M)=\underset{K\in \mathcal{A}}{\bigcup} V^s(K)$.
\item If $M\in Spec^L(M)$, then the only irreducible component of $Spec^L(M)$ is $Spec^L(M)$ itself.
\end{enumerate}
\end{corollary}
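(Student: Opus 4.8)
The plan is to assemble the six parts from the structural results already established in this section, principally Theorem \ref{theorem 4.6} together with Theorem \ref{theorem 4.5} and the bijection $\nu^s(K)\mapsto\overline{Ann_R(soc(K))}$ recorded in the corollary preceding this one. First I would dispose of (1): since $Spec^L(M)\neq\emptyset$ and $\varphi$ is surjective, $Spec(\overline{R})=\varphi(Spec^L(M))$ is non-empty, so $\overline{R}$ is a non-zero ring and therefore possesses at least one minimal prime ideal $\overline{p}$. Surjectivity of $\varphi$ yields a $K\in Spec^L(M)$ with $\varphi(K)=\overline{\sqrt{Ann_R(K)}}=\overline{Ann_R(soc(K))}=\overline{p}$, whence $K\in\mathcal{A}$ and $\mathcal{A}\neq\emptyset$. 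For (2), the inclusion $\mathcal{B}\subseteq\{\text{irreducible components}\}$ is exactly the forward direction of Theorem \ref{theorem 4.6}. For the reverse inclusion, an irreducible component $C$ is in particular an irreducible closed set, so by Theorem \ref{theorem 4.5} it has the form $C=\nu^s(K)$ for some $K\in Spec^L(M)$; the converse direction of Theorem \ref{theorem 4.6} then forces $\overline{Ann_R(soc(K))}$ to be minimal, i.e. $K\in\mathcal{A}$, so $C\in\mathcal{B}$.

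Part (3) would then follow from (2) together with the standard fact that every topological space is the union of its irreducible components (each singleton is irreducible and, by Zorn's lemma applied to chains of irreducible subsets, whose unions are again irreducible, lies in a maximal one). Alternatively I could argue (3) directly: given any $K\in Spec^L(M)$, the prime $\overline{Ann_R(soc(K))}$ contains a minimal prime $\overline{q}$ of $\overline{R}$, which by surjectivity equals $\overline{Ann_R(soc(K'))}$ for some $K'\in\mathcal{A}$, and then $Ann_R(soc(K'))\subseteq Ann_R(soc(K))$ places $K$ in $\nu^s(K')$. For (4) I would simply apply the surjective closed map $\varphi$ to the equality in (3): by Proposition \ref{Proposition 2.10}, $\varphi(\nu^s(K))=V^{\overline{R}}(\overline{Ann_R(K)})$, so
\[
Spec(\overline{R})=\varphi(Spec^L(M))=\varphi\Big(\bigcup_{K\in\mathcal{A}}\nu^s(K)\Big)=\bigcup_{K\in\mathcal{A}}V^{\overline{R}}(\overline{Ann_R(K)}).
\]
Part (5) comes from intersecting (3) with $Spec^s(M)$ and invoking Lemma \ref{Lemma 2.4}(1), which gives $V^s(K)=\nu^s(K)\cap Spec^s(M)$; since $Spec^s(M)\subseteq Spec^L(M)$, distributing the intersection over the union yields $\bigcup_{K\in\mathcal{A}}V^s(K)=\big(\bigcup_{K\in\mathcal{A}}\nu^s(K)\big)\cap Spec^s(M)=Spec^L(M)\cap Spec^s(M)=Spec^s(M)$.

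Finally, for (6), if $M\in Spec^L(M)$ then Theorem \ref{theorem 4.2} gives that $Spec^L(M)=\nu^s(M)$ is itself irreducible; an irreducible space is its own unique maximal irreducible subset, so it is the sole irreducible component. I do not anticipate a genuine obstacle here, as each part reduces to a previously proved statement; the only points that require a little care are checking in (1) that $\overline{R}\neq 0$, so that a minimal prime exists, and in (4) that one may push the union through $\varphi$ as an equality — both of which hinge on the surjectivity hypothesis on $\varphi$.
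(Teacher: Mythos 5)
Your proposal is correct, and while parts (2)--(4) track the paper's argument essentially verbatim (Theorems \ref{theorem 4.5} and \ref{theorem 4.6} plus the standard facts that irreducible components are closed and cover the space, then pushing (3) forward through the surjective $\varphi$ via Proposition \ref{Proposition 2.10}), you take genuinely different routes in (1), (5) and (6). For (1) the paper argues topologically: it picks any $K\in Spec^L(M)$, embeds $\{K\}$ in an irreducible component, identifies that component as $\nu^s(K^\prime)$ by Theorem \ref{theorem 4.5}, and applies the converse direction of Theorem \ref{theorem 4.6} to conclude $K^\prime\in\mathcal{A}$; your ring-theoretic shortcut --- $Spec(\overline{R})=\varphi(Spec^L(M))\neq\emptyset$ forces $\overline{R}\neq 0$, so a minimal prime exists and pulls back along the surjection $\varphi$, with $\varphi(K)=\overline{Ann_R(soc(K))}$ by the defining condition of $Spec^L(M)$ --- is equally valid and more direct, though both routes ultimately lean on Zorn's lemma (yours for minimal primes, the paper's for maximal irreducible subsets). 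For (5) the paper pulls (4) back through $\psi^{-1}$, invoking Lemma \ref{Lemma 2.8}(1) together with the external identity $V^s(Ann_M(Ann_R(N)))=V^s(N)$ from \cite{farshadifar2013modules}; your argument --- intersect (3) with $Spec^s(M)$, distribute, and use $V^s(K)=\nu^s(K)\cap Spec^s(M)$ from Lemma \ref{Lemma 2.4}(1) together with $Spec^s(M)\subseteq Spec^L(M)$ --- is shorter, bypasses $\psi$ and the external citation entirely, and does not even route through (4). For (6) the paper verifies $\nu^s(K)=\nu^s(M)=Spec^L(M)$ for every $K\in\mathcal{A}$ by a minimality argument on $\overline{\sqrt{Ann_R(K)}}$ using Lemma \ref{Lemma 2.4}(3), whereas you simply quote the last clause of Theorem \ref{theorem 4.2} (if $M\in Spec^L(M)$ the whole space is irreducible) and note that an irreducible space is its own unique irreducible component --- cleaner, and it incidentally shows (6) does not need the surjectivity hypothesis at all. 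All your citations are applied within their hypotheses, so there is no gap.
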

\begin{proof}
(1) Let $K\in Spec^L(M)$. Since any irreducible subset of a topological space is contained in a maximal irreducible subset of it, then there exists an irreducible component $Y$ in $Spec^L(M)$ such that $\{K\}\subseteq Y$. But every irreducible component of a topological space is closed and hence $Y=\nu^s(K^\prime)$ for some $K^\prime\in Spec^L(M)$ by Theorem \ref{theorem 4.5}. This follows that $\overline{Ann_R(soc(K^\prime))}$ is a minimal prime ideal of $\overline{R}$ by Theorem \ref{theorem 4.6}. Therefore, $K^\prime\in\mathcal{A}$, as desired.\\
(2) follows from Theorem \ref{theorem 4.5}, Theorem \ref{theorem 4.6} and the fact that the irreducible components of a topological space are closed subsets of it.\\
(3) Since the irreducible components of a topological space cover it, then $Spec^L(M)=\underset{K\in\mathcal{A}}{\bigcup}\nu^s(K)$ by (2).\\
(4) Since $Spec^L(M)=\underset{K\in\mathcal{A}}{\bigcup}\nu^s(K)$ and $\varphi$ is surjective, then $Spec(\overline{R})=\varphi(\underset{K\in\mathcal{A}}{\bigcup}\nu^s(K))=\underset{K\in\mathcal{A}}{\bigcup}\varphi(\nu^s(K))=\underset{K\in\mathcal{A}}{\bigcup}V^{\overline{R}}(\overline{Ann_R(K)})$ using Proposition \ref{Proposition 2.10}. \\
(5) Since $Spec(\overline{R})=\underset{K\in\mathcal{A}}{\bigcup}V^{\overline{R}}(\overline{Ann_R(K)})$, then $Spec^s(M)=\psi^{-1}(Spec(\overline{R}))=\psi^{-1}(\underset{K\in \mathcal{A}}{\bigcup}V^{\overline{R}}(\overline{Ann_R(K)}))=\underset{K\in\mathcal{A}}{\bigcup}\psi^{-1}(V^{\overline{R}}(\overline{Ann_R(K)}))=\underset{K\in\mathcal{A}}{\bigcup}V^s(Ann_M(Ann_R(K)))=\underset{K\in\mathcal{A}}{\bigcup}V^s(K)$ by Lemma \ref{Lemma 2.8}(1) and \cite[Lemma 3.3(b)]{farshadifar2013modules}.\\
(6) Suppose that $M\in Spec^L(M)$. Then $M$ is a secondary submodule of $M$ and hence $\overline{\sqrt{Ann_R(M)}}\in Spec(\overline{R})$. But for any $K\in\mathcal{A}$, we have $K\subseteq M$ and hence $\overline{\sqrt{Ann_R(M)}}\subseteq \overline{\sqrt{Ann_R(K)}}$. Thus $\sqrt{Ann_R(M)}=\sqrt{Ann_R(K)}$ as $\overline{\sqrt{Ann_R(K)}}$ is a minimal prime ideal of $\overline{R}$. This follows that $\nu^s(K)=\nu^s(M)=Spec^L(M)$ by Lemma \ref{Lemma 2.4}(3). So we get $\nu^s(K)=Spec^L(M)$ for any $K\in\mathcal{A}$. By (2), we have $\mathcal{B}=\{\nu^s(K)\,\mid\, K\in\mathcal{A}\}=\{Spec^L(M)\}$ is the set of all irreducible components of $Spec^L(M)$, as needed.
\end{proof}
Recall that a topological space $Z$ is said to be a $T_0$-space if the closure of distinct points are distinct. Note that $Spec^L(M)$ for an $R$-module $M$ is not always a $T_0$-space. For example, if $M$ is a vector space over a filed $F$ with $dim(M)>1$, then $\exists N_1, N_2\in Spec^s(M)\subseteq Spec^L(M)$ such that $N_1\neq N_2$. But by Corollary \ref{corollary 3.3}, the Zariski topology on $Spec^L(M)$ is the trivial topology. Since the trivial topology on any set $X$ is a $T_0$-space if and only if $|X|\leq 1$, then the Zariski topology on $Spec^L(M)$ is not a $T_0$-space. In fact, if $M$ is a vector space, then the Zariski topology on $Spec^L(M)$ is a $T_0$-space if and only if $dim(M)\leq 1$.
\begin{theorem}\label{theorem 4.9}
Let $M$ be an $R$-module. Then $Spec^L(M)$ is a $T_0$-space if and only if $|Spec^L_p(M)|\leq 1$ for any $p\in Spec(R)$.
\end{theorem}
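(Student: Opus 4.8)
The plan is to prove the biconditional characterization of the $T_0$ property directly from the key tool already established in Proposition \ref{Proposition 2.6}. Recall that a topological space is $T_0$ if and only if distinct points have distinct closures. By Theorem \ref{theorem 4.2}, the closure of a singleton is completely determined: $Cl(\{K\})=\nu^s(K)$ for every $K\in Spec^L(M)$. This reduces the $T_0$ condition to the purely algebraic statement that the assignment $K\mapsto\nu^s(K)$ is injective on $Spec^L(M)$, which is exactly condition (1) of Proposition \ref{Proposition 2.6}. Since that proposition already shows condition (1) is equivalent to $|Spec^L_p(M)|\leq 1$ for all $p\in Spec(R)$ (its condition (2)), the entire proof becomes a matter of threading these two facts together.

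First I would translate the topological hypothesis into the language of closures: $Spec^L(M)$ is $T_0$ precisely when, for all $K, K'\in Spec^L(M)$, the equality $Cl(\{K\})=Cl(\{K'\})$ forces $K=K'$. Next I would invoke Theorem \ref{theorem 4.2} to rewrite $Cl(\{K\})=\nu^s(K)$ and $Cl(\{K'\})=\nu^s(K')$, so the $T_0$ condition is seen to be literally identical to statement (1) of Proposition \ref{Proposition 2.6}, namely that $\nu^s(K)=\nu^s(K')$ implies $K=K'$. Finally, applying the equivalence (1)$\Leftrightarrow$(2) of Proposition \ref{Proposition 2.6}, this is in turn equivalent to $|Spec^L_p(M)|\leq 1$ for every prime $p$, which is the desired conclusion.

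I do not anticipate any serious obstacle here, since essentially all the technical content has been front-loaded into Proposition \ref{Proposition 2.6} and Theorem \ref{theorem 4.2}. The only point requiring a little care is the formal passage from the standard definition of $T_0$ (distinct points have distinct closures) to the contrapositive formulation (equal closures imply equal points); these are logically the same, but I would state it cleanly so the reader sees the match with Proposition \ref{Proposition 2.6}(1) is exact. One should also note that the standing assumption $Spec^L(M)\neq\emptyset$ is in force, so there is no degenerate edge case to handle separately. In short, the proof is a two-line reduction: rewrite singleton closures via Theorem \ref{theorem 4.2}, then quote the equivalence of Proposition \ref{Proposition 2.6}.
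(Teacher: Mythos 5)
Your proof is correct and takes essentially the same route as the paper: the paper likewise identifies $Cl(\{K\})=\nu^s(K)$ (Theorem \ref{theorem 4.2}) and then argues via Lemma \ref{Lemma 2.4}(3) that equality of these closures is equivalent to $\sqrt{Ann_R(K)}=\sqrt{Ann_R(K^\prime)}$, which is precisely the content of the equivalence (1)$\Leftrightarrow$(2) of Proposition \ref{Proposition 2.6} that you cite. The only difference is cosmetic --- the paper inlines that equivalence instead of quoting the proposition, a link the authors themselves acknowledge immediately afterwards in Corollary \ref{corollary 4.10}.
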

\begin{proof}
Suppose that $Spec^L(M)$ is a $T_0$-space and let $K, K^\prime\in Spec^L_p(M)$. Then $\sqrt{Ann_R(K)}=\sqrt{Ann_R(K^\prime)}$ and thus $Cl(\{K\})=\nu^s(K)=\nu^s(K^\prime)=Cl(\{K^\prime\})$. Therefore, $K=K^\prime$. Conversely, let $K, K^\prime\in Spec^L(M)$ such that $K\neq K^\prime$ and assume by way of contradiction that $Cl(\{K\})=Cl(\{K^\prime\})$. Then $\nu^s(K)=\nu^s(K^\prime)$ and hence $\sqrt{Ann_R(soc(K))}=\sqrt{Ann_R(soc(K^\prime))}\in Spec(R)$. By hypothesis, $K=K^\prime$, a contradiction. This means that $Cl(\{K\})\neq Cl(\{K^\prime\})$. Therefore, $Spec^L(M)$ is a $T_0$-space.
\end{proof}
The following is an easy result for Proposition \ref{Proposition 2.6} and Theorem \ref{theorem 4.9}.
\begin{corollary}\label{corollary 4.10}
Let $M$ be an $R$-module. Then the following statements are equivalent:
\begin{enumerate}
\item $Spec^L(M)$ is a $T_0$-space.
\item $|Spec^L_p(M)|\leq 1$ for any $p\in Spec(R)$.
\item If $\nu^s(K_1)=\nu^s(K_2)$, then $K_1=K_2$ for any $K_1, K_2\in Spec^L(M)$.
\item The natural map $\varphi$ is injective.
\end{enumerate}
\end{corollary}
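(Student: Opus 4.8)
The plan is to establish the equivalence of the four statements by citing two results already proved in this paper and then closing a short cycle of implications. The statement to prove is Corollary \ref{corollary 4.10}, which asserts the equivalence of: (1) $Spec^L(M)$ is $T_0$; (2) $|Spec^L_p(M)|\leq 1$ for all $p\in Spec(R)$; (3) $\nu^s(K_1)=\nu^s(K_2)$ implies $K_1=K_2$; and (4) $\varphi$ is injective. The observation driving the whole argument is that all of these are different phrasings of the same separation property, and the bulk of the work has already been done in Proposition \ref{Proposition 2.6} and Theorem \ref{theorem 4.9}.

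First I would note that Theorem \ref{theorem 4.9} gives directly the equivalence $(1)\Leftrightarrow(2)$, since that theorem states precisely that $Spec^L(M)$ is a $T_0$-space if and only if $|Spec^L_p(M)|\leq 1$ for every $p\in Spec(R)$. Next I would invoke Proposition \ref{Proposition 2.6}, which establishes the equivalence of three conditions that match statements (3), (2), and (4) of the corollary respectively: condition (1) of that proposition is exactly statement (3) here, condition (2) is statement (2), and condition (3) is statement (4). Hence Proposition \ref{Proposition 2.6} yields $(3)\Leftrightarrow(2)\Leftrightarrow(4)$ immediately.

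Combining these two citations, statements (2), (3), and (4) are mutually equivalent by Proposition \ref{Proposition 2.6}, and statement (1) is equivalent to statement (2) by Theorem \ref{theorem 4.9}; therefore all four are equivalent. The argument is essentially a bookkeeping exercise in matching the hypotheses of the two prior results to the four numbered conditions, so no genuinely new computation is required.

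I do not anticipate a real obstacle here, since the corollary is explicitly labelled as ``an easy result for Proposition \ref{Proposition 2.6} and Theorem \ref{theorem 4.9}.'' The only point demanding care is the verification that the wording of statement (3) in the corollary coincides exactly with condition (1) of Proposition \ref{Proposition 2.6} — both say that distinct elements of $Spec^L(M)$ must have distinct $\nu^s$-varieties — and that statement (2) is literally condition (2) there. Once that identification is made explicit, the proof reduces to a single sentence chaining the two prior equivalences together.
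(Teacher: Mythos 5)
Your proposal is correct and matches the paper's own justification exactly: the paper states this corollary with no separate proof, labelling it an easy consequence of Proposition \ref{Proposition 2.6} (which gives $(2)\Leftrightarrow(3)\Leftrightarrow(4)$ under the identification you describe) and Theorem \ref{theorem 4.9} (which gives $(1)\Leftrightarrow(2)$). Your matching of the numbered conditions to those two prior results is accurate, so nothing further is needed.
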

Let $Z$ be a topological space. Recall that $Z$ is said to be a spectral space if it is homemorphic to the prime spectrum of a ring equipped with the Zariski topology. By Hochester's Characterization of spectral spaces\cite[p.52, Proposition 4]{hochster1969prime}, $Z$ is a spectral space if and only if:
\begin{enumerate}
    \item[(a)]$Z$ is a $T_0$-space.
    \item[(b)]$Z$ is quasi-compact.
    \item[(c)]Any irreducible closed subset of $Z$ has a generic point.
    \item[(d)]The quasi-compact open subsets of $Z$ are closed under finite intersection and form an open base.
\end{enumerate}
\begin{theorem}\label{theorem 4.11}
Let $M$ be an $R$-module. If $\varphi$ is surjective, then $Spec^L(M)$ is a spectral space if and only if $Spec^L(M)$ is a $T_0$-space.
\end{theorem}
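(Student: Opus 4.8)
The plan is to invoke Hochster's characterization of spectral spaces directly, since under the standing hypothesis that $\varphi$ is surjective, the four conditions (a)--(d) listed just before the statement have effectively already been verified one by one in the preceding sections. The theorem is therefore really a packaging result: it collects Theorems \ref{Theorem 3.4}, \ref{Theorem 3.5}, and \ref{theorem 4.5} into a single biconditional via Hochster's criterion.

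The forward implication is immediate and requires no hypothesis on $\varphi$. By condition (a) of Hochster's characterization, every spectral space is a $T_0$-space; hence if $Spec^L(M)$ is spectral, it is in particular $T_0$.

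For the converse, I would assume that $Spec^L(M)$ is a $T_0$-space and check the four Hochster conditions in turn, each time using that $\varphi$ is surjective. Condition (a) is precisely the standing assumption. Condition (b), quasi-compactness of $Spec^L(M)$, is supplied by Theorem \ref{Theorem 3.4}, which gives $Spec^L(M)=E_1$ quasi-compact whenever $\varphi$ is surjective. Condition (c), that every irreducible closed subset of $Spec^L(M)$ has a generic point, is exactly the concluding assertion of Theorem \ref{theorem 4.5}, again available because $\varphi$ is surjective. Condition (d), that the quasi-compact open subsets are closed under finite intersection and form an open base, is the content of Theorem \ref{Theorem 3.5}. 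With all four conditions in hand, Hochster's criterion yields that $Spec^L(M)$ is a spectral space.

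I do not expect a genuine obstacle in this theorem itself, since once $\varphi$ is surjective the argument is a mechanical synthesis of results proved earlier. The only point to confirm with care is that each of Theorems \ref{Theorem 3.4}, \ref{Theorem 3.5}, and \ref{theorem 4.5} was indeed established under exactly the surjectivity hypothesis assumed here, so that conditions (b), (c), and (d) are simultaneously guaranteed; the substantive work underlying all three has already been carried out in Sections 3 and 4.
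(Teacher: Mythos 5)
Your proposal is correct and follows exactly the paper's route: the paper's proof is the one-line citation of Theorems \ref{Theorem 3.4}, \ref{Theorem 3.5} and \ref{theorem 4.5}, which together with the standing $T_0$ hypothesis supply the four conditions of Hochster's characterization, just as you spell out. Your only addition is to make explicit the (trivial) forward implication and the check that all three cited theorems were proved under the same surjectivity hypothesis on $\varphi$, which is a faithful expansion of the paper's argument rather than a different one.
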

\begin{proof}
By Theorem \ref{Theorem 3.4}, Theorem \ref{Theorem 3.5} and Theorem \ref{theorem 4.5}.
\end{proof}
The following result is obtained by combining Corollary \ref{corollary 2.11}, Corollary \ref{corollary 4.10} and Theorem \ref{theorem 4.11}.
\begin{corollary}\label{corollary 4.12}
Let $M$ be an $R$-module. If $\varphi$ is surjective, then the following statements are equivalent:
\begin{enumerate}
\item $Spec^L(M)$ is a spectral space.
\item $Spec^L(M)$ is a $T_0$-space.
\item $|Spec^L_p(M)|\leq 1$ for every $p\in Spec(R)$.
\item $\varphi$ is injective.
\item $Spec^L(M)$ is homeomorphic to $Spec(\overline{R})$ under $\varphi$.
\end{enumerate}
\end{corollary}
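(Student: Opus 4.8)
The plan is to prove the corollary purely by assembling the three results it is built from, with no new computation required; the surjectivity of $\varphi$ is the standing hypothesis throughout. I would organize the five statements around statement (4) (injectivity of $\varphi$) as a hub, since each of the cited results connects naturally to it, and then close the remaining links.

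First I would dispatch the block $(2)\Leftrightarrow(3)\Leftrightarrow(4)$. These are precisely the statements ``$Spec^L(M)$ is a $T_0$-space'', ``$|Spec^L_p(M)|\leq 1$ for every $p\in Spec(R)$'', and ``$\varphi$ is injective''. Corollary \ref{corollary 4.10} already records the equivalence of exactly these three conditions (together with the auxiliary $\nu^s$-condition, which I do not need to list here), and that corollary holds for an arbitrary module, so no use of surjectivity is needed at this step. Next I would handle $(1)\Leftrightarrow(2)$: under the surjectivity hypothesis on $\varphi$, Theorem \ref{theorem 4.11} states exactly that $Spec^L(M)$ is a spectral space if and only if it is a $T_0$-space, which is this equivalence verbatim.

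It remains to connect statement (5) to the hub, and this is where the surjectivity hypothesis does the work. Assuming (4), i.e. $\varphi$ injective, I would combine this with the standing surjectivity of $\varphi$ to conclude that $\varphi$ is bijective; Corollary \ref{corollary 2.11} then upgrades bijectivity to the assertion that $\varphi$ is a homeomorphism, which is precisely statement (5). For the reverse implication $(5)\Rightarrow(4)$, a homeomorphism is in particular injective, giving (4) immediately. Chaining $(1)\Leftrightarrow(2)\Leftrightarrow(3)\Leftrightarrow(4)\Leftrightarrow(5)$ then yields the full equivalence.

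The proof presents essentially no obstacle beyond bookkeeping; the only point requiring care is the $(4)\Rightarrow(5)$ direction, where one must remember that Corollary \ref{corollary 2.11} is phrased in terms of bijectivity, so the ambient surjectivity hypothesis has to be invoked explicitly to pass from the injectivity asserted in (4) to the bijectivity that Corollary \ref{corollary 2.11} consumes.
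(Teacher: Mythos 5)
Your proposal is correct and follows essentially the same route as the paper, which proves the corollary precisely by combining Corollary \ref{corollary 2.11}, Corollary \ref{corollary 4.10} and Theorem \ref{theorem 4.11}. Your write-up merely makes explicit the bookkeeping the paper leaves implicit, in particular the point that the standing surjectivity must be combined with the injectivity in (4) to obtain the bijectivity that Corollary \ref{corollary 2.11} requires.
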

Let $M$ be an $R$-module. By Corollary \ref{corollary 4.12}, the surjectivity of $\varphi$ implies that $Spec^L(M)$ is a spectral space if and only if $|Spec^L_p(M)|\leq 1$ for any $p\in Spec(R)$. In the following proposition, we replace the surjectivity of $\varphi$ by the finiteness of $Spec^L(M)$.
\begin{proposition}
Let $M$ be an $R$-module such that $Spec^L(M)$ is a non-empty finite set. Then $Spec^L(M)$ is a spectral space if and only if $|Spec^L_p(M)|\leq 1$ for every $p\in Spec(R)$.
\end{proposition}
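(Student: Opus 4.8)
The plan is to verify Hochster's four conditions (a)--(d) and to pin down which of them come for free from the finiteness hypothesis, concentrating the real work in condition (c), the existence of generic points for irreducible closed sets; everything else will be either a direct translation of an earlier result or a triviality for finite spaces.

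For the forward implication I would observe that if $Spec^L(M)$ is a spectral space, then it is in particular a $T_0$-space by Hochster's condition (a), and Theorem \ref{theorem 4.9} immediately upgrades this to $|Spec^L_p(M)|\le 1$ for every $p\in Spec(R)$. This direction uses no finiteness at all. For the converse, assume $|Spec^L_p(M)|\le 1$ for all $p$. Theorem \ref{theorem 4.9} again gives that $Spec^L(M)$ is a $T_0$-space, which is condition (a). Since $Spec^L(M)$ is finite, condition (b) (quasi-compactness) is immediate, and condition (d) is equally immediate: in a finite space every open set is quasi-compact, so the quasi-compact open sets are precisely all open sets, which trivially form a base and are closed under finite intersection.

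The genuinely substantive point is condition (c). Given an irreducible closed subset $Y$, I would first apply Theorem \ref{theorem 4.4}(2) to conclude that $Ann_R(H(Y))$ is a prime ideal of $R$. The crucial observation is that, because $Y$ is finite, the identity $Ann_R(H(Y))=\bigcap_{K\in Y}Ann_R(soc(K))$ exhibits this prime as a \emph{finite} intersection of the prime ideals $Ann_R(soc(K))=\sqrt{Ann_R(K)}$. Since $\prod_{K\in Y}Ann_R(soc(K))\subseteq Ann_R(H(Y))$ and the latter is prime, some factor $Ann_R(soc(K_0))$ is contained in $Ann_R(H(Y))$; together with the reverse inclusion supplied by the intersection this forces $Ann_R(soc(K_0))=Ann_R(H(Y))$ for some $K_0\in Y$, whence $\sqrt{Ann_R(K_0)}=\sqrt{Ann_R(H(Y))}$. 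Combining Lemma \ref{Lemma 2.4}(3), Theorem \ref{theorem 4.2} and Proposition \ref{proposition 4.1} then yields $Cl(\{K_0\})=\nu^s(K_0)=\nu^s(H(Y))=Y$, so $K_0$ is a generic point of $Y$. With (a)--(d) all verified, Hochster's criterion shows $Spec^L(M)$ is spectral.

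The main obstacle is exactly this last reduction of the prime $Ann_R(H(Y))$ to a single $Ann_R(soc(K_0))$. Finiteness of $Spec^L(M)$ is what makes the defining intersection finite and hence lets the prime absorb one of its factors; without it there need be no point $K_0$ whose closure recovers all of $Y$, and condition (c) could fail. This is precisely the role previously played by the surjectivity of $\varphi$ (cf.\ Theorem \ref{theorem 4.5}), and the point of the proposition is that finiteness is an adequate substitute.
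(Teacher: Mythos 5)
Your proposal is correct, and its overall skeleton coincides with the paper's: both directions pass through Hochster's criterion, with Theorem \ref{theorem 4.9} converting the $T_0$ property into $|Spec^L_p(M)|\leq 1$, and with conditions (b) and (d) coming for free from finiteness. Where you genuinely diverge is in condition (c). The paper handles it with a purely topological one-liner: a finite closed set $F=\{a_1,\dots,a_n\}$ satisfies $F=Cl(\{a_1\})\cup\dots\cup Cl(\{a_n\})$, so irreducibility forces $F=Cl(\{a_i\})$ for some $i$ — no module theory or primality is used, and the argument shows that generic points exist in \emph{any} finite space, independently of the $T_0$ hypothesis. You instead take the algebraic route: Theorem \ref{theorem 4.4}(2) makes $Ann_R(H(Y))$ prime, finiteness of $Y$ makes $Ann_R(H(Y))=\bigcap_{K\in Y}Ann_R(soc(K))$ a finite intersection of the primes $\sqrt{Ann_R(K)}$, and the standard absorption argument (the product of the factors lies in the prime intersection, so some factor does, and the reverse inclusion is automatic) extracts $K_0\in Y$ with $Ann_R(soc(K_0))=Ann_R(H(Y))$; since both ideals are radical, Lemma \ref{Lemma 2.4}(3), Theorem \ref{theorem 4.2} and Proposition \ref{proposition 4.1} give $Cl(\{K_0\})=\nu^s(K_0)=\nu^s(H(Y))=Cl(Y)=Y$. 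Every step of this checks out. Your version is heavier than the paper's but buys two things: it exhibits the generic point concretely rather than abstractly, and it makes transparent that finiteness is playing exactly the role that surjectivity of $\varphi$ plays in Theorem \ref{theorem 4.5} — in both cases one realizes the prime $Ann_R(H(Y))$ as $Ann_R(soc(K_0))$ for an actual point $K_0$, by surjectivity there and by prime absorption here. The paper's argument, conversely, is shorter and more general as pure topology; your closing remark about finiteness substituting for surjectivity is accurate and is the conceptual content the paper leaves implicit.
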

\begin{proof}
Since $Spec^L(M)$ is finite, then (b) and (d) of Hochester's characterization of spectral spaces are satisfied. Now, let $F=\{a_1, a_2, ..., a_n\}$ be an irreducible closed subset of $Spec^L(M)$. Since $F$ is closed, then $F=Cl(\{a_1\}\cup\{a_2\}\cup ...\cup\{a_n\})=Cl(\{a_1\})\cup Cl(\{a_2\})\cup...\cup Cl(\{a_n\})$. Since $F$ is irreducible, then $F=Cl(\{a_i\})$ for some $i\in\{1, ..., k\}$. So every irreducible closed subset of $Spec^L(M)$ has a generic point, i.e. condition (c) of Hochester's characterization is also satisfied. Now, $Spec^L(M)$ is a spectral space if and only if $Spec^L(M)$ is a $T_0$-space if and only if $|Spec_p^L(M)|\leq 1$ for any $p\in Spec(R)$ by Theorem \ref{theorem 4.9}.
\end{proof}
Recall that a topological space $Z$ is said to be a $T_1$-space if every singleton subset of $Z$ is closed. Now, we need the next two lemmas to investigate $Spec^L(M)$ with the Zariski topology from the viewpoint of being a $T_1$-space.
\begin{lemma}\label{Lemma 4.14}
Let $M$ be an $R$-module such that every secondary submodule of $M$ contains a minimal submodule and let $K\in Spec^L(M)$. Then the set $\{K\}$ is closed in $Spec^L(M)$ if and only if $K$ is a minimal submodule of $M$ and $Spec_p^L(M)=\{K\}$, where $p=\sqrt{Ann_R(K)}$.
\end{lemma}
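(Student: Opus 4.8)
The plan is to first reduce the topological statement to an algebraic one. By Theorem \ref{theorem 4.2}, $\{K\}$ is closed exactly when $Cl(\{K\})=\nu^s(K)=\{K\}$. Writing $p=\sqrt{Ann_R(K)}$ and recalling that for every $K'\in Spec^L(M)$ the ideal $\sqrt{Ann_R(K')}$ is prime (hence radical), the defining condition $Ann_R(K)\subseteq\sqrt{Ann_R(K')}$ of $\nu^s(K)$ is equivalent to $p\subseteq\sqrt{Ann_R(K')}$. Thus $\nu^s(K)=\{K'\in Spec^L(M)\,\mid\, p\subseteq\sqrt{Ann_R(K')}\}$, and the whole lemma becomes a statement about which primes $\sqrt{Ann_R(K')}$ can contain $p$.

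For the forward implication, assume $\{K\}$ is closed, i.e. $\nu^s(K)=\{K\}$. The uniqueness clause is immediate: if $K'\in Spec_p^L(M)$ then $\sqrt{Ann_R(K')}=p\supseteq p$, so $K'\in\nu^s(K)=\{K\}$; since $K\in Spec_p^L(M)$ trivially, $Spec_p^L(M)=\{K\}$. For minimality I would invoke the standing hypothesis: $K$ is secondary, so it contains a minimal submodule $L$. A minimal submodule is simple, hence second, so $L\in Spec^s(M)\subseteq Spec^L(M)$; and from $L\subseteq K$ we get $Ann_R(K)\subseteq Ann_R(L)\subseteq\sqrt{Ann_R(L)}$, so $L\in\nu^s(K)=\{K\}$. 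Therefore $L=K$, and $K$ is minimal.

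For the reverse implication, assume $K$ is minimal and $Spec_p^L(M)=\{K\}$. The key algebraic upgrade is that a minimal (i.e.\ simple) submodule satisfies $K\cong R/Ann_R(K)$, so $Ann_R(K)$ is a \emph{maximal} ideal; since $soc(K)=K$ in this case, this maximal ideal is exactly $p=\sqrt{Ann_R(K)}$. Now take any $K'\in\nu^s(K)$, so $p\subseteq\sqrt{Ann_R(K')}$ with $\sqrt{Ann_R(K')}$ prime; maximality of $p$ forces $\sqrt{Ann_R(K')}=p$, whence $K'\in Spec_p^L(M)=\{K\}$ and $K'=K$. Thus $\nu^s(K)=\{K\}$ and $\{K\}$ is closed.

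The routine radical and annihilator manipulations are harmless; the two genuine pivots are the hypothesis in the forward direction (which alone guarantees that the secondary submodule $K$ is forced down to a simple submodule it contains) and, in the reverse direction, the observation that minimality promotes $p$ from a mere prime to a maximal ideal, killing every strictly larger prime that could otherwise populate $\nu^s(K)$. I expect the only point needing care is making explicit that a minimal submodule is second with maximal annihilator, so that it legitimately lies in $Spec^L(M)$ and pins $p$ to the top of $Spec(R)$.
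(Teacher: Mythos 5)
Your proposal is correct and follows essentially the same route as the paper: the forward direction uses the standing hypothesis to find a minimal (hence second, hence in $Spec^L(M)$) submodule inside $K$ that must equal $K$ since $\nu^s(K)=\{K\}$, and the reverse direction uses that minimality makes $Ann_R(K)$ maximal, forcing $\sqrt{Ann_R(B)}=p$ for any $B\in Cl(\{K\})=\nu^s(K)$ and then invoking $Spec_p^L(M)=\{K\}$. The only cosmetic difference is that you verify directly that a minimal submodule is second (via simplicity), where the paper cites \cite[Proposition 1.6]{yassemi2001dual}.
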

\begin{proof}$\Rightarrow$: Suppose that $\{K\}$ is closed in $Spec^L(M)$. Then $Cl(\{K\})=\{K\}$ and thus $\nu^s(K)=\{K\}$. By hypothesis, there exists a minimal submodule $T$ of $M$ such that $T\subseteq K$. Hence $\sqrt{Ann_R(K)}\subseteq\sqrt{Ann_R(T)}$. Since $T$ is a minimal submodule of $M$, then $T\in Spec^s(M)\subseteq Spec^L(M)$ by \cite[Proposition 1.6]{yassemi2001dual}. Thus $T\in\nu^s(K)=\{K\}$. This implies that $T=K$ and thus $K$ is a minimal submodule of $M$. Now, if $K^\prime\in Spec_p^L(M)$, then $\sqrt{Ann_R(K^\prime)}=\sqrt{Ann_R(K)}=p$ and hence $K^\prime\in\nu^s(K)=\{K\}$. This follows that $Spec_p^L(M)\subseteq\{K\}$. Consequently, $Spec^L_p(M)=\{K\}$.\\
$\Leftarrow$: Suppose that $B\in Cl(\{K\})$. Then $B\in\nu^s(K)$ and thus $Ann_R(K)\subseteq \sqrt{Ann_R(B)}$. Since $K$ is a minimal submodule of $M$, then $Ann_R(K)$ is a maximal ideal of $R$ which follows that $Ann_R(K)=\sqrt{Ann_R(B)}$. So $p=\sqrt{Ann_R(K)}=\sqrt{Ann_R(B)}$. But $Spec^L_p(M)=\{K\}$. Thus $B=K$ and hence $Cl(\{K\})\subseteq K$. Therefore $\{K\}$ is closed in $Spec^L(M)$, as desired.
\end{proof}
\begin{lemma}\label{lemma 4.15}
Let $M$ be an $R$-module and $p\in Spec(R)$. If $K_1, K_2\in Spec_p^L(M)$, then $K_1+K_2\in Spec_p^L(M)$.
\end{lemma}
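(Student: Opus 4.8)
The plan is to verify the two defining conditions for membership in $Spec^L_p(M)$: that $K_1+K_2$ is a secondary submodule of $M$, and that $\sqrt{Ann_R(K_1+K_2)}=p$ together with $Ann_R(soc(K_1+K_2))=\sqrt{Ann_R(K_1+K_2)}$. The key structural fact I would exploit is that $K_1,K_2\in Spec^L_p(M)$ means $\sqrt{Ann_R(K_1)}=\sqrt{Ann_R(K_2)}=p$, so both submodules have the same associated prime, and intuitively their sum should be secondary with radical annihilator again equal to $p$.

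First I would show $K_1+K_2$ is secondary. Since $K_1\neq\{0\}$, clearly $K_1+K_2\neq\{0\}$. Fix $r\in R$. Because each $K_i$ is secondary with $\sqrt{Ann_R(K_i)}=p$, the dichotomy is governed by whether $r\in p$: if $r\notin p$ then $r\notin\sqrt{Ann_R(K_i)}$, which forces $rK_i=K_i$ for each $i$ (as the secondary alternative $r^nK_i=\{0\}$ would put $r$ in $\sqrt{Ann_R(K_i)}=p$), hence $r(K_1+K_2)=rK_1+rK_2=K_1+K_2$. If instead $r\in p=\sqrt{Ann_R(K_i)}$, then there exist positive integers $n_1,n_2$ with $r^{n_i}K_i=\{0\}$; taking $n=\max\{n_1,n_2\}$ gives $r^n(K_1+K_2)=\{0\}$. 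Thus for every $r\in R$ we have $r(K_1+K_2)=K_1+K_2$ or $r^n(K_1+K_2)=\{0\}$, so $K_1+K_2$ is secondary. This same case analysis also identifies $\sqrt{Ann_R(K_1+K_2)}=p$: an element $r$ lies in $\sqrt{Ann_R(K_1+K_2)}$ exactly when $r^n(K_1+K_2)=\{0\}$ for some $n$, which by the above happens precisely when $r\in p$.

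It remains to check the secondary-like condition $Ann_R(soc(K_1+K_2))=\sqrt{Ann_R(K_1+K_2)}=p$. Here I would use $soc(K_1)+soc(K_2)\subseteq soc(K_1+K_2)$, which gives $Ann_R(soc(K_1+K_2))\subseteq Ann_R(soc(K_1))\cap Ann_R(soc(K_2))=p\cap p=p$, using that each $K_i\in Spec^L(M)$ forces $Ann_R(soc(K_i))=\sqrt{Ann_R(K_i)}=p$. For the reverse inclusion, note that in general $soc(N)\subseteq Ann_M(Ann_R(soc(N)))$ and, since $Ann_R(soc(K_1+K_2))$ contains $Ann_R(K_1+K_2)$, one gets $\sqrt{Ann_R(K_1+K_2)}\subseteq Ann_R(soc(K_1+K_2))$ by the standard relation (already invoked in Theorem 4.4 via \cite[Proposition 2.1(h)]{ansari2013dual}) $soc(L)\subseteq Ann_M(\sqrt{Ann_R(L)})$ applied to $L=K_1+K_2$, whence $\sqrt{Ann_R(K_1+K_2)}\subseteq Ann_R(Ann_M(\sqrt{Ann_R(K_1+K_2)}))\subseteq Ann_R(soc(K_1+K_2))$. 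Combining both inclusions yields $Ann_R(soc(K_1+K_2))=p=\sqrt{Ann_R(K_1+K_2)}$, so $K_1+K_2\in Spec^L(M)$ and, by the radical computation above, $K_1+K_2\in Spec^L_p(M)$.

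The main obstacle I anticipate is the secondary-like equality rather than the secondary property itself: verifying $Ann_R(soc(K_1+K_2))=p$ requires care because $soc$ of a sum can in principle be strictly larger than the sum of the socles, so the containment $p\subseteq Ann_R(soc(K_1+K_2))$ is the delicate direction and is precisely where the general inequality $soc(L)\subseteq Ann_M(\sqrt{Ann_R(L)})$ must be brought in. The secondary dichotomy, by contrast, reduces cleanly to membership in the common prime $p$ and should be routine.
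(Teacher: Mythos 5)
Your proof is correct and takes essentially the same route as the paper's: both verify the secondary dichotomy by reducing to membership in the common prime $p$, and both prove $Ann_R(soc(K_1+K_2))=\sqrt{Ann_R(K_1+K_2)}=p$ by combining socle monotonicity ($soc(K_1)\subseteq soc(K_1+K_2)$) with the containment $soc(K_1+K_2)\subseteq Ann_M(\sqrt{Ann_R(K_1+K_2)})$ from \cite[Proposition 2.1(h)]{ansari2013dual}. The only cosmetic difference is that the paper computes the radical directly via $\sqrt{Ann_R(K_1)\cap Ann_R(K_2)}=\sqrt{Ann_R(K_1)}\cap\sqrt{Ann_R(K_2)}$, whereas you extract it from the secondary case analysis; both are valid.
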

\begin{proof}
Firstly, note that $\sqrt{Ann_R(K_1+K_2)}=\sqrt{Ann_R(K_1)\cap Ann_R(K_2)}=\sqrt{Ann_R(K_1)}\cap\sqrt{Ann_R(K_2)}=p$. Now, we show that $K_1+K_2$ is a secondary submodule of $M$. So let $r\in R$ and suppose that $r\notin\sqrt{Ann_R(K_1+K_2)}=\sqrt{Ann_R(K_1)}=\sqrt{Ann_R(K_2)}=p$. Since $K_1$ and $K_2$ are secondary submodules of $M$, then $rK_1=K_1$ and $rK_2=K_2$. This follows that $r(K_1+K_2)=K_1+K_2$. Now, it remains to prove that $Ann_R(soc(K_1+K_2))=\sqrt{Ann_R(K_1+K_2)}$. By \cite[Proposition 2.1(h)]{ansari2013dual}, $soc(K_1+K_2)\subseteq Ann_M(\sqrt{Ann_R(K_1+K_2)})$ and hence $p=\sqrt{Ann_R(K_1+K_2)}\subseteq Ann_R(Ann_M(\sqrt{Ann_R(K_1+K_2)}))\subseteq Ann_R(soc(K_1+K_2))$. Since $K_1\subseteq K_1+K_2$, then $soc(K_1)\subseteq soc(K_1+K_2)$ and hence $Ann_R(soc(K_1+K_2))\subseteq Ann_R(soc(K_1))=p=\sqrt{Ann_R(K_1+K_2)}$, as needed.
\end{proof}
\begin{corollary}
Let $M$ be an $R$-module such that every secondary submodule of $M$ contains a minimal submodule. Then $Spec^L(M)$ is a $T_1$-space if and only if $Min(M)=Spec^L(M)$, where $Min(M)$ is the set of all minimal submodules of $M$.
\end{corollary}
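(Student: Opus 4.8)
The plan is to reduce the statement to the two lemmas just established, since $Spec^L(M)$ is a $T_1$-space precisely when every singleton $\{K\}$ with $K\in Spec^L(M)$ is closed. Lemma \ref{Lemma 4.14} already characterizes, under the standing hypothesis, exactly when such a singleton is closed, so the whole argument becomes a matter of matching that characterization against the condition $Min(M)=Spec^L(M)$.

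For the forward direction I would assume $Spec^L(M)$ is a $T_1$-space. Then for each $K\in Spec^L(M)$ the set $\{K\}$ is closed, so Lemma \ref{Lemma 4.14} forces $K$ to be a minimal submodule; hence $Spec^L(M)\subseteq Min(M)$. The reverse inclusion $Min(M)\subseteq Spec^L(M)$ holds unconditionally, since every minimal submodule is second by \cite[Proposition 1.6]{yassemi2001dual} and $Spec^s(M)\subseteq Spec^L(M)$. Combining the two inclusions yields $Min(M)=Spec^L(M)$.

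For the converse I would assume $Min(M)=Spec^L(M)$ and fix an arbitrary $K\in Spec^L(M)$, with the goal of showing $\{K\}$ is closed via Lemma \ref{Lemma 4.14}. Since $K\in Min(M)$, the module $K$ is minimal, so it remains only to verify that $Spec^L_p(M)=\{K\}$, where $p=\sqrt{Ann_R(K)}$. The key step is to take an arbitrary $K^\prime\in Spec^L_p(M)$ and apply Lemma \ref{lemma 4.15} to conclude $K+K^\prime\in Spec^L_p(M)=Min(M)$, so that $K+K^\prime$ is itself a minimal submodule. Because $K$ is a nonzero submodule contained in the minimal module $K+K^\prime$, minimality gives $K=K+K^\prime$, hence $K^\prime\subseteq K$; and as $K^\prime$ is also nonzero and minimal, $K^\prime=K$. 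Therefore $Spec^L_p(M)=\{K\}$, and Lemma \ref{Lemma 4.14} shows $\{K\}$ is closed, so $Spec^L(M)$ is $T_1$.

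The main obstacle is precisely this verification in the converse direction that $Spec^L_p(M)$ is a singleton; it is the only place where genuine content enters, and it requires using the hypothesis that every secondary submodule contains a minimal submodule (channeled through Lemma \ref{Lemma 4.14}) together with the additive closure of $Spec^L_p(M)$ recorded in Lemma \ref{lemma 4.15}. By contrast, the forward direction and the unconditional inclusion $Min(M)\subseteq Spec^L(M)$ are routine applications of the earlier results.
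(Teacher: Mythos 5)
Your proposal is correct and takes essentially the same route as the paper's own proof: both directions reduce to Lemma \ref{Lemma 4.14}, with the forward inclusion $Min(M)\subseteq Spec^L(M)$ via \cite[Proposition 1.6]{yassemi2001dual} and the converse using Lemma \ref{lemma 4.15} to show $Spec^L_p(M)=\{K\}$ from minimality of $K+K^\prime$. The only cosmetic difference is that the paper concludes $H=H+K=K$ directly from both submodules sitting inside the minimal sum, whereas you first extract $K^\prime\subseteq K$ and then invoke minimality once more.
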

\begin{proof}
$\Rightarrow$: Suppose that $Spec^L(M)$ is a $T_1$-space. By \cite[Proposition 1.6]{yassemi2001dual}, $Min(M)\subseteq Spec^s(M)\subseteq Spec^L(M)$. Now, let $K\in Spec^L(M)$. Since $Spec^L(M)$ is a $T_1$-space, then $\{K\}$ is closed. By Lemma \ref{Lemma 4.14}, we obtain $K\in Min(M)$.\\
$\Leftarrow$: Suppose that $Min(M)=Spec^L(M)$ and let $K\in Spec^L(M)$. It is enough to show that $\{K\}$ is closed in $Spec^L(M)$. Let $p=\sqrt{Ann_R(K)}$ and $H\in Spec_p^L(M)$. By Lemma \ref{lemma 4.15}, $K+H\in Spec_p^L(M)\subseteq Min(M)$ and thus $K+H\in Min(M)$. But $H\subseteq H+K$ and $W\subseteq H+K$. So $H=H+K=K$ and so $Spec_p^L(M)=\{K\}$. By Lemma \ref{Lemma 4.14}, we have $\{K\}$ is closed in $Spec^L(M)$.
\end{proof}


%
\noindent\textbf{Declarations}


\noindent\textbf{Funding statement:}\\ This research did not receive any specific grant from funding agencies in the public, commercial, or not-for-profit sectors.

\smallskip

\noindent\textbf{Data availability statement:} \\No data was used for the research described in the article
\smallskip

\noindent\textbf{Declaration of interests statement: }\\The authors declare no conflict of interest
\smallskip

\noindent\textbf{Additional information: } \\No additional information is available for this paper.
\smallskip

\noindent\textbf{Acknowledgments:} \\The authors wish to thank sincerely the referees for
their valuable comments and suggestions.
\smallskip


\bigskip\bigskip\bigskip\bigskip

\end{document}